\newtheorem{Theo}{Theorem}[section]
\newcommand{\Z}{\mathbbm{Z}}
\title{On the minimum number of Fox colorings of knots}
\author{H. Abchir\\ {\scriptsize Hassan II University. 
		Casablanca, Morocco.}\\\scriptsize{e-mail: hamid.abchir@univh2c.ma}\\M. Elhamdadi\\\scriptsize{University of South Florida, Tampa FL.}\\\scriptsize{e-mail: emohamed@usf.edu}\\S. Lamsifer\\\scriptsize{Hassan II University. 
		Casablanca, Morocco.}\\\scriptsize{e-mail: soukainalamsifer@gmail.com}}
\begin{document}
\maketitle


\begin{abstract} 
	 We investigate Fox colorings of knots that are $17$-colorable.  Precisely, we prove that any $17$-colorable knot has a diagram such that exactly $6$ among the seventeen colors are assigned to the arcs of the diagram.
\end{abstract}
\section{Introduction}
Knot theory has been a very active area of research for more than a century.  In the last fifty years, Knot theory has been successfully applied to chemistry and molecular biology.
Recently some algebraic structures, related to quandles which generalize Fox $n$-colorability, have been used to classify topological structures of proteins were introduced in \cite{ADEM} and labelled bondles.  The $3$-coloring invariant is the simplest invariant that distinguishes the trefoil knot from the trivial knot.  The idea of $3$-coloring and more generally $m$-coloring was developed by R. Fox around 1960 (see \cite{fox1962quick}).  He introduced a diagrammatic definition of colorability of a knot by $\Z_m$ (the integers modulo $m$). Precisely, for any natural number $m$, a knot diagram is said to be 
$m$-colorable if we can assign to each of its arcs an element of $\Z_m$, called the color of that arc, such that, at each crossing, the sum of the colors of the under-arcs is twice the color assigned to the over arc modulo $m$ (see Figure~\ref{Fig.0} below). 
A knot is said to be $m$-colorable if it has an $m$-colorable diagram. For obvious reasons $m$ will be restricted to the odd primes.  A coloring that uses only one color is usually called a trivial coloring.  For an explicit example of a Fox $3$-coloring of the knot $8_{19}$ consult example 60 on page 82 of \cite{EN}. 

Let $p$ be an odd prime integer. Let $K$ be a $p$-colorable knot and let $C_p(K)$ denote the minimal number of colors needed to color a diagram of $K$. 
The problem of finding the minimum number of colors for $p$-colorable knots with primes up to 13 was investigated by many authors.  In 2009, S. Satoh showed in \cite{satoh20095} that 
 $C_5(K)=4$. In 2010, K. Oshiro proved that $C_7(K)=4$ \cite{oshiro2010any}. In 2016, T. Nakamura, Y. Nakanishi and S. Satoh showed in \cite{satoh2016} that $C_{11}(K)=5$. In 2017, M. Elhamdadi and J. Kerr \cite{elhamdadi2016fox} and independently F. Bento and P. Lopes \cite{bento2017minimum} proved that $C_{13}(K)=5$.  In what follows we investigate the case of the prime number $p=17$. 

\newcolumntype{D}[1]{>{\centering}p{#1}}
\begin{center}
\begin{tabular}{|D{3cm}|D{3cm}|}\hline
p & $C_p(K)$  \tabularnewline\hline
3 & 3 \tabularnewline\hline
5 & 4 \tabularnewline\hline
7 & 4 \tabularnewline\hline
11 & 5\tabularnewline\hline
13 & 5 \tabularnewline\hline
\end{tabular}
\end{center}

First, using the result of T. Nakamura, Y. Nakanishi and S. Satoh \cite{satoh2013} which states that for any knot $K$ and any prime $p$, $C_p(K)\ge\lfloor\log_2 p\rfloor +2$, we obtain that $C_{17}(K)\ge 6$. The main result of this article is to show that $C_{17}(K)= 6$.  It is still an open question whether for any $p$-colorable knot $K$, the equality $C_p(K)  =   \lfloor\log_2 p\rfloor +2$ holds.  In addition to the results already known, regarding the problem of finding the minimum number of colors for $p$-colorable knots, the result of this article reinforces the validity that equality holds.
\section{Any $17$-colorable knot can be colored by six colors}
\noindent
Through this article we will adopt the same notations as in \cite{elhamdadi2016fox}. So we will use $\{a|b|c\}$ to denote a crossing, as in Figure \ref{Fig.0} where $b$ is the color of the over-arc and $a$ and $c$ are the colors of the under-arcs with $a+c= 2b$ modulo $17$. When the crossing is of the type $\{a|a|a\}$ (trivial coloring), we will omit over and under-arcs and draw them crossing each other.

\begin{figure}[H]
\begin{center}
	\includegraphics[scale=0.17]{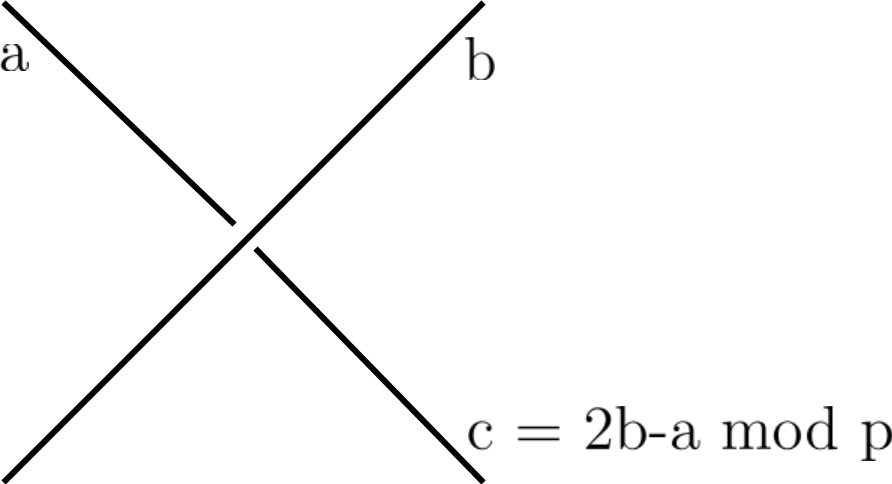}
	\caption{The coloring $\{a|b|c\}$.\label{Fig.0}}
        \end{center}
\end{figure}
Our main result is the following      
\begin{Theo}\label{MainThm}
Any 17-colorable knot has a 17-colored diagram with exactly six colors.
\end{Theo}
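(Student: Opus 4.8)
\emph{Proof strategy.} The inequality $C_{17}(K)\ge 6$ is already in hand from the Nakamura--Nakanishi--Satoh bound $C_p(K)\ge\lfloor\log_2 p\rfloor+2$ quoted above (for $p=17$ this reads $C_{17}(K)\ge 4+2=6$). Hence it suffices to exhibit, for every $17$-colorable knot $K$, a $17$-colored diagram using \emph{at most} six colors: together with the lower bound this yields $C_{17}(K)=6$, and then, since $C_{17}(K)$ is by definition the least number of colors occurring in a $17$-colored diagram of $K$, such a diagram uses exactly six. The plan is to take an arbitrary nontrivially $17$-colored diagram $(D,\rho)$ of $K$ and to transform it, through a finite sequence of Reidemeister moves each carrying the coloring along --- so that the knot type and the $17$-colorability persist at every stage --- into a diagram whose coloring takes values in a fixed six-element palette $S\subseteq\Z_{17}$. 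As a first step I would use the affine action $x\mapsto\alpha x+\beta$ with $\alpha\in\Z_{17}\setminus\{0\}$ and $\beta\in\Z_{17}$ on colorings, which changes neither the underlying diagram nor the number of colors it uses, to normalize $\rho$ so that two prescribed colors (say $0$ and $1$) occur; this trims the list of colored configurations to be analyzed later.

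The basic tool is the colored Reidemeister~II move: pushing an arc colored $i$ under an arc colored $j$ replaces a segment of the $i$-colored arc by two crossings $\{i\,|\,j\,|\,2j-i\}$ and $\{2j-i\,|\,j\,|\,i\}$ together with one short new arc colored $2j-i$; read backwards it removes such a short arc, and composing several of these with colored Reidemeister~III moves --- which merely shuffle colors around a triple point and introduce none --- yields a repertoire of local \emph{colored tangle replacements}. With this repertoire I would eliminate the colors lying outside $S$ one at a time: fixing a forbidden color $c\notin S$ that is still present, and wherever a $c$-colored arc enters a crossing, splicing in a tangle with the same boundary colors whose interior arcs are colored only by $S$ and which carries strictly fewer $c$-colored strands. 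Iterating removes every $c$-colored arc, and since the inserted tangles use only colors of $S$ no earlier-removed color is ever reintroduced, so passing in turn through the finitely many forbidden colors leaves a coloring valued in $S$. Termination is witnessed by a monovariant such as the pair (number of forbidden colors present, number of forbidden-colored arcs), ordered lexicographically, which each replacement strictly decreases.

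The heart of the matter --- and the step I expect to be the real obstacle --- is twofold. First, $S$ must be chosen so that the required tangles actually exist: for \emph{every} colored crossing $\{a\,|\,b\,|\,c\}$ with $a+c\equiv 2b\pmod{17}$ in which some color is forbidden, one must produce a concrete $S$-colored tangle with boundary $(a,b,c)$ and strictly fewer forbidden strands, and this has to be verified across all the relevant residues modulo $17$ --- a finite but delicate case analysis in which a poor choice of $S$ leaves an irreducible configuration. Second, one must check that the move list is \emph{complete}, i.e.\ handles every way a forbidden color can be incident to the diagram, and that each move on it is realized by a bona fide finite sequence of Reidemeister moves on the colored diagram. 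Everything else --- the affine normalization, the bookkeeping of the monovariant, and the explicit pictures of the replacement tangles --- should be routine once $S$ and the repertoire of moves have been pinned down.
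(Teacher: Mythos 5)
Your overall strategy coincides with the paper's: reduce to showing that some six-element palette suffices, then eliminate the forbidden colors one at a time by local colored Reidemeister-type replacements, with the lower bound $C_{17}(K)\ge\lfloor\log_2 17\rfloor+2=6$ upgrading ``at most six'' to ``exactly six.'' But what you have written is a plan, not a proof, and you have yourself located the gap precisely: the entire mathematical content of the theorem lives in the part you defer, namely choosing the palette $S$, choosing the \emph{order} in which the eleven forbidden colors are removed, and verifying exhaustively that every local configuration involving the current forbidden color admits a replacement that introduces none of the previously eliminated colors. The paper's proof consists almost entirely of this verification: it fixes $S=\{0,2,3,4,8,12\}$, removes the colors in the specific order $16,15,9,10,6,7,5,1,11,14,13$, splits each step into the three incidence types you also identify (a monochromatic crossing $\{c|c|c\}$, $c$ on an over-arc, $c$ on an under-arc), and then handles dozens of exceptional residue configurations --- tabulated by pairs $(c,c_k)$ and triples $(c,c_k,c_l)$ --- each with its own explicit deformation. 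Without exhibiting $S$, the order, and these case-by-case moves, no theorem has been proved; a poor choice of palette or order genuinely does produce irreducible configurations, which is exactly why the paper's tables are so intricate.

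Two further points. First, your invariant condition is stronger than what is needed and probably stronger than what is achievable: you ask that each spliced-in tangle have interior arcs ``colored only by $S$,'' but midway through the process the boundary arcs themselves may carry colors outside $S$ that have not yet been forbidden, and the correct inductive requirement (the one the paper uses) is only that the new arcs avoid $c$ and the already-eliminated colors $c_1,\dots,c_{i-1}$. Insisting on $S$-valued interiors from the start would make many of the required tangles nonexistent. Second, your affine normalization $x\mapsto\alpha x+\beta$ is sound but buys essentially nothing here: it can pin down two occurring colors, but the case analysis ranges over all crossings of the diagram and all residues mod $17$ regardless, which is why the paper does not use it.
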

\begin{proof}
  Let $D$ be a non-trivially 17-colored knot diagram of a knot $K$. We will show that the integers $\{0,2,3,4,8,12\}$ are enough to color $K$. To do this, we will proceed by steps. At the step number $i$ we will prove that one can do without the color $c_i$, which is the $i$-th number in the ordered list $\lbrace 16, 15, 9, 10, 6, 7, 5, 1, 11, 14, 13\rbrace$.\\ We will start by proving that we can modify $D$ to get an equivalent colored diagram $D_1$ where the color $c_1=16$ is not used. The step $i$, $i\ge 2$, consists in showing that if one begins with a colored diagram $D_{i-1}$ in which none of the already discarded colors $\{c_1,\dots ,c_{i-1}\}$ is used, then one can modify $D_{i-1}$ to get a new equivalent colored diagram $D_i$ where none of the colors $\{c_1,\dots ,c_i\}$ appear.
    Note that any color $c$ can occur in $D$ in three ways:
    \begin{itemize}
    \item at a crossing of the form $\{c|c|c\}$,
    \item or on an over-arc at a crossing $\{a|c|2c-a\}$ for some color $a$
    \item or as the color of an under-arc that connects two crossings of the type $\{2a-c|a|c\}$ and $\{c|b|2b-c\}$ for some colors $a$ and $b$.
\end{itemize}
Then at each step, we will show that in each one of these three cases, one can modify the diagram such that the color $c$ will be eliminated.\\
In all the figures we will use, we denote by $c$ the color we want to drop.  To make things clear, we start by dealing with the first step when $c=16$. We will show that there is a non-trivially equivalent $17$-colored diagram $D_1$ with no arc colored by $16$.\\
\textbf{Case 1}: Assume that $D$ has a crossing of type $\{16|16|16\}$. Then $D$ will necessarily have one of the two crossings, $\{2a+1|a|16\}$ or $\{a|16|15-a\}$ for some $a\ne 16$. Since $2a+1\ne 16$ and $15-a\ne 16$, we deform the arc colored by $a$ as shown in Figure \ref{Fig.104} in the case of the first crossing, or as shown in Figure \ref{Fig.105} in the case of the second crossing. Each of those two deformations provides an equivalent diagram where the crossing $\{16|16|16\}$ disappeared.
\begin{figure}[H]
\centering
	\includegraphics[scale=0.132]{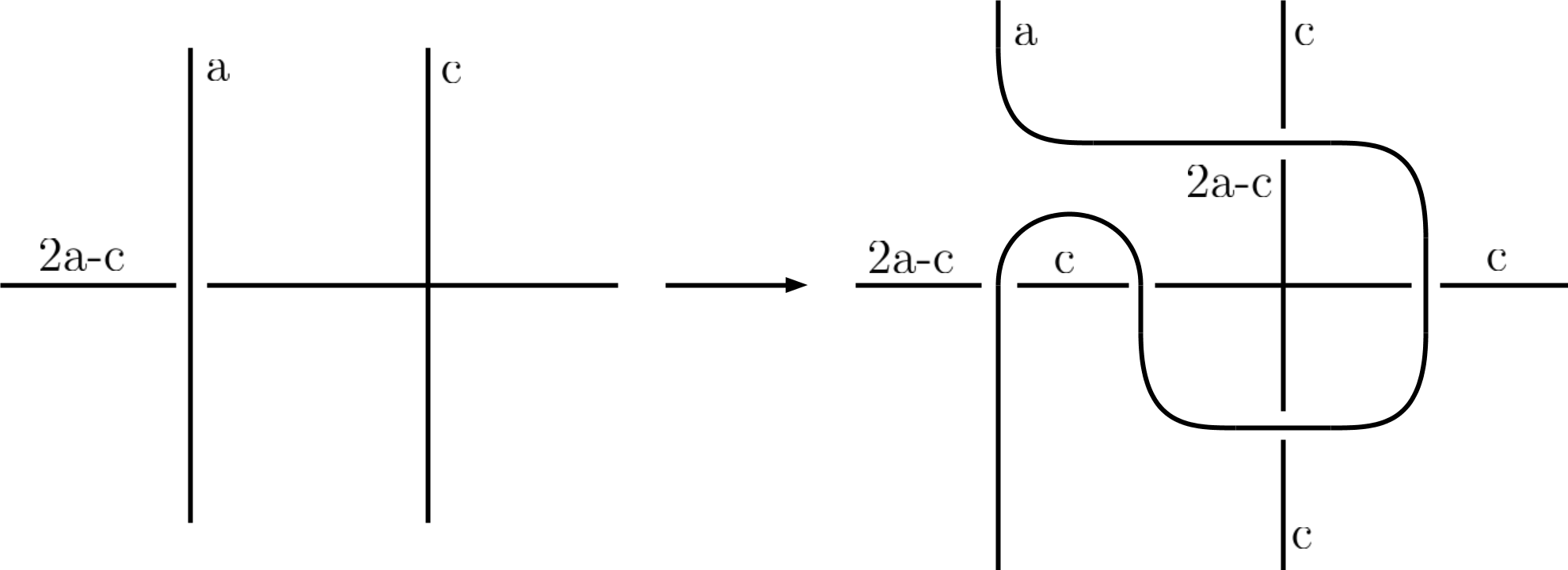} 
	\caption{Transformation of $\left\{c\vert c\vert c \right\}$ when $a$ is the color of an over-arc.\label{Fig.104}}
\end{figure}
In the case of the second crossing, we do the deformation described in Figure \ref{Fig.105}.
\begin{figure}[H]
\centering
	\includegraphics[scale=0.132]{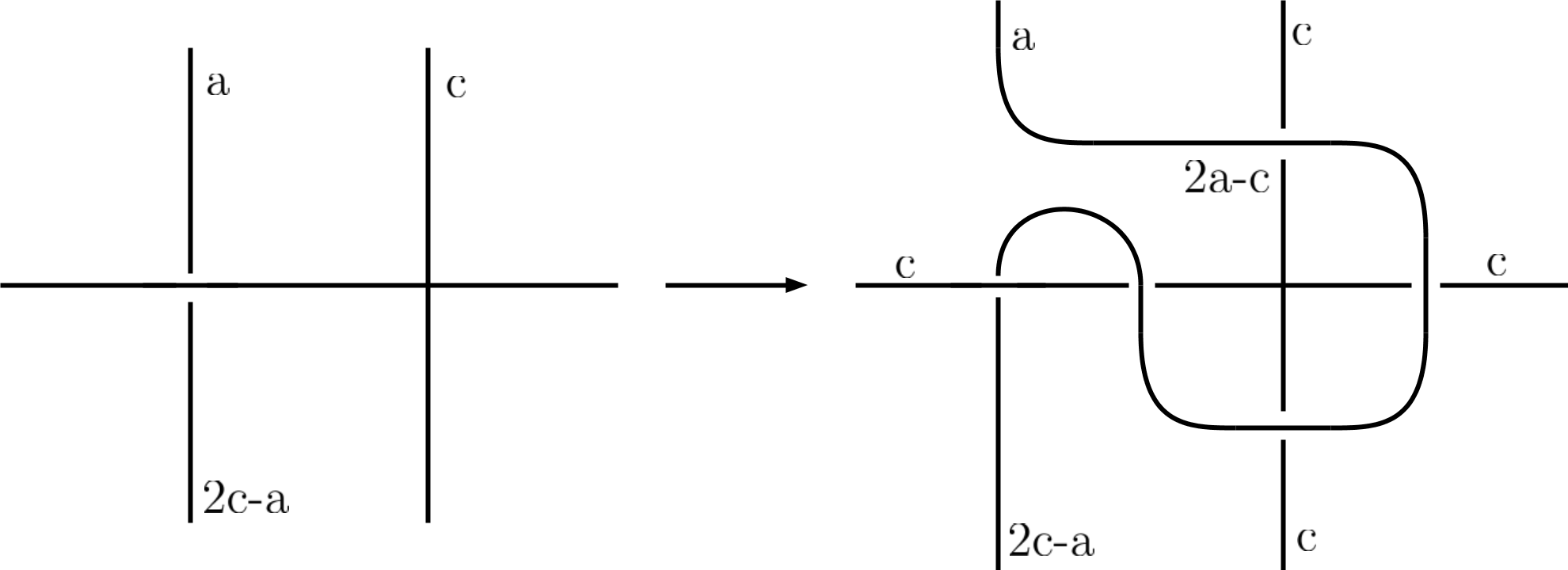} 
	\caption{Transformation of the crossing $\left\{c\vert c\vert c \right\}$ when $a$ is the color of an under-arc.\label{Fig.105}}
\end{figure}
\textbf{Case 2}: Assume that $D$ has a crossing whose over-arc has the color $16$, i.e. it is of the type $\{a|16|15-a\}$ for some $a\ne 16$. Then we deform $D$ as shown in Figure \ref{Fig.108}. We easily check that the generated colors $2a+1$ and $3a+2$ are both distinct from $16$. Furthermore there is no more over-arc with color $16$ in the region concerned by the modification.
\begin{figure}[H]
\centering
	\includegraphics[scale=0.131]{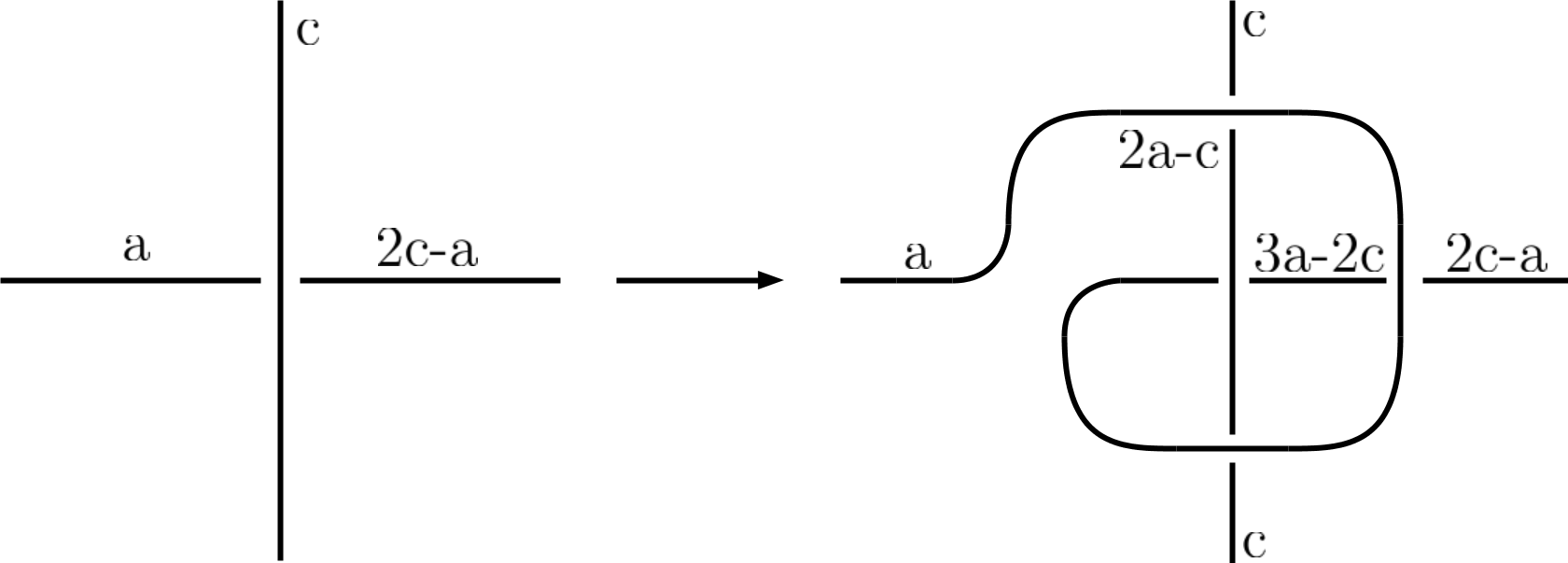} 
	\caption{\label{Fig.108}}
\end{figure}
\textbf{Case 3}: Assume that $D$ has a crossing whose under arc is colored by $16$. Then this under-arc will connect a crossing of the type $\{2a+1|a|16\}$ to a crossing of type $\{16|b|2b+1\}$ for some $a$ and $b$ distinct from $16$. If $a=b$, the deformation shown in Figure \ref{Fig.111} allows to eliminate the color $16$. If $a\neq b$, we do the deformation described in Figure \ref{Fig.115} and then the color $16$ disappears unless when $2a-b=16$ i.e. $b=2a+1$. In this case we apply to $D$ the transformation shown in Figure \ref{Fig.116}.
Finally, we get an equivalent diagram $D_1$ in which no arc has the color $16$.
\begin{figure}[H]
\centering
	\includegraphics[scale=0.115]{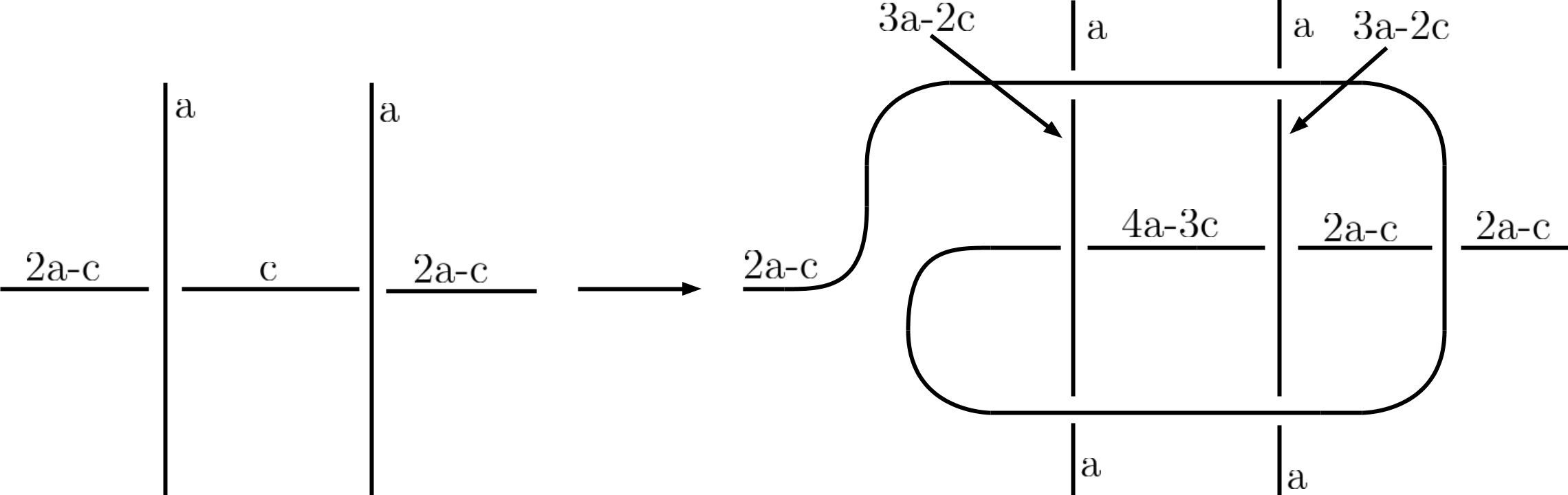} 
	\caption{\label{Fig.111}}
\end{figure}
\begin{figure}[H]
\centering
	\includegraphics[scale=0.115]{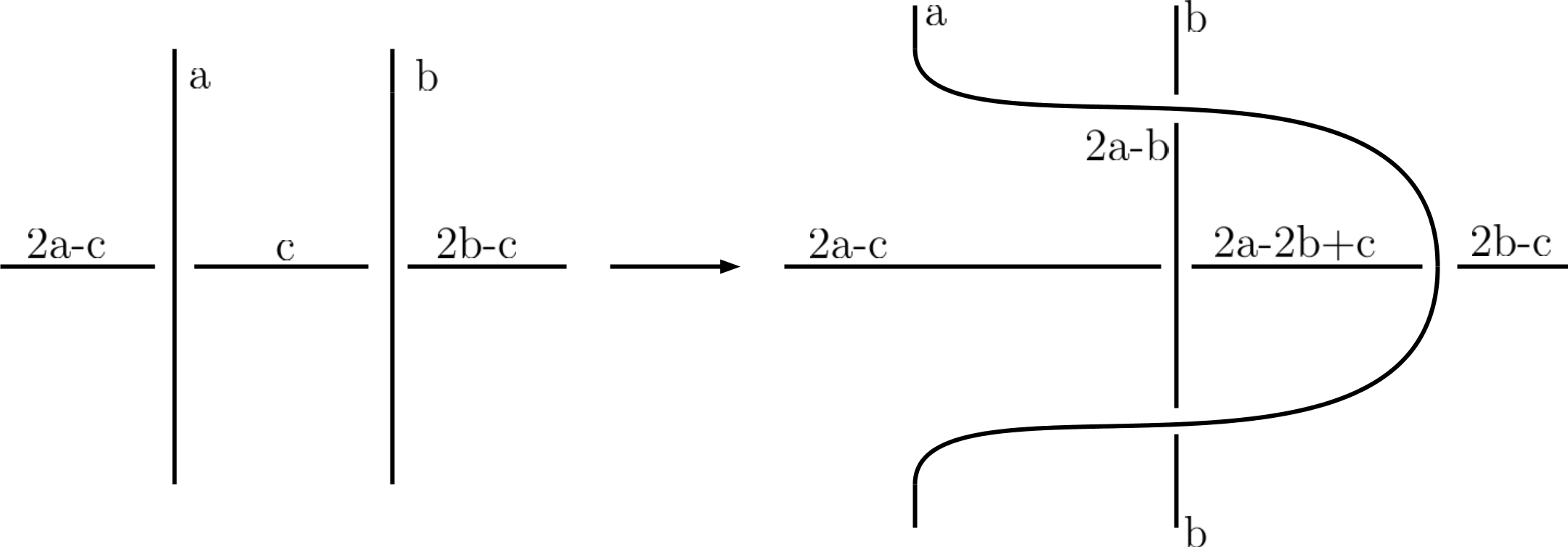} 
	\caption{\label{Fig.115}}
\end{figure}
\begin{figure}[H]
\centering
	\includegraphics[scale=0.115]{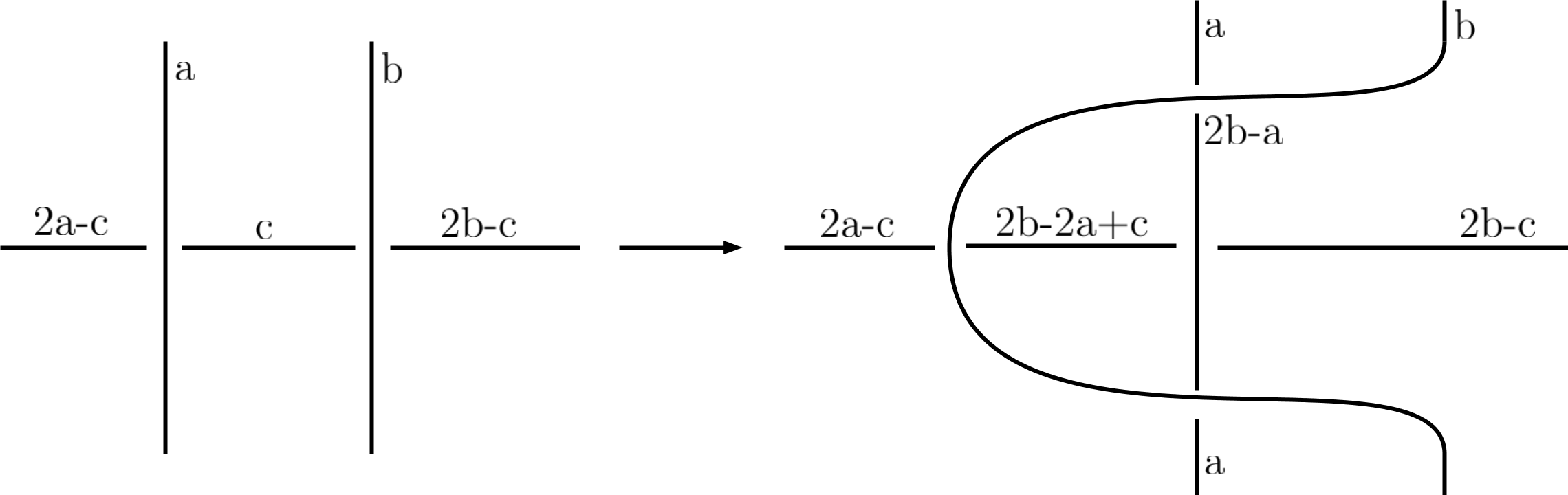} 
	\caption{\label{Fig.116}}
\end{figure}

Now we will deal with a general step $i$, $i\ge 2$. Assume that we have a diagram $D_{i-1}$ that is equivalent to $D$ where the colors $\{c_1,\dots ,c_{i-1}\}$ are not used. We want to show that there exists an equivalent colored diagram $D_i$ which does not use colors $\{c_1,\dots ,c_{i-1},c_i\}$. Here, $c_i$ will be denoted by $c$ as in the figures. Like in the first step, we will consider the three cases:\\
\noindent{\textbf{Case 1}} 
Assume that $D_{i-1}$ has a crossing of the type $\left\{c\vert c\vert c \right\}$. Then there exists a crossing of type $\left\{2a-c\vert a\vert c \right\}$ or $\left\{a\vert c \vert 2c-a \right\}$ for some $a$ distinct from $c$ and $a\notin\{c_1,\dots ,c_{i-1}\}$. In the case of the first crossing we deform the arc colored by $a$ as indicated in Figure \ref{Fig.104} which results in the crossing $\left\{c\vert c\vert c\right\}$ disappearing.\\
In the case of the second crossing, we do the deformation described in Figure \ref{Fig.105}.
The obtained color $2a-c$ will be different from $c$ and $c_k$ iff $a\neq c$ and $a\neq 9(c+c_{k})$, for each $k$ such that $1\le k\le i-1$.\\
If $a=9(c+c_k)$ we resolve the problem by making the deformation of Figure~\ref{Fig.106}, unless if $(c,c_k)=(7,16)$ or $(c,c_k)=(7,15)$ which occur in the sixth step (i.e. $i=6$). For those cases we will apply to the diagram $D_{i-1}=D_5$
one of the deformations described in the Figure~\ref{Fig.107} according to the value of $a$. So, we eliminate all crossings of the type $\left\{c\vert c\vert c \right\}$.
\begin{figure}[H]
\centering
	\includegraphics[scale=0.123]{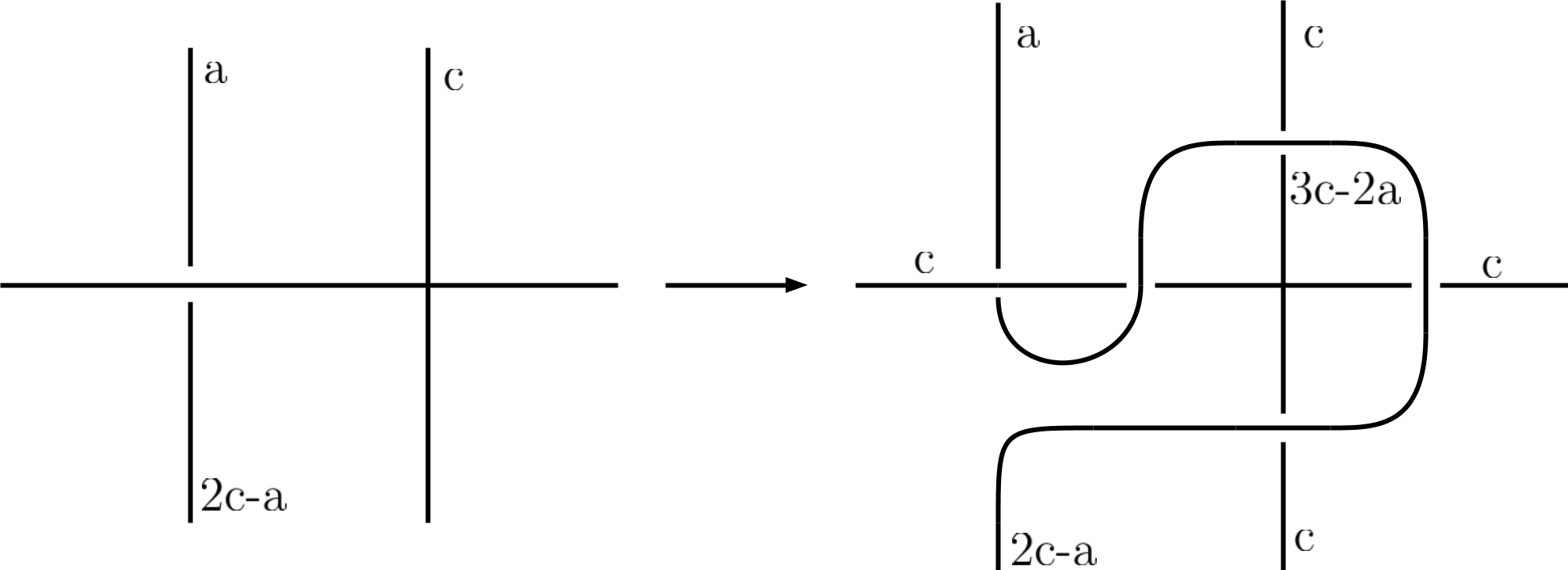} 
	\caption{\label{Fig.106}}
\end{figure}
\begin{figure}[H]
\centering
	\includegraphics[scale=0.103]{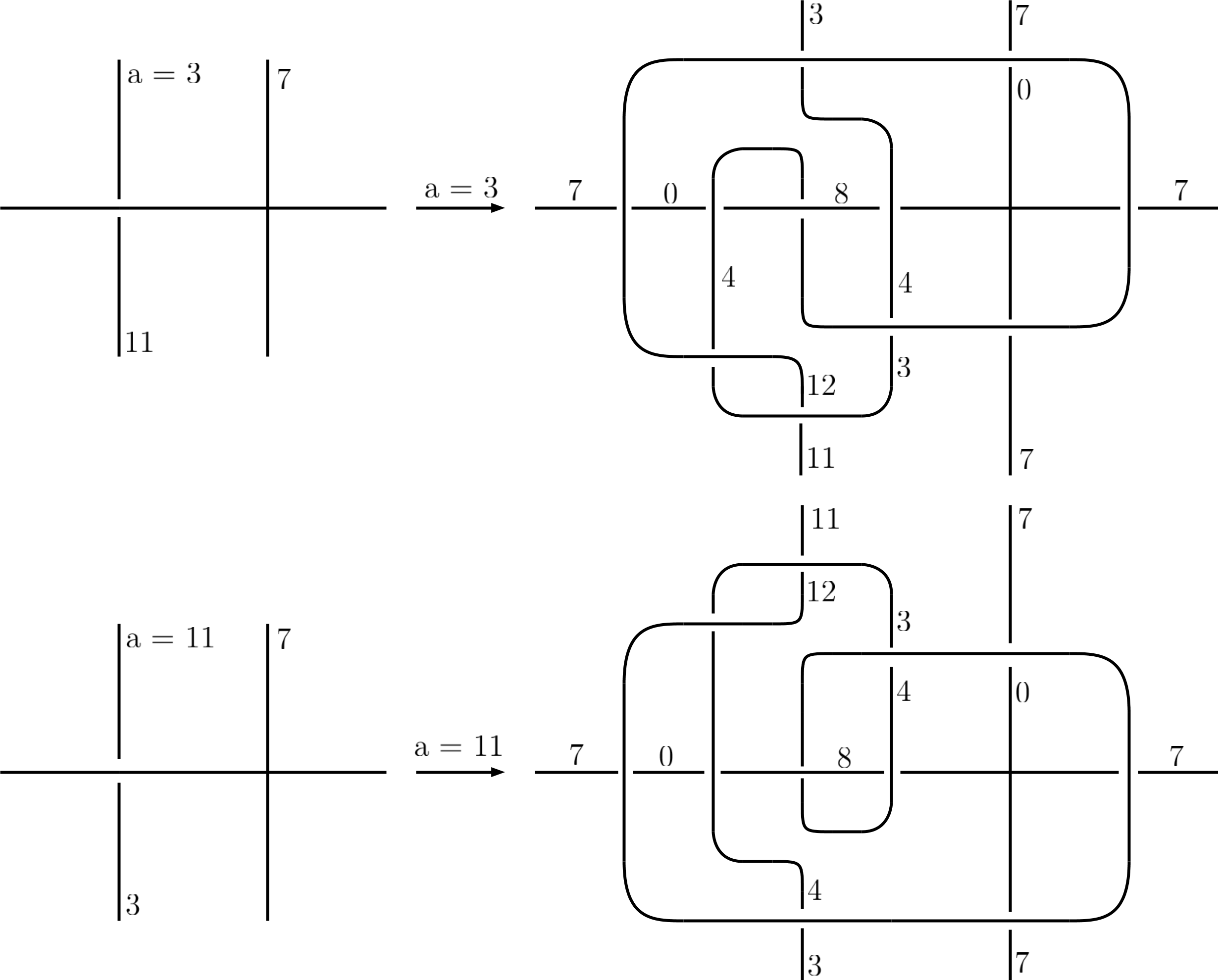} 
	\caption{\label{Fig.107}}
\end{figure}

\noindent{\textbf{Case 2}} Assume that $D_{i-1}$ has a crossing whose over-arc is of color $c=c_i$, i.e. it is of the type $\left\{a\vert c\vert 2c-a \right\}$ for some $a$ different from $c$ and $c_k$, for each $k$, $1\le k\le i-1$. We deform the diagram $D_{i-1}$ as shown in Figure~\ref{Fig.108}.\\
This deformation provides the two new colors $2a-c$ and $3a-2c$, which are different from $c$ and $c_k$ iff $a\ne c$, $a\ne 9(c+c_k)$ and $a\ne 6(c_k+2c)$.
If $a=9(c+c_k)$ or $a= 6(c_k+2c)$ for some $k$, the deformation of Figure~\ref{Fig.109} resolves the problem except when $(c,c_k)=(7,16)$ or $(c,c_k)=(7,15)$ wich occur in the sixth step (i.e. $i=6$). For the two remaining cases we resolve the problem by applying to $D_{i-1}=D_5$ one of the deformations described Figure~\ref{Fig.110} according to the value of $a$.
\begin{figure}[H]
\centering
	\includegraphics[scale=0.132]{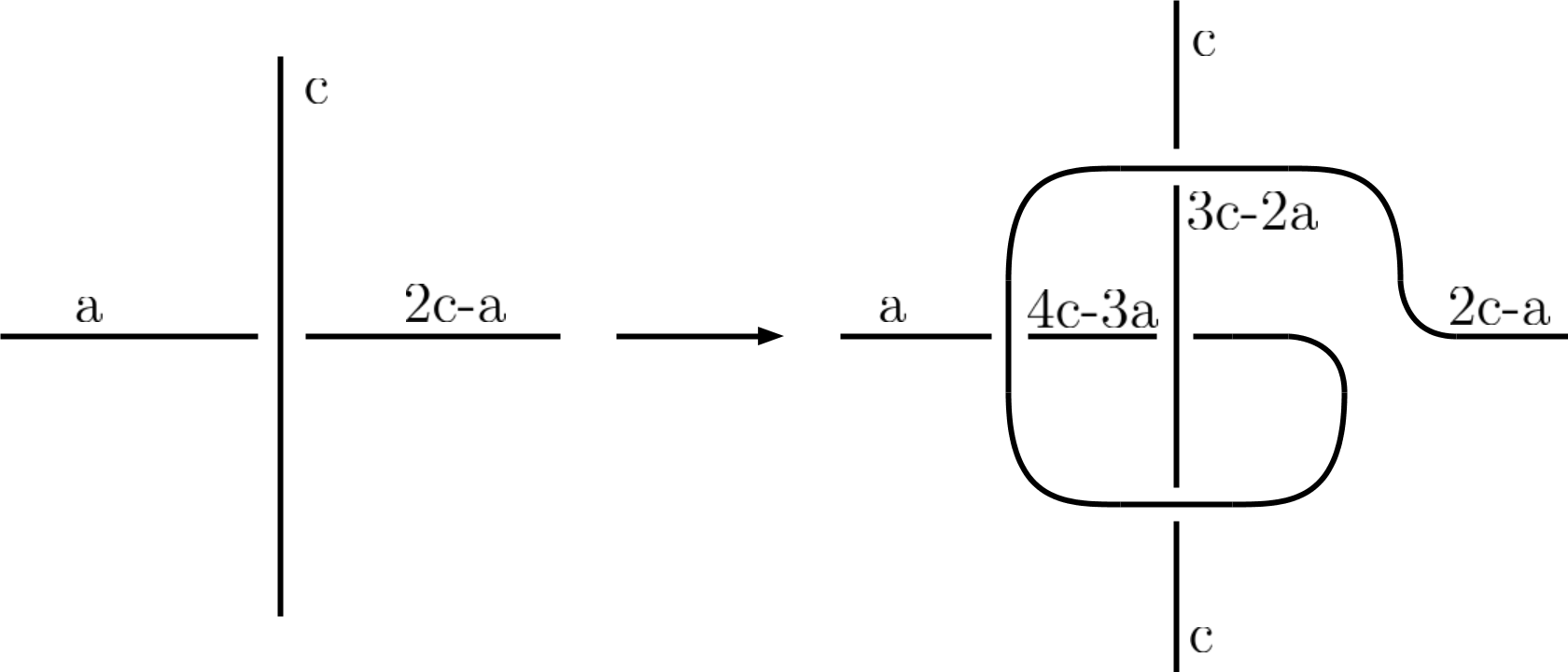} 
	\caption{\label{Fig.109}}
\end{figure}
\begin{figure}[H]
\centering
	\includegraphics[scale=0.11]{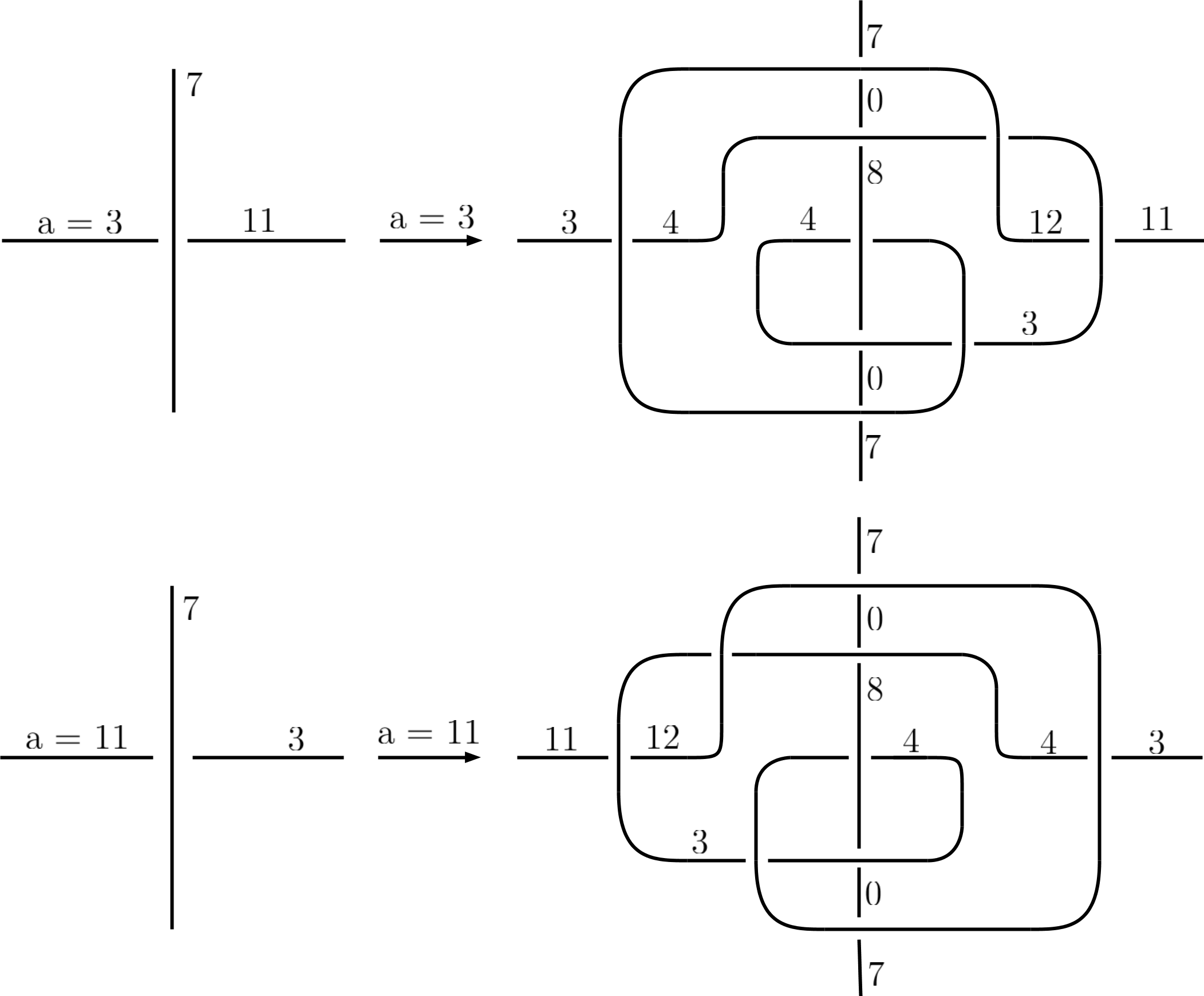} 
	\caption{\label{Fig.110}}
\end{figure}
\noindent{\textbf{Case 3}} Assume that $D_{i-1}$ has a crossing whose under-arc is colored by $c=c_i$. Then $c$ connects two crossings of the type  $\left\{2a-c\vert a\vert c \right\}$ and  $\left\{c\vert b\vert 2b-c \right\}$ for some $a$ and $b$ both distinct from $c$ and $c_k$, for each $k$, $1\le k\le i-1$.\\
\underline{\textbf{If $a=b$}}, we apply to the diagram $D_{i-1}$ the deformation shown in Figure~\ref{Fig.111}. We get the two new colors $3a-2c$ and $4a-3c$. They are different from $c$ and $c_k$ iff $a\ne c$ $a\neq 6(c_k+2c)$ and $a\neq 13(c_k+3c)$, for each $k$, $1\le k\le i-1$.\\
For the remaining cases, if $a=6(c_k+2c)$ or $a=13(c_k+3c)$ (obviously $a\neq c$ and $a\neq c_k$), some other transformations are required. They are listed in the following table.\\
\scriptsize{
\begin{tabular}[t]{|D{0.4cm}|D{1cm}|D{1.6cm}|D{1.6cm}|}\hline
Step&$(c,c_k)$& $a=6(c_k + 2c)$& Required deformation\tabularnewline\hline
$2$&$(15,16)$ & $4$& Fig. \ref{Fig.112}  \tabularnewline\hline
$3$&$(9,16)$ & $0$& Fig. \ref{Fig.114} \tabularnewline\cline{2-4}
&$(9,15)$ & $11$ &Fig. \ref{Fig.112} \tabularnewline\hline
$4$&$(10,16)$ & $12$& Fig. \ref{Fig.112}  \tabularnewline\cline{2-4}
&$(10,15)$& $6$& Fig. \ref{Fig.61}  \tabularnewline\hline
$5$&$(6,10)$ &$13$& Fig. \ref{Fig.112} \tabularnewline\hline
$6$&$(7,15)$ &$4$& Fig. \ref{Fig.114} \tabularnewline\cline{2-4}
&$(7,9)$ &$2$& Fig. \ref{Fig.61} \tabularnewline\cline{2-4}
&$(7,6)$ &$1$& Fig. \ref{Fig.66} \tabularnewline\hline
$7$&$(5,16)$ &$3$ & Fig. \ref{Fig.67} \tabularnewline\cline{2-4}
&$(5,10)$ &$1$& Fig. \ref{Fig.62}  \tabularnewline\cline{2-4}
&$(5,6)$ &$11$& Fig. \ref{Fig.68}  \tabularnewline\cline{2-4}
&$(5,7)$ &$0$& Fig. \ref{Fig.69}  \tabularnewline\hline
$9$&$(11,7)$ &$4$& Fig. \ref{Fig.75}  \tabularnewline\hline
$11$&$(13,14)$ &$2$& Fig. \ref{Fig.83}  \tabularnewline\hline
\end{tabular}}
\scriptsize{\begin{tabular}[t]{|D{0.4cm}|D{1cm}|D{1.8cm}|D{1.6cm}|}\hline
Step&$(c,c_k)$& $a=13(c_k + 3c)$& Required deformation \tabularnewline\hline
$2$&$(15,16)$ & $11$& Fig. \ref{Fig.113} \tabularnewline\hline
$3$&$(9,15)$ & $2$& Fig. \ref{Fig.113}  \tabularnewline\hline
$4$&$(10,16)$ & $3$& Fig. \ref{Fig.113}  \tabularnewline\cline{2-4}
&$(10,15)$ & $7$ & Fig. \ref{Fig.63}  \tabularnewline\cline{2-4}
&$(10,9)$ &$14$& Fig. \ref{Fig.113}   \tabularnewline\hline
$5$&$(6,16)$ &$0$ & Fig. \ref{Fig.113} \tabularnewline\cline{2-4}
&$(6,15)$ &$4$& Fig. \ref{Fig.61}  \tabularnewline\cline{2-4}
&$(6,10)$ &$7$& Fig. \ref{Fig.62}\tabularnewline\hline
$6$&$(7,16)$ &$5$& Fig. \ref{Fig.64}  \tabularnewline\cline{2-4}
&$(7,10)$ &$12$ & Fig. \ref{Fig.65} \tabularnewline\hline
$7$&$(5,16)$ &$12$& Fig. \ref{Fig.62}   \tabularnewline\hline
$8$&$(1,15)$ & $13$& Fig. \ref{Fig.70} \tabularnewline\cline{2-4}
&$(1,7)$ & $11$& Fig. \ref{Fig.71}  \tabularnewline\cline{2-4}
&$(1,5)$ &$2$& Fig. \ref{Fig.72}  \tabularnewline\hline
$9$&$(11,15)$ & $12$& Fig. \ref{Fig.73}  \tabularnewline\cline{2-4}
&$(11,6)$ &$14$& Fig. \ref{Fig.74}   \tabularnewline\hline
$10$&$(14,9)$ &$0$& Fig. \ref{Fig.76}  \tabularnewline\cline{2-4}
&$(14,10)$ &$13$& Fig. \ref{Fig.77}   \tabularnewline\cline{2-4}
&$(14,7)$ &$8$& Fig. \ref{Fig.78}  \tabularnewline\hline
$11$&$(13,10)$ &$8$& Fig. \ref{Fig.85}  \tabularnewline\cline{2-4}
&$(13,11)$ &$4$ & Fig. \ref{Fig.84}  \tabularnewline\hline
\end{tabular}
}
\captionof{table}{\scriptsize{List of the remaining cases at each step and the corresponding deformations.}}
\begin{figure}[H]
\centering
	\includegraphics[scale=0.113]{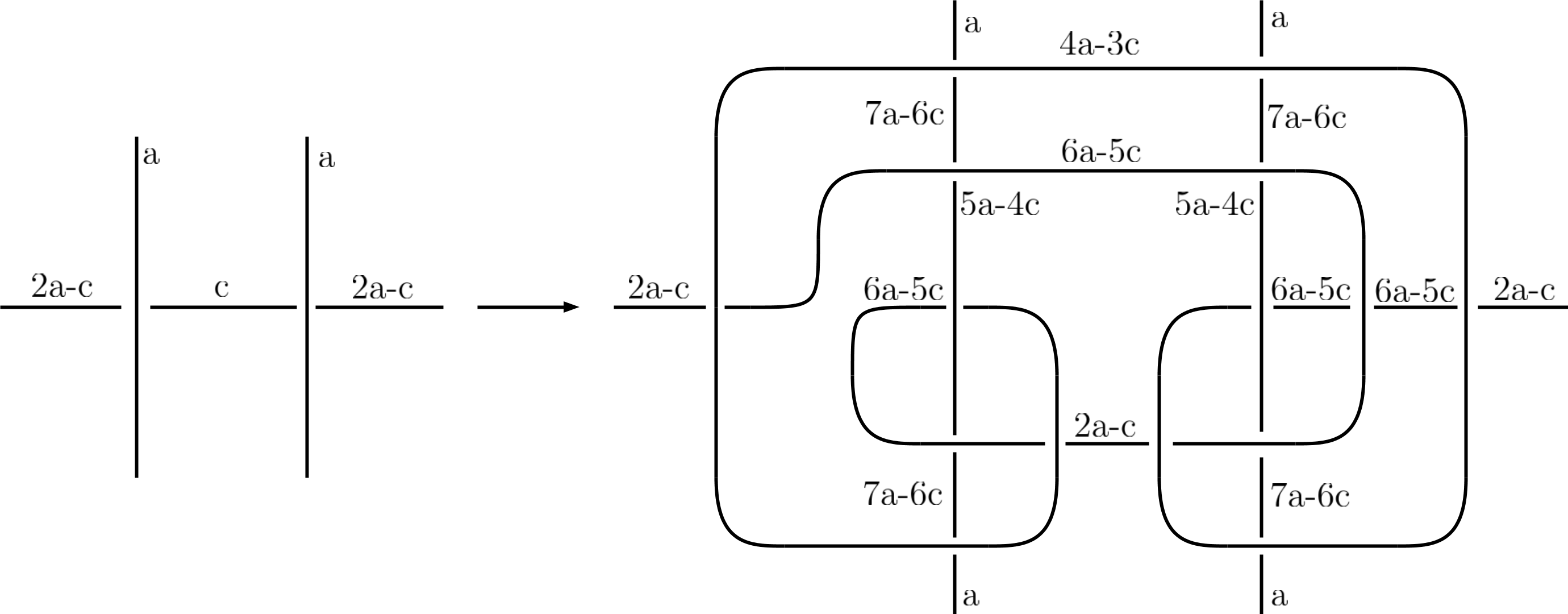} 
	\caption{\label{Fig.112}}
\end{figure}

\begin{figure}[H]
\centering
	\includegraphics[scale=0.113]{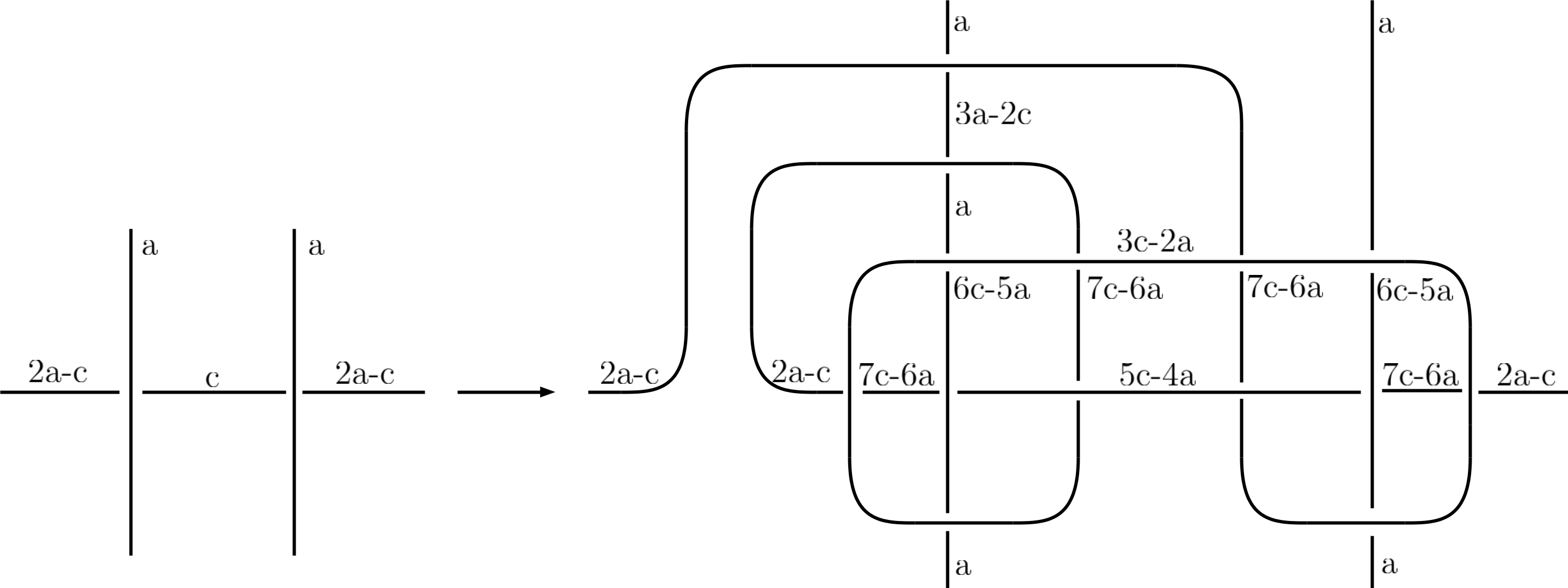} 
	\caption{\label{Fig.113}}
\end{figure}

\begin{figure}[H]
\centering
	\includegraphics[scale=0.12]{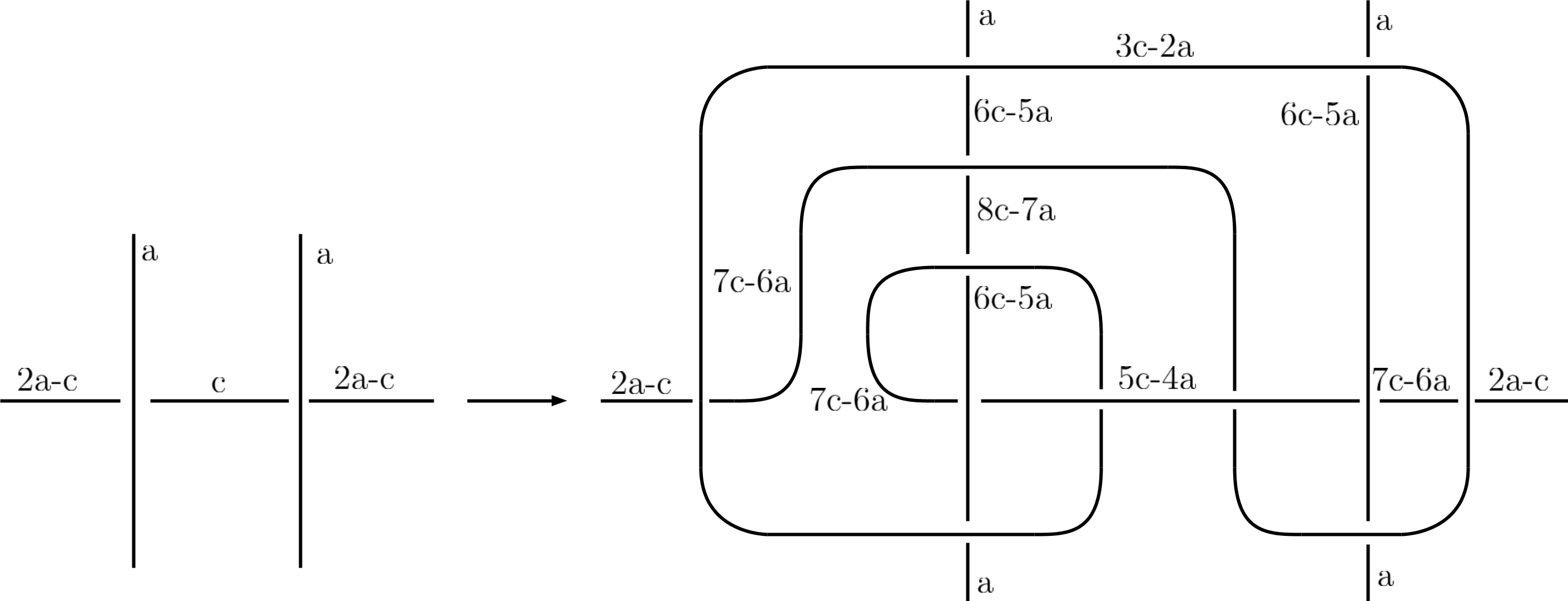} 
	\caption{\label{Fig.114}}
\end{figure}

\begin{figure}[H]
\centering
	\includegraphics[scale=0.12]{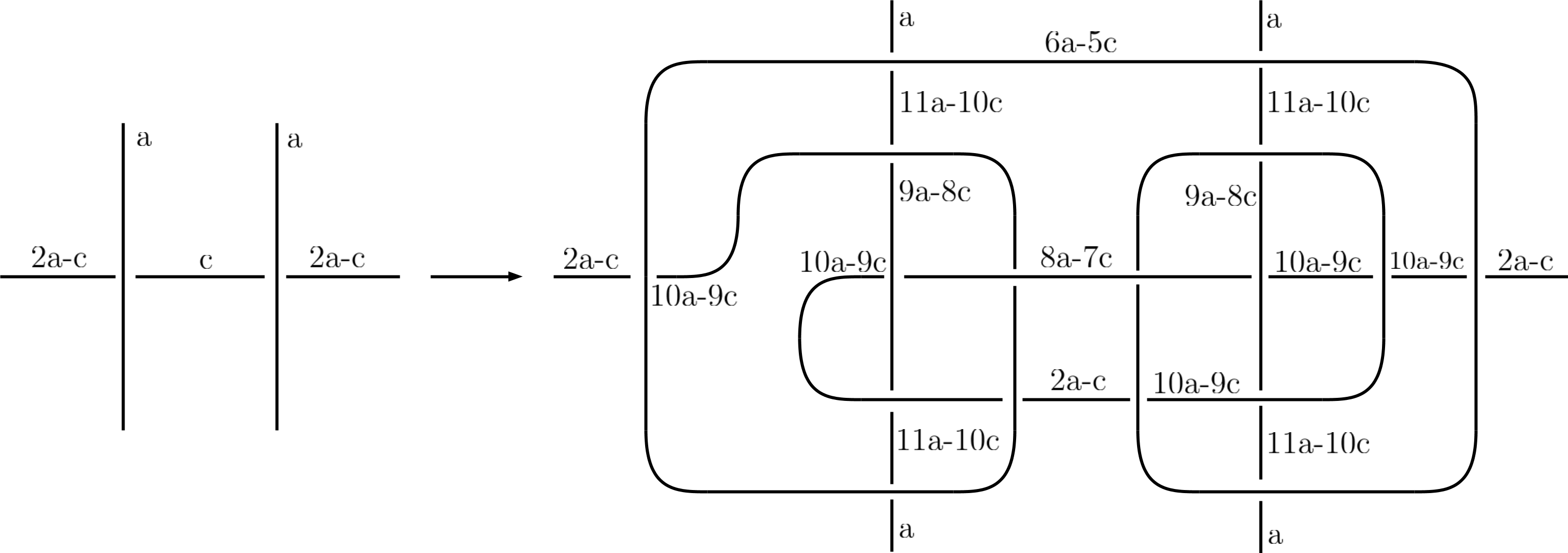} 
	\caption{\label{Fig.61}}
\end{figure}

\begin{figure}[H]
\centering
	\includegraphics[scale=0.12]{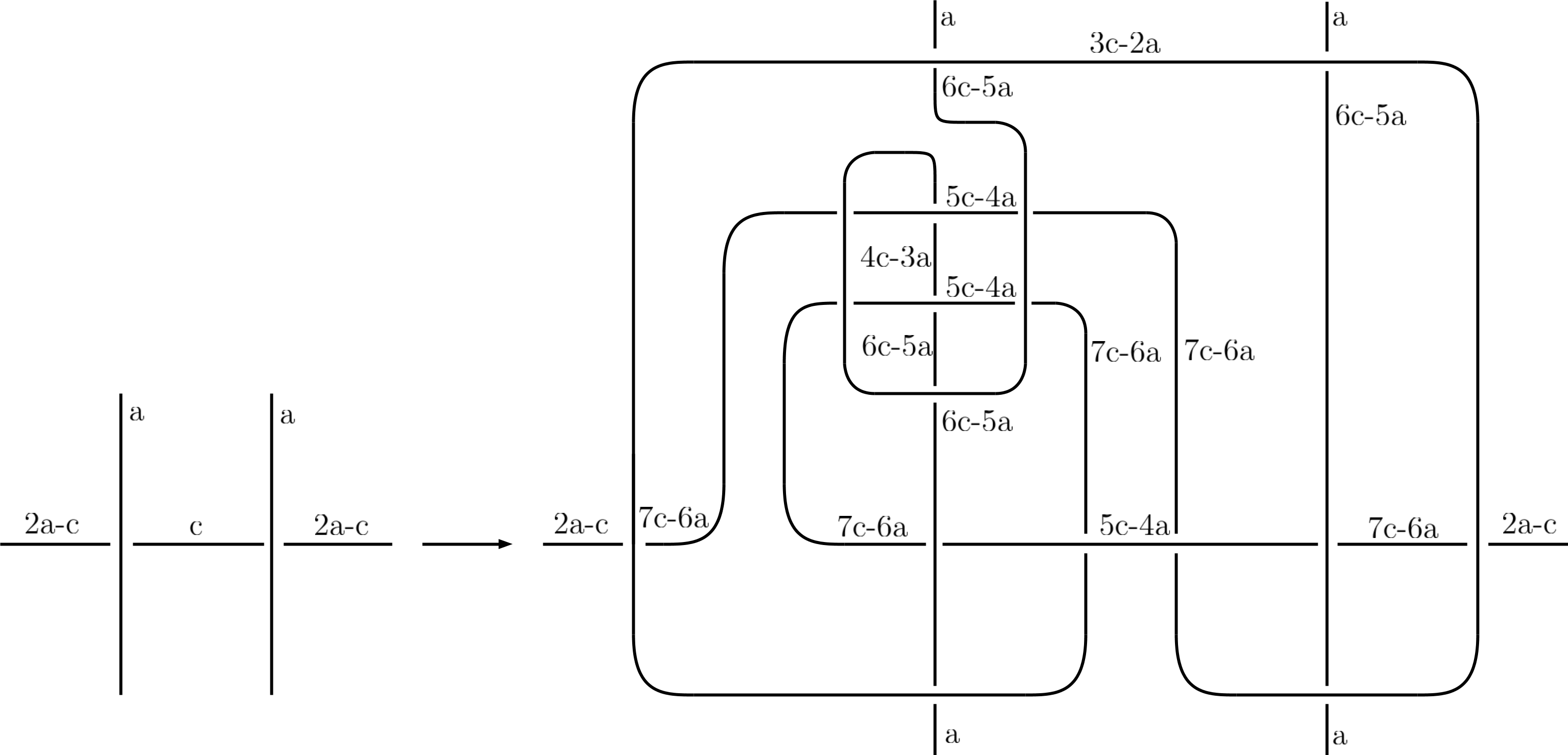} 
	\caption{\label{Fig.62}}
\end{figure}

\begin{figure}[H]
\centering
	\includegraphics[scale=0.098]{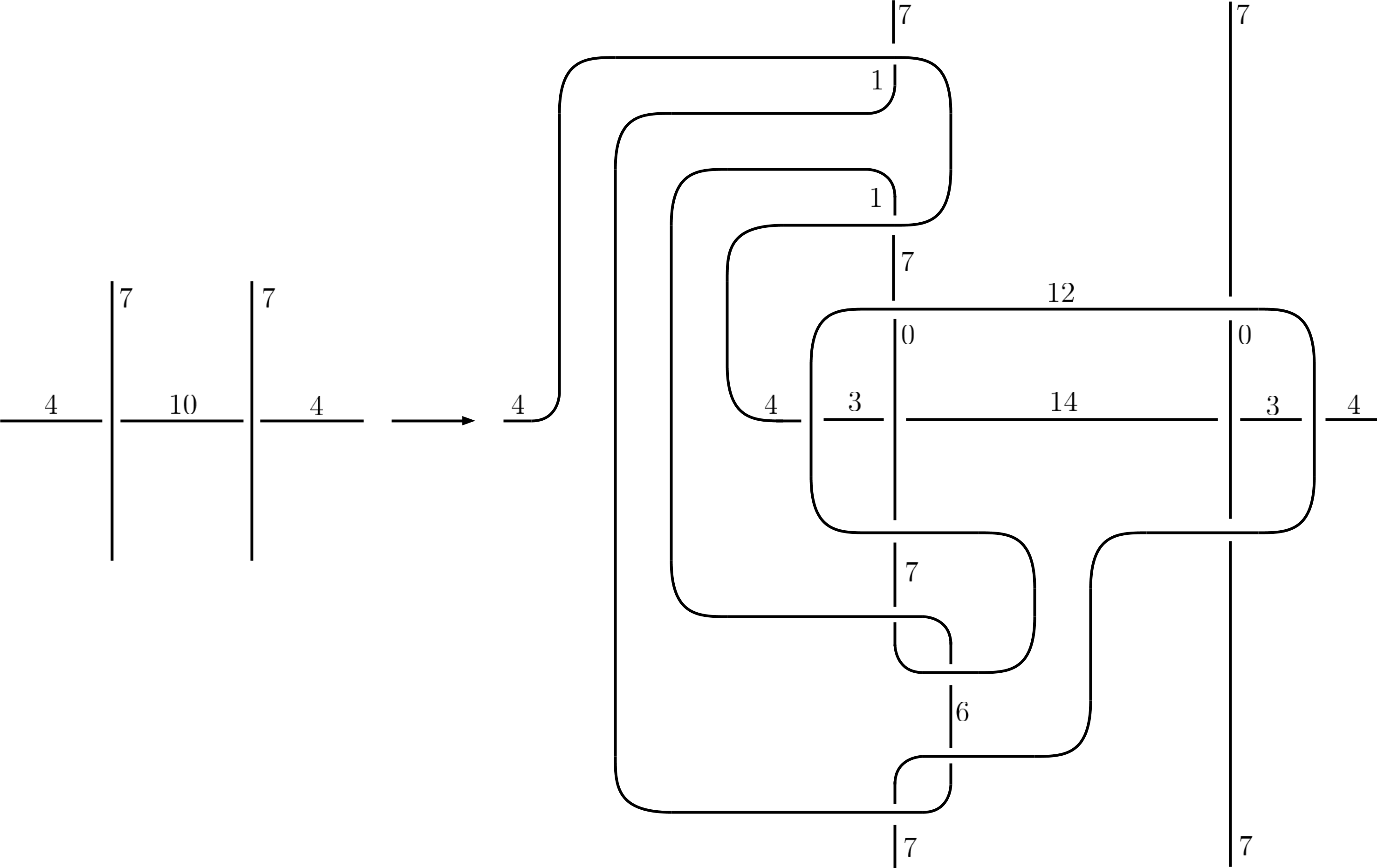} 
	\caption{\label{Fig.63}}
\end{figure}

\begin{figure}[H]
\centering
	\includegraphics[scale=0.1]{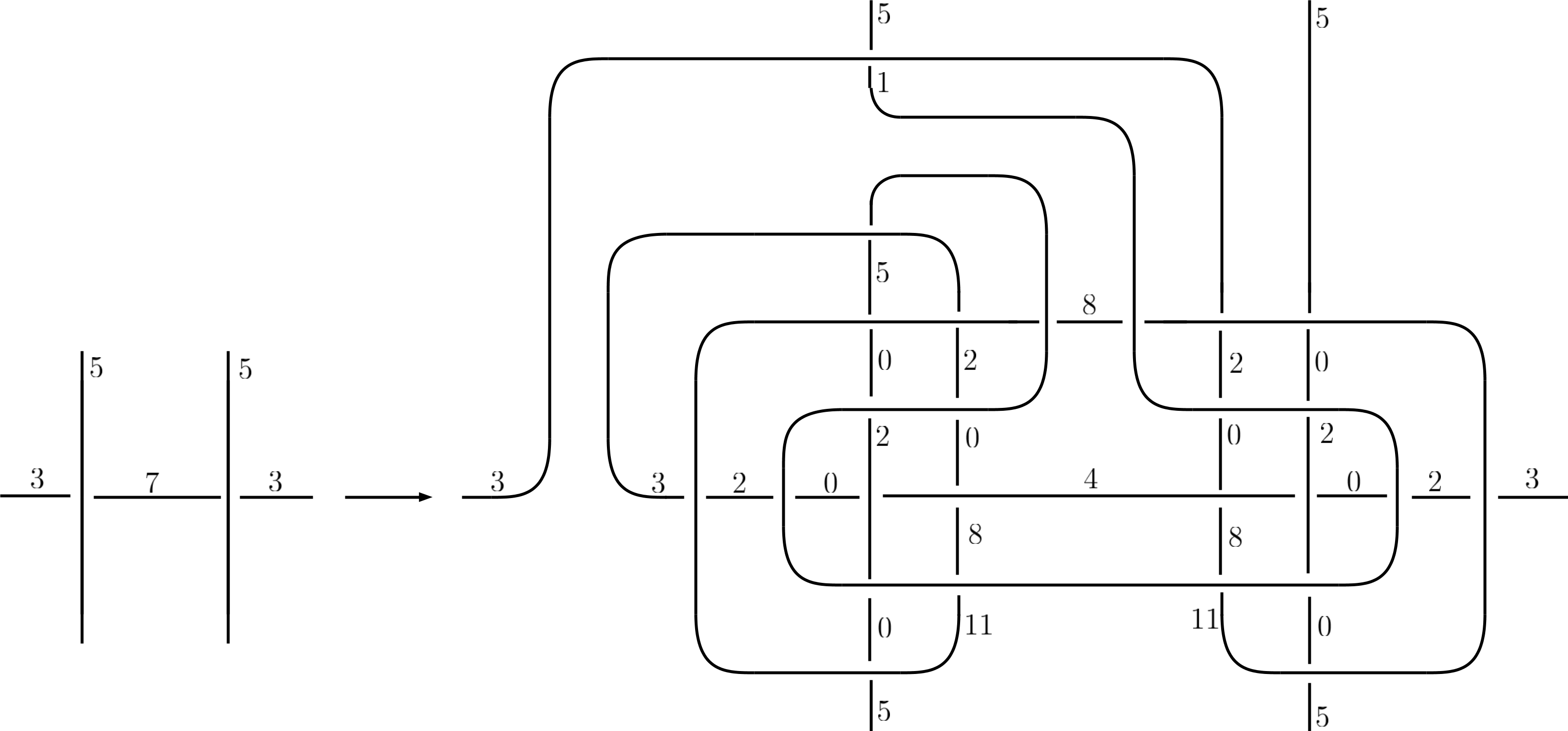} 
	\caption{\label{Fig.64}}
\end{figure}

\begin{figure}[H]
\centering
	\includegraphics[scale=0.1]{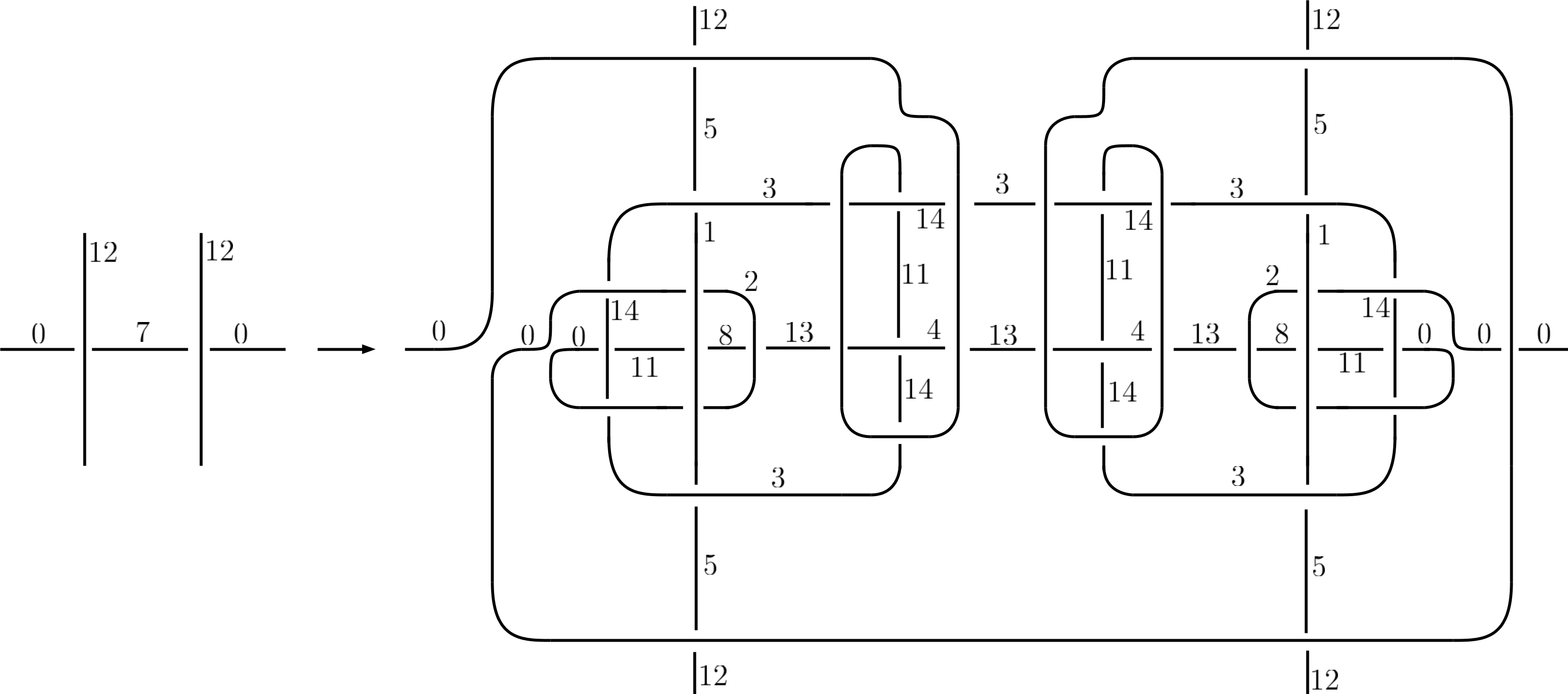} 
	\caption{\label{Fig.65}}
\end{figure}

\begin{figure}[H]
\centering
	\includegraphics[scale=0.1]{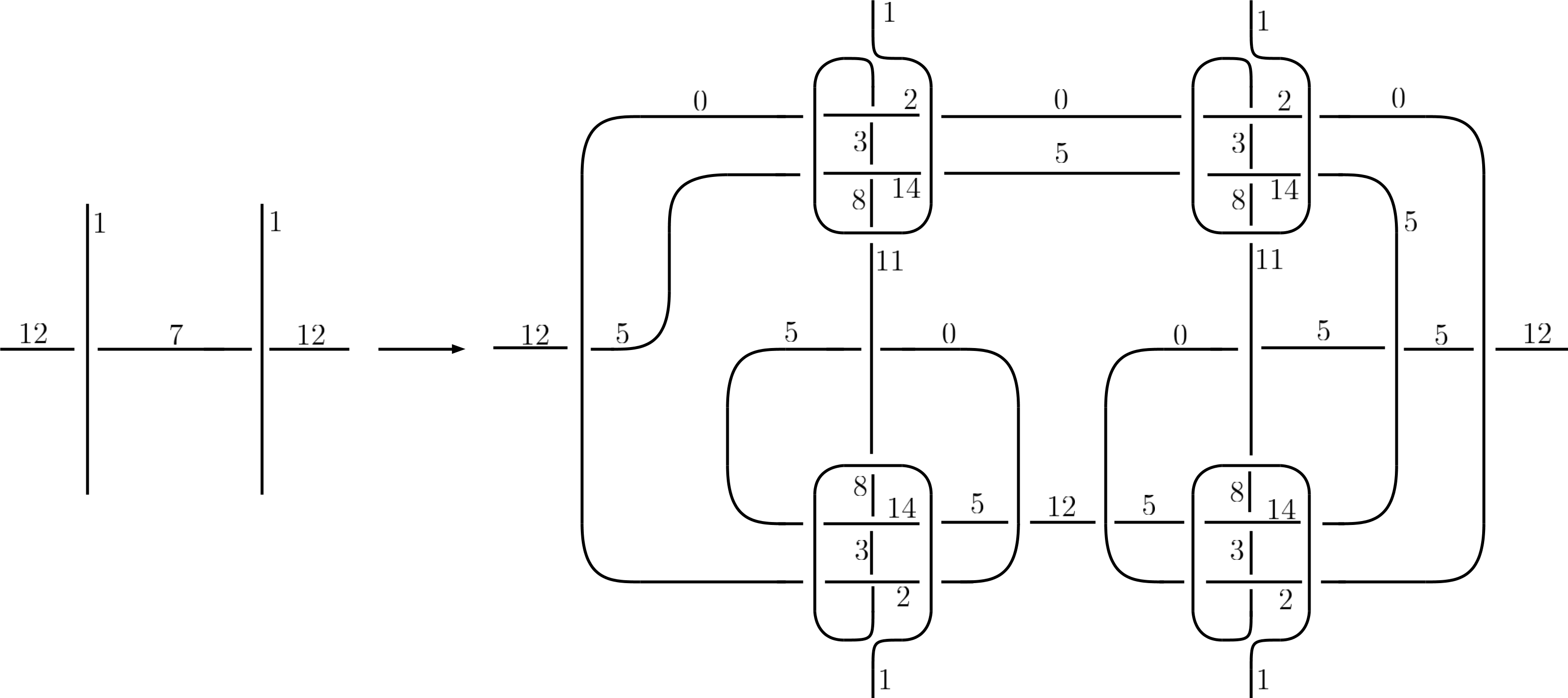} 
	\caption{\label{Fig.66}}
\end{figure}
\begin{figure}[H]
\centering
	\includegraphics[scale=0.1]{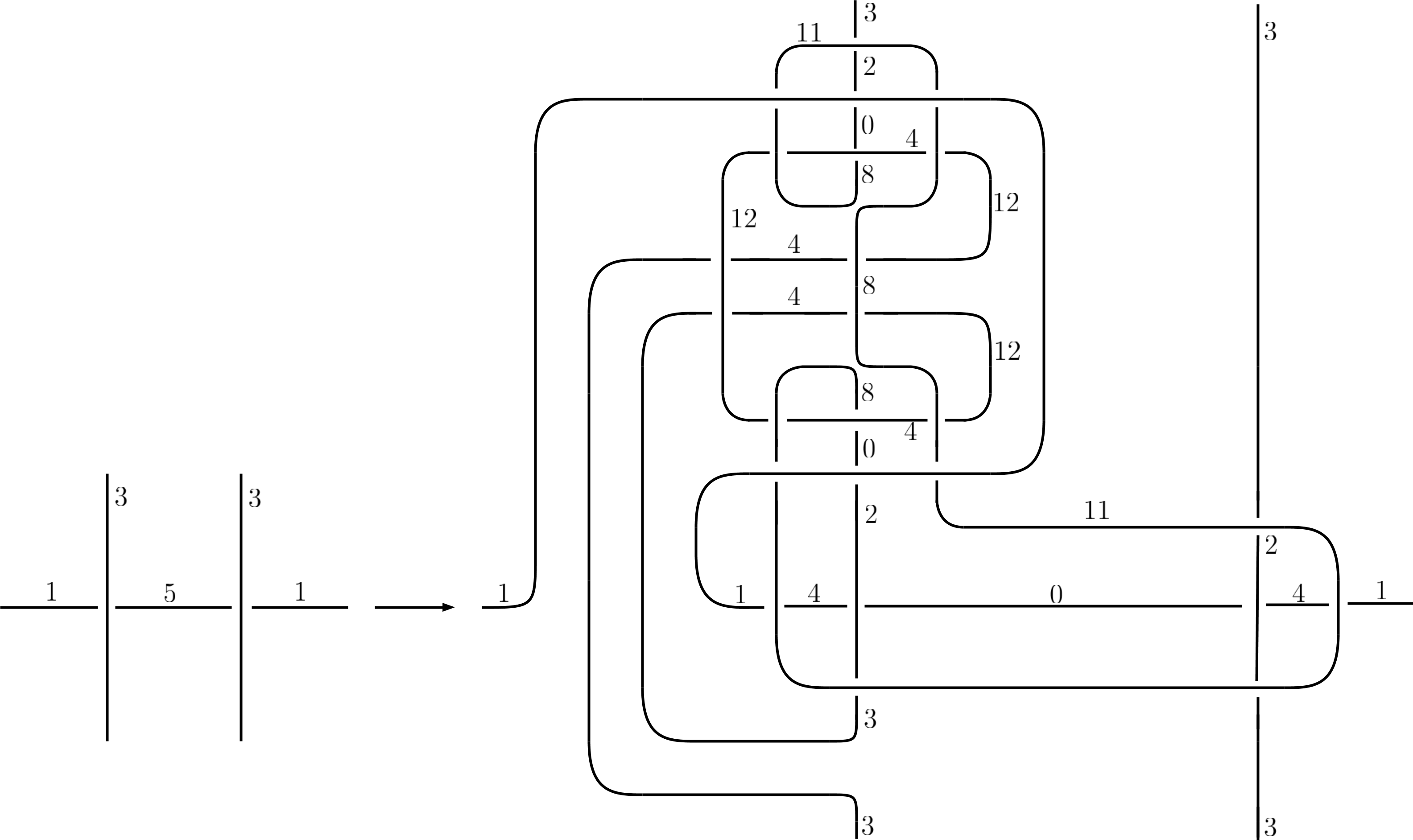} 
	\caption{\label{Fig.67}}
\end{figure}

\begin{figure}[H]
\centering
	\includegraphics[scale=0.104]{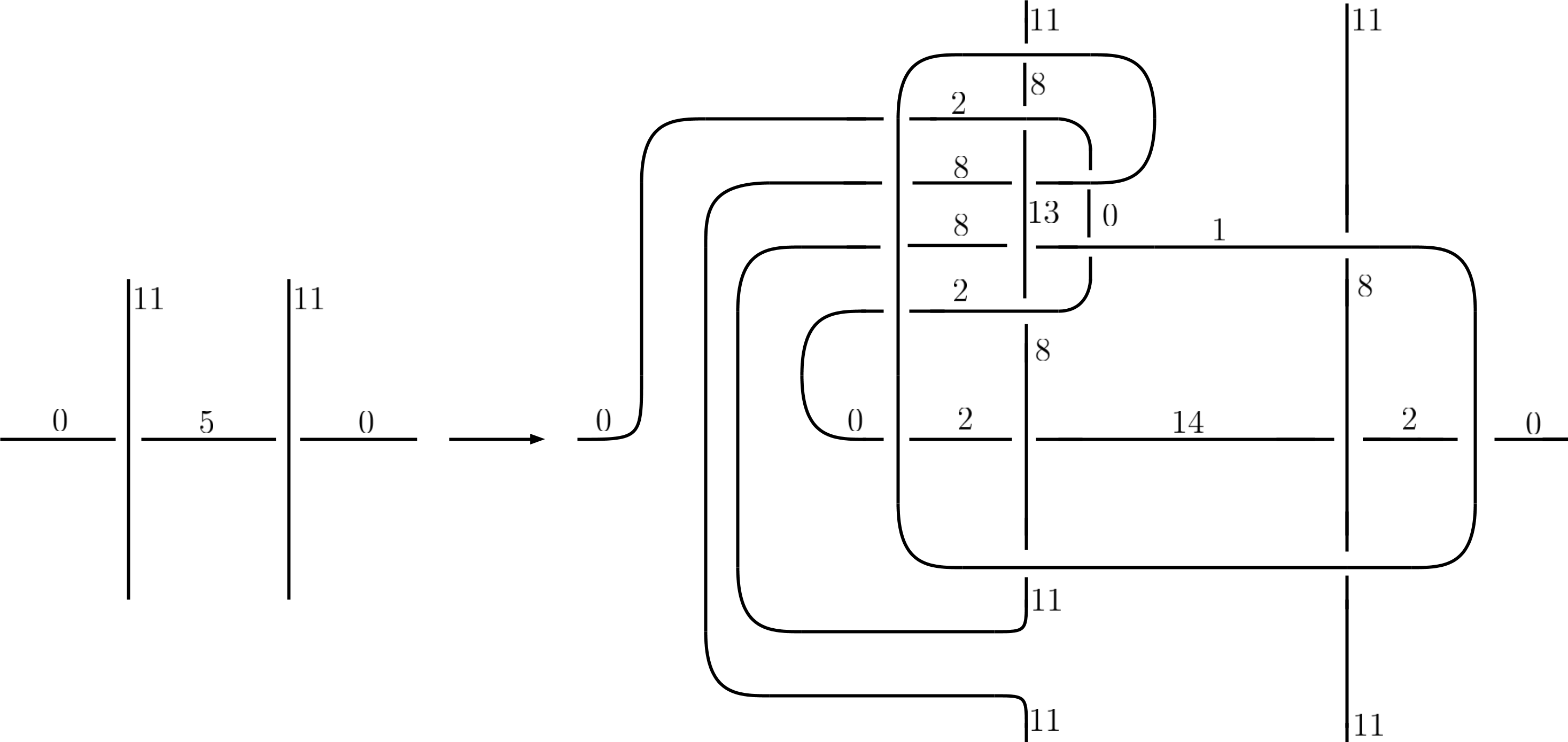} 
	\caption{\label{Fig.68}}
\end{figure}

\begin{figure}[H]
\centering
	\includegraphics[scale=0.09]{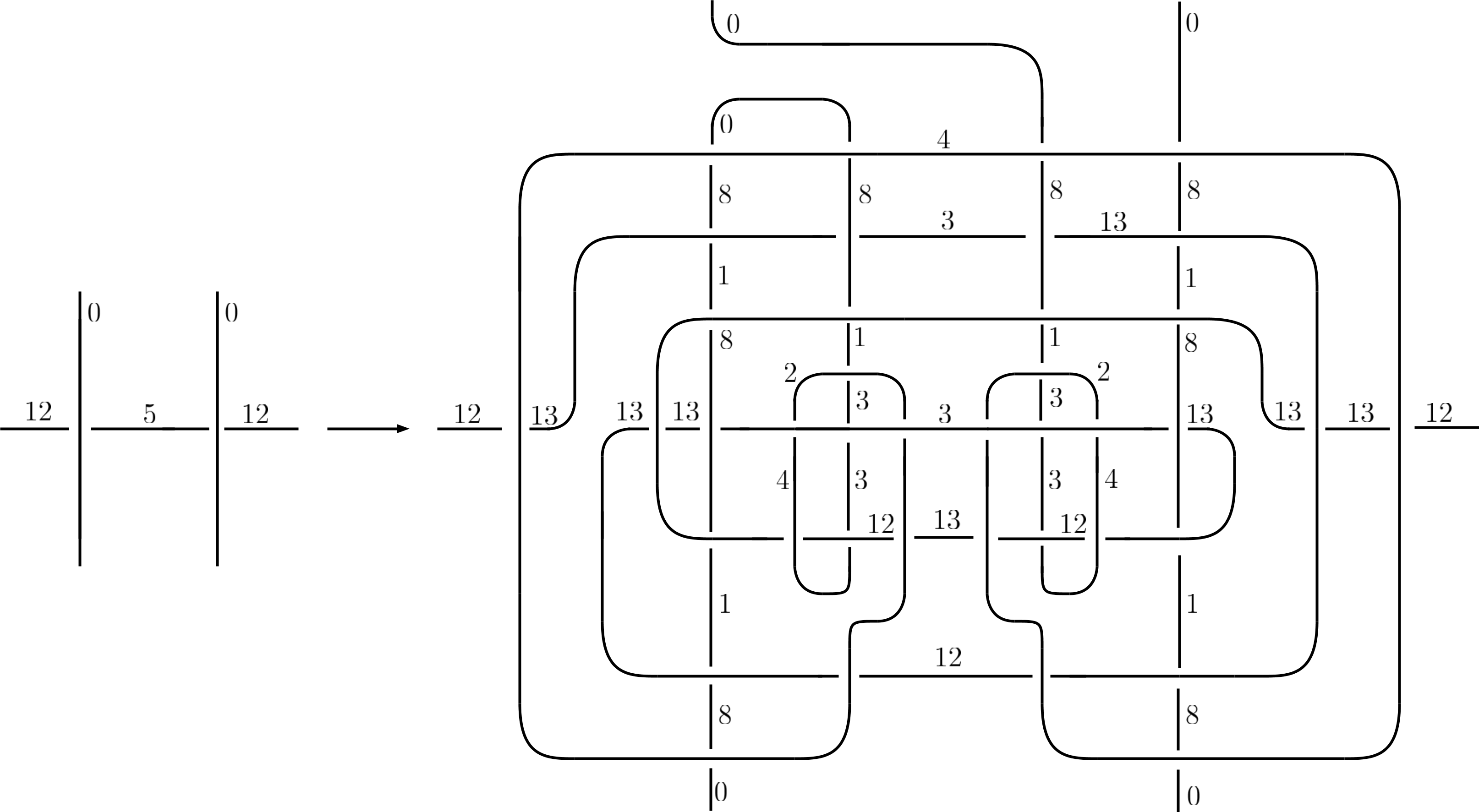} 
	\caption{\label{Fig.69}}
\end{figure}

\begin{figure}[H]
\centering
	\includegraphics[scale=0.09]{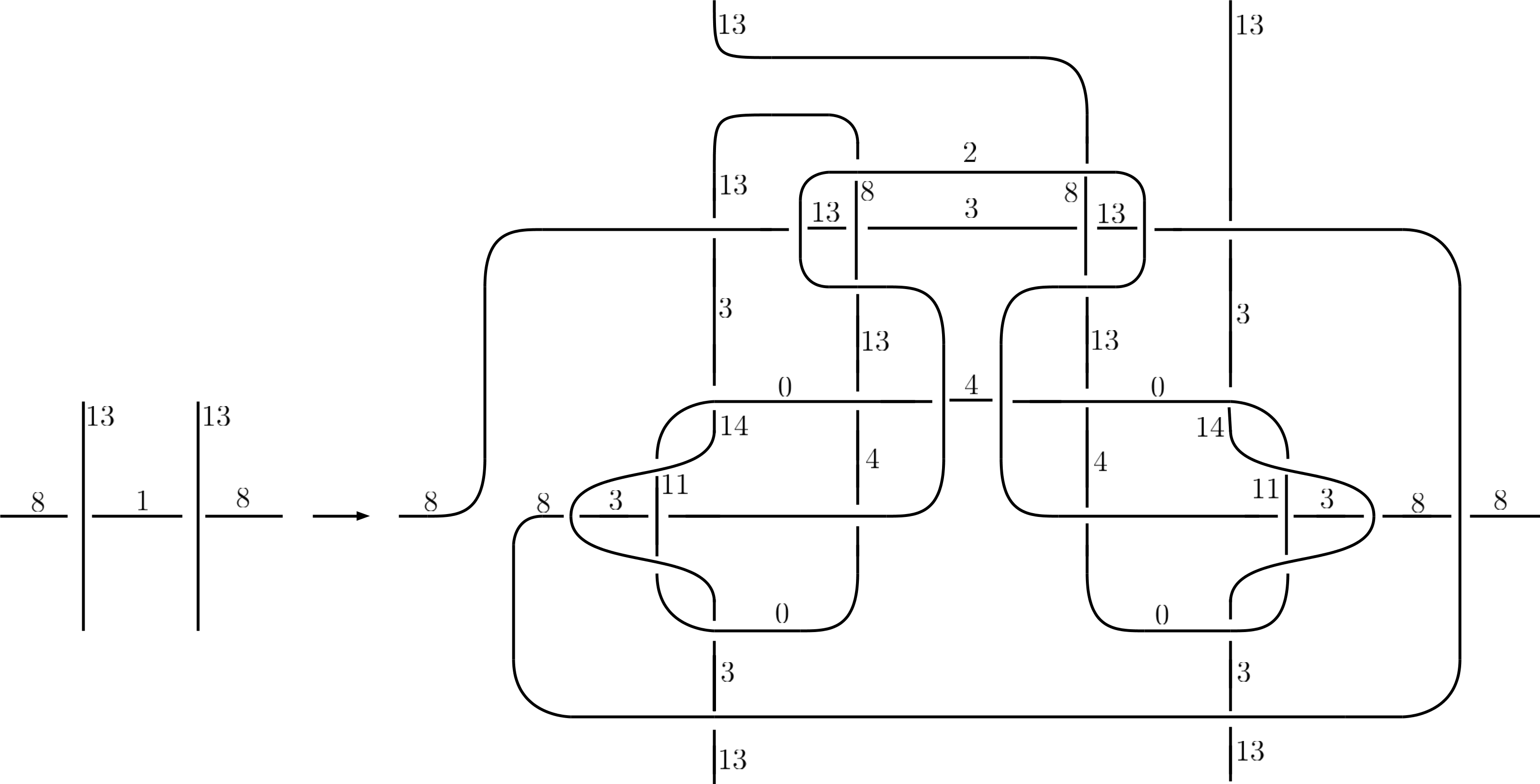} 
	\caption{\label{Fig.70}}
\end{figure}

\begin{figure}[H]
\centering
	\includegraphics[scale=0.095]{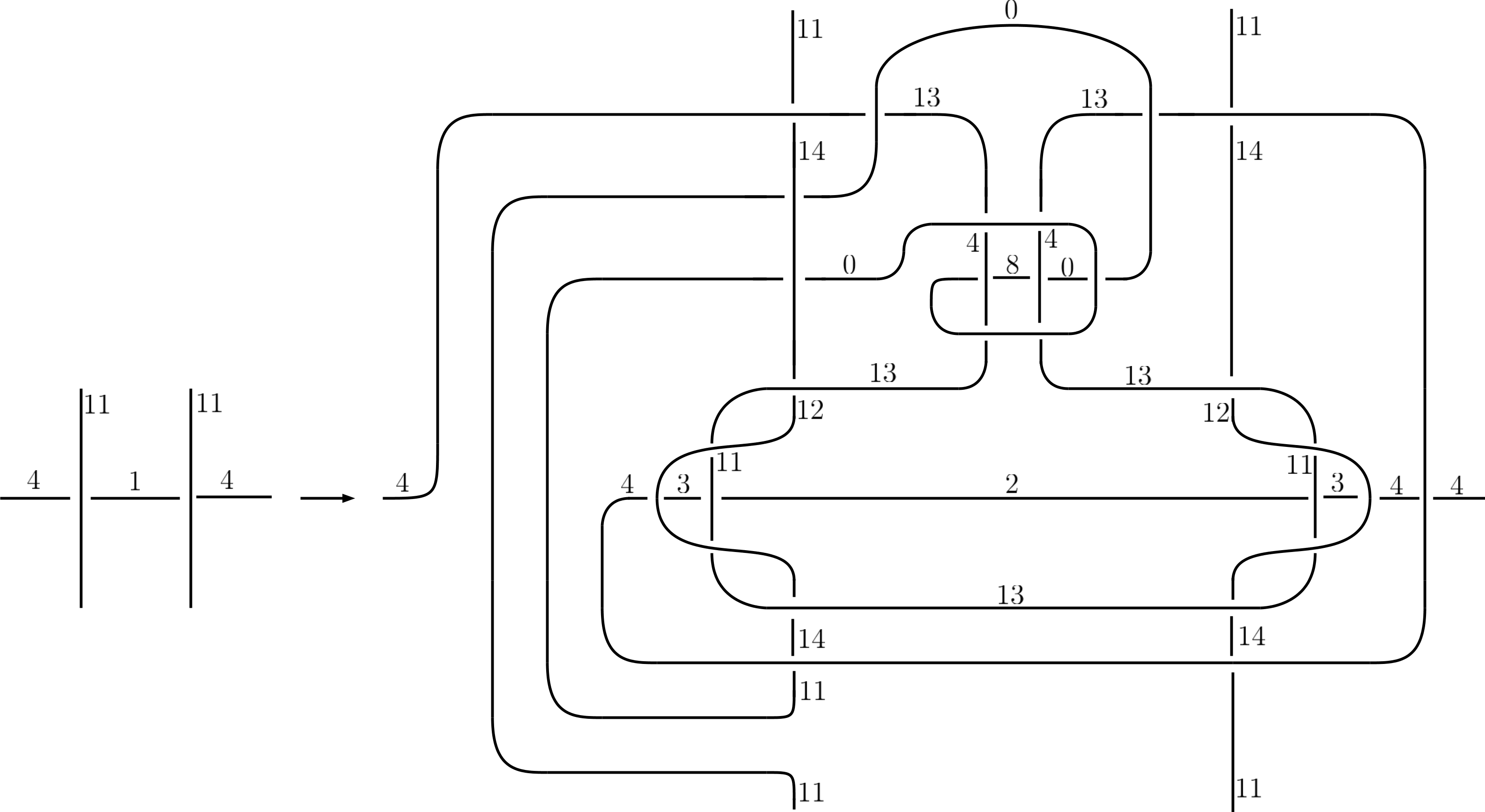} 
	\caption{\label{Fig.71}}
\end{figure}

\begin{figure}[H]
\centering
	\includegraphics[scale=0.1]{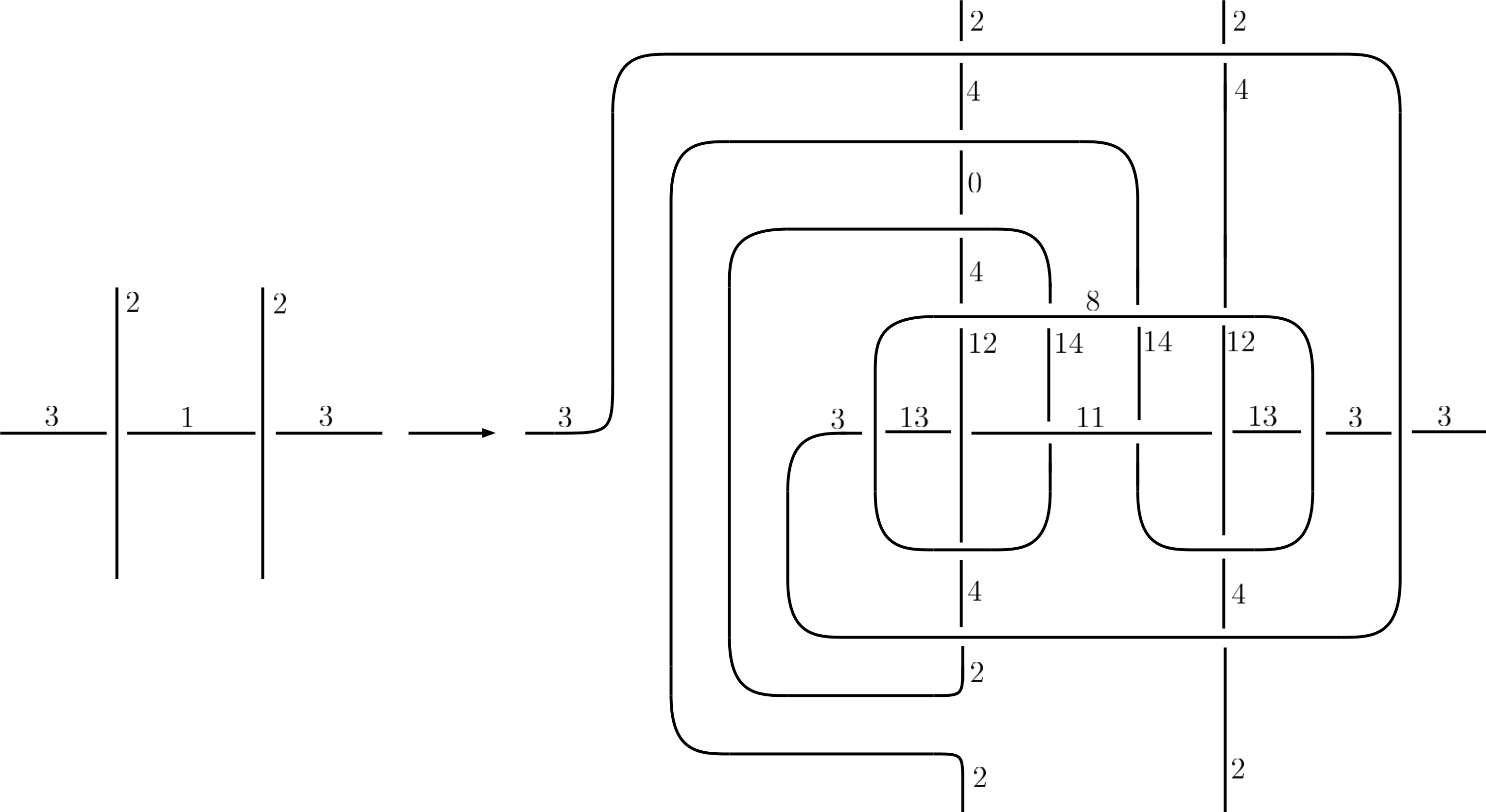} 
	\caption{\label{Fig.72}}
\end{figure}

\begin{figure}[H]
\centering
	\includegraphics[scale=0.104]{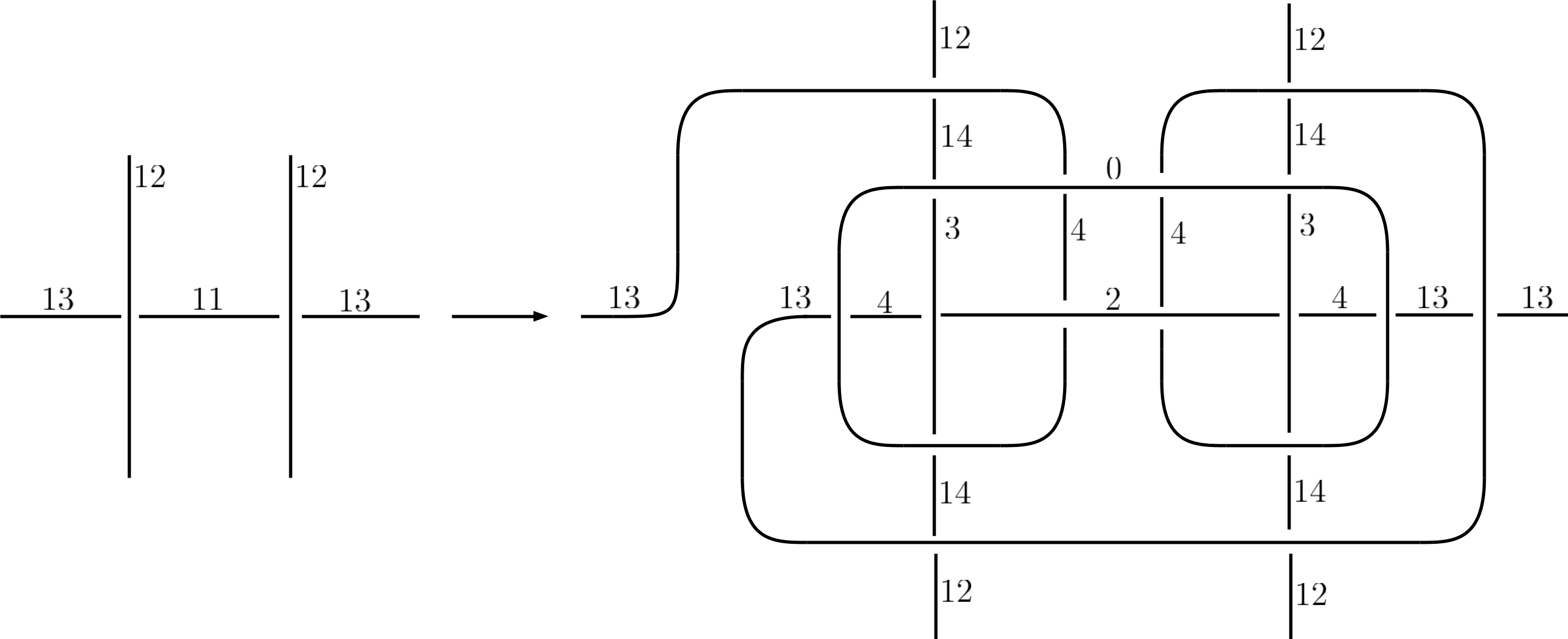} 
	\caption{\label{Fig.73}}
\end{figure}

\begin{figure}[H]
\centering
	\includegraphics[scale=0.1]{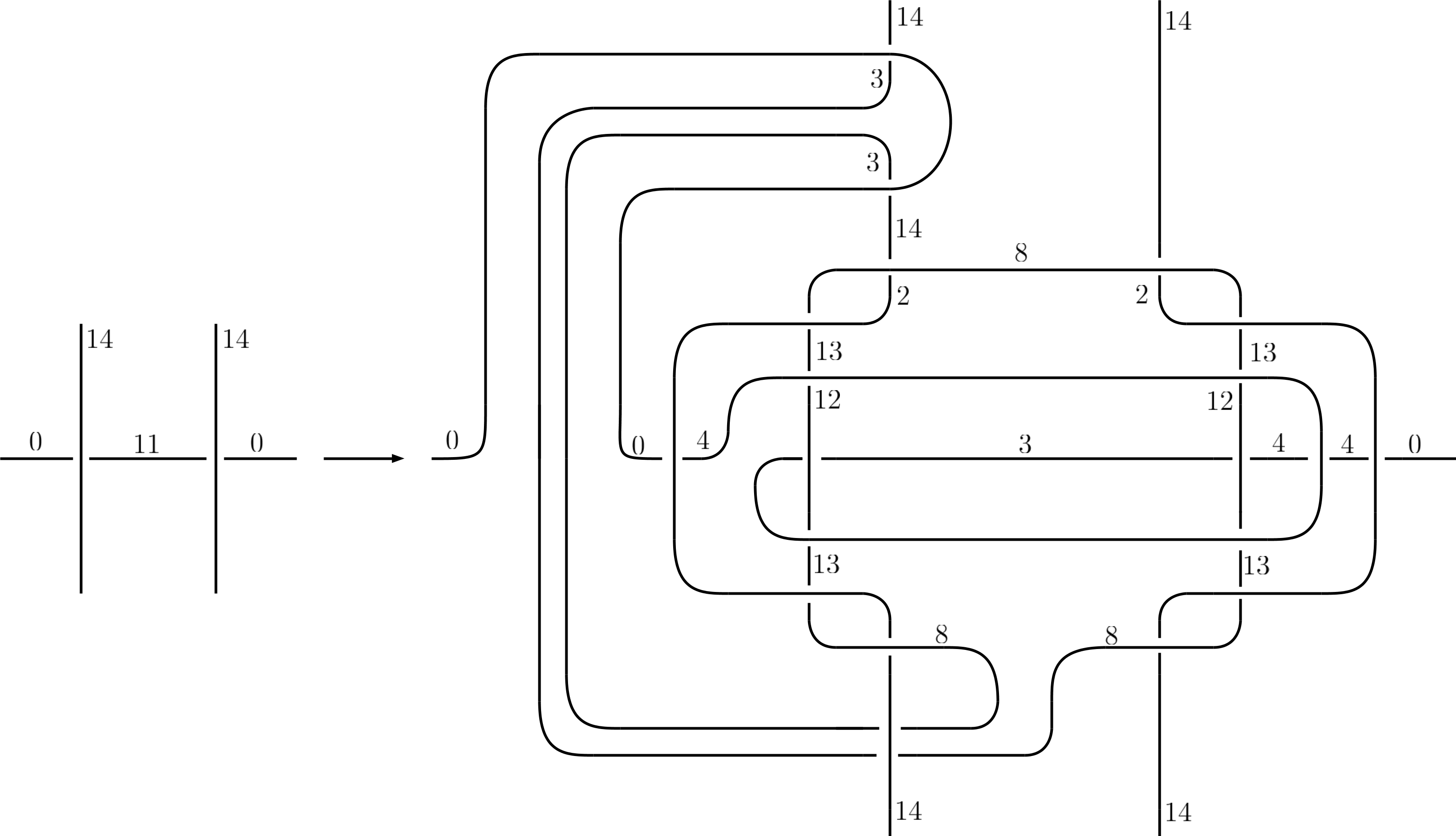} 
	\caption{\label{Fig.74}}
\end{figure}

\begin{figure}[H]
\centering
	\includegraphics[scale=0.081]{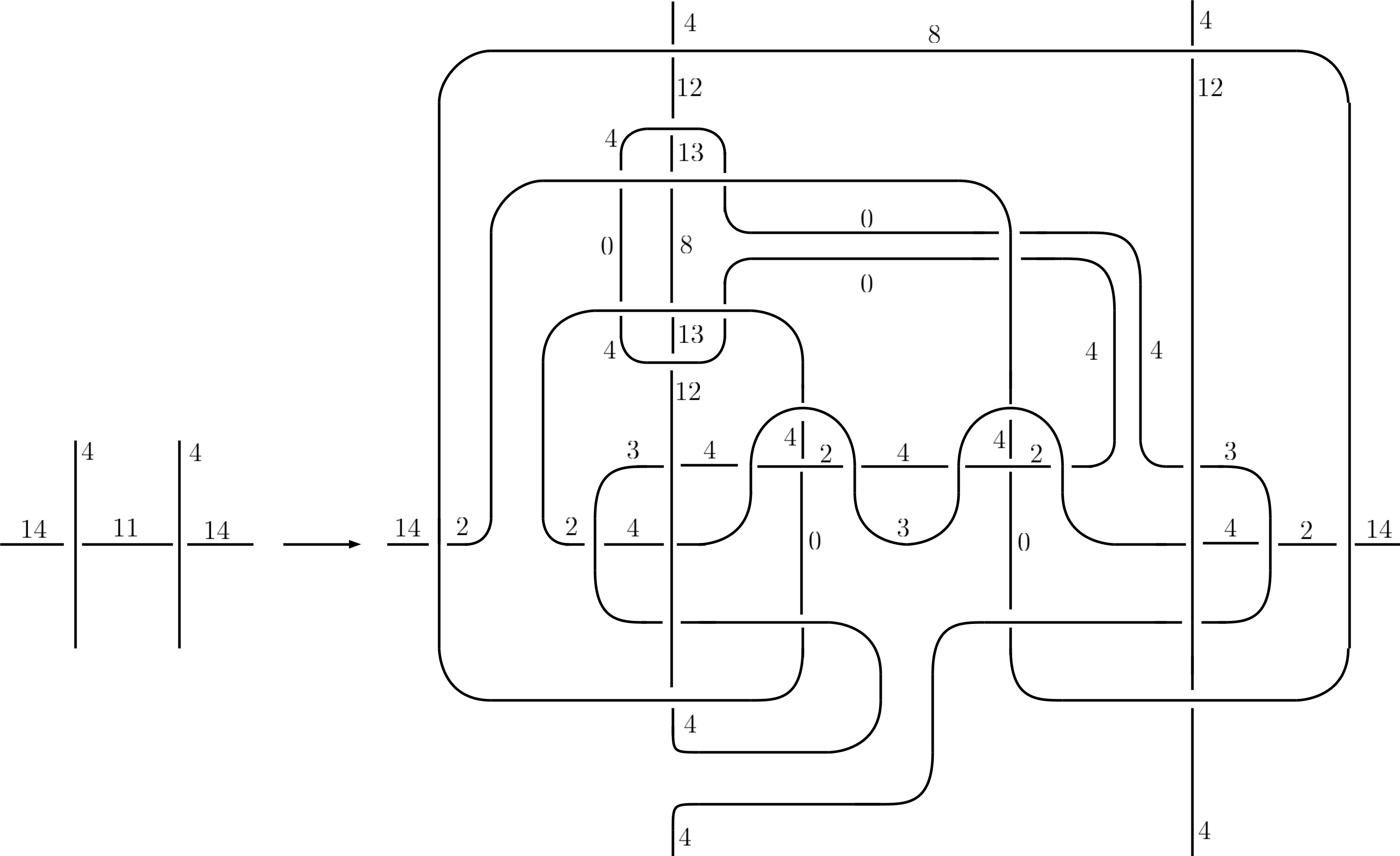} 
	\caption{\label{Fig.75}}
\end{figure}

\begin{figure}[H]
\centering
	\includegraphics[scale=0.081]{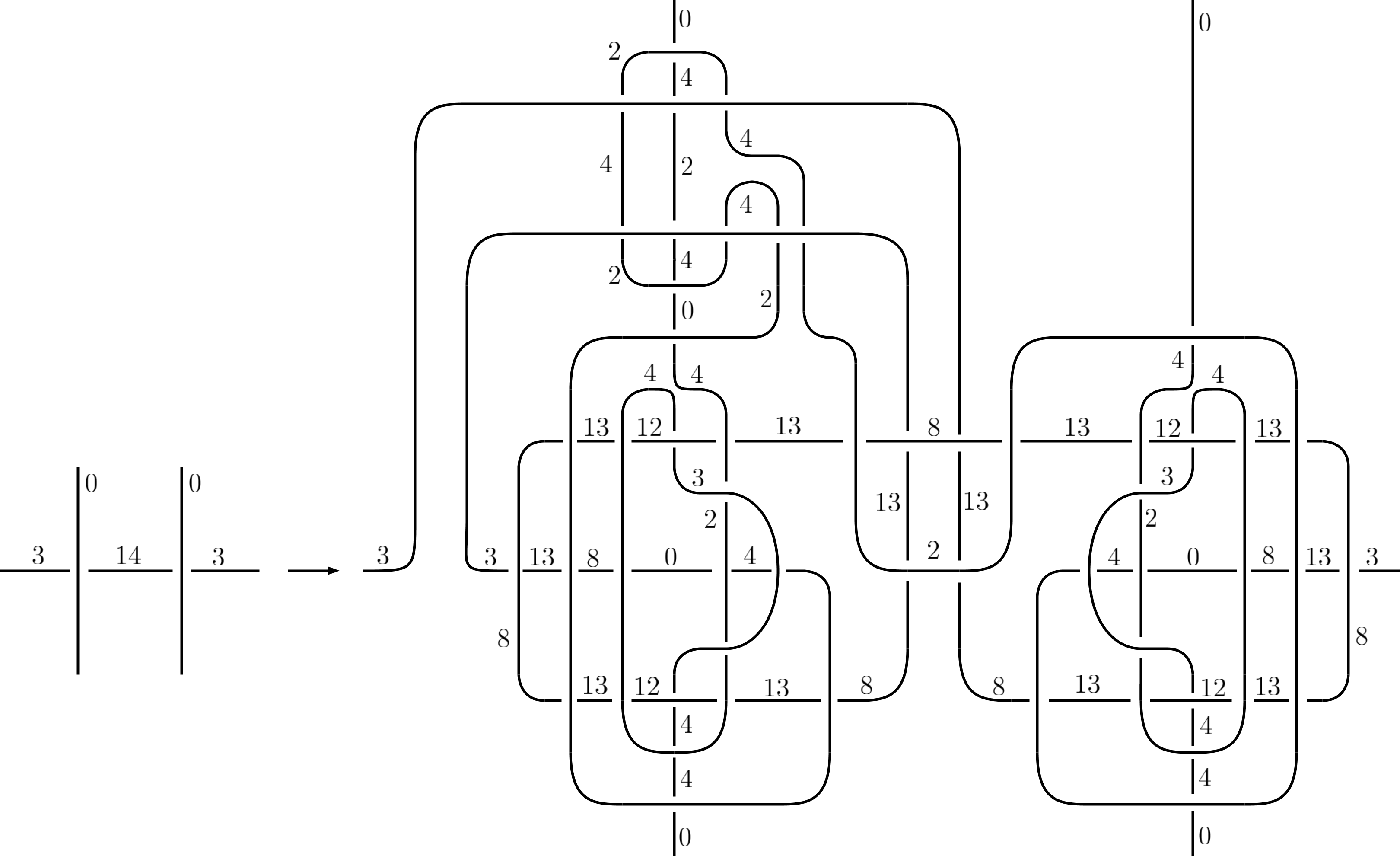} 
	\caption{\label{Fig.76}}
\end{figure}

\begin{figure}[H]
\centering
	\includegraphics[scale=0.081]{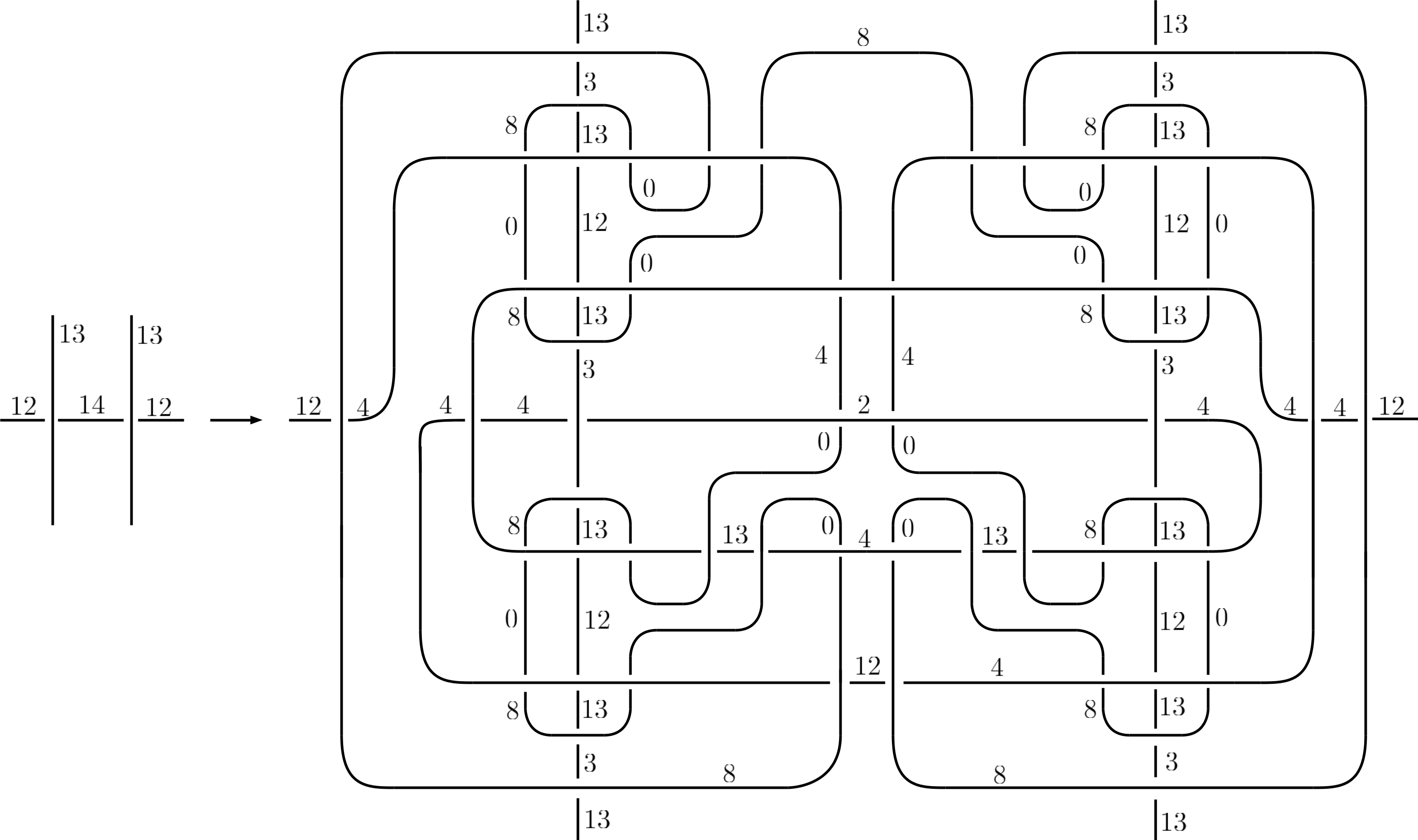} 
	\caption{\label{Fig.77}}
\end{figure}

\begin{figure}[H]
\centering
	\includegraphics[scale=0.11]{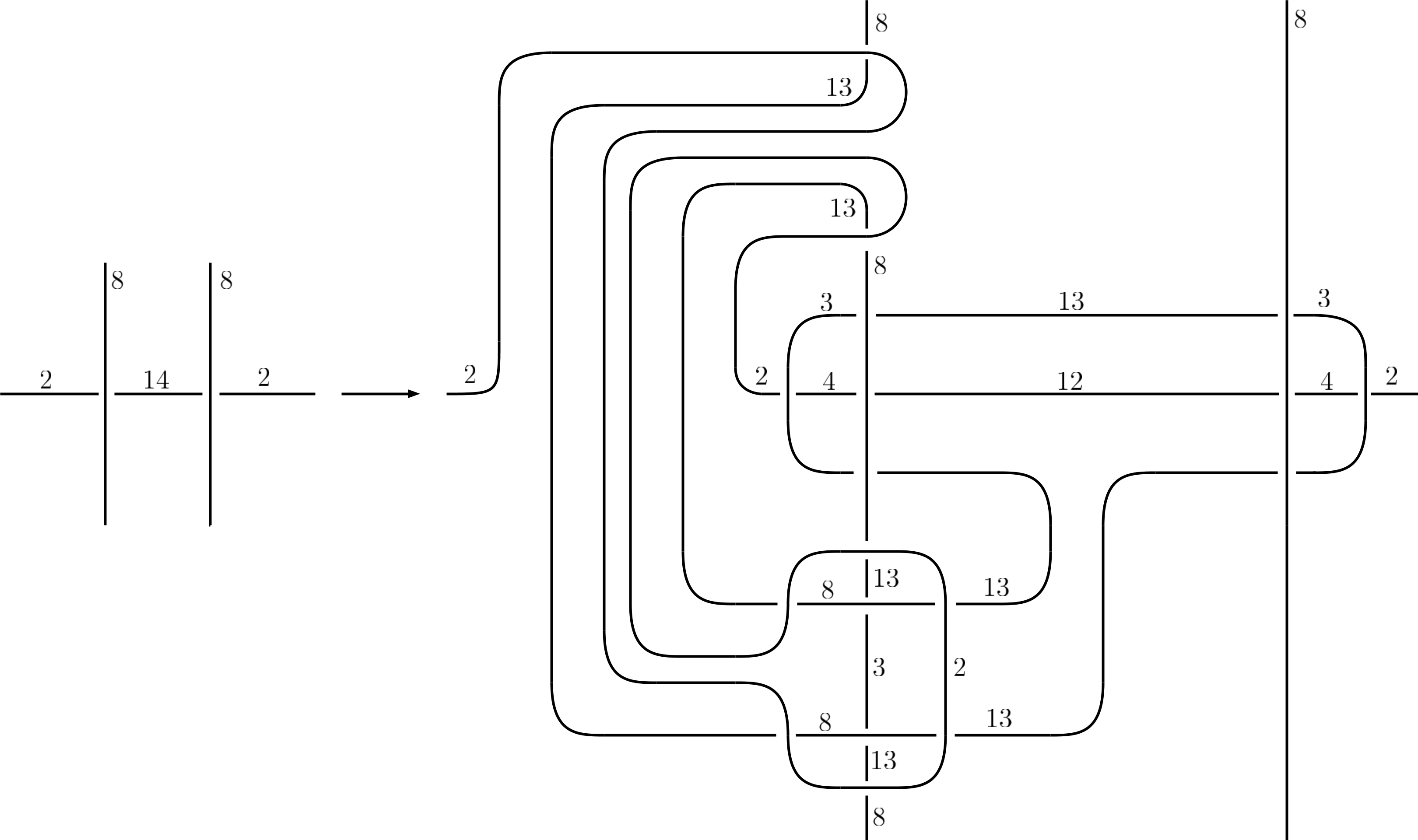} 
	\caption{\label{Fig.78}}
\end{figure}
\begin{figure}[H]
\centering
	\includegraphics[scale=0.12]{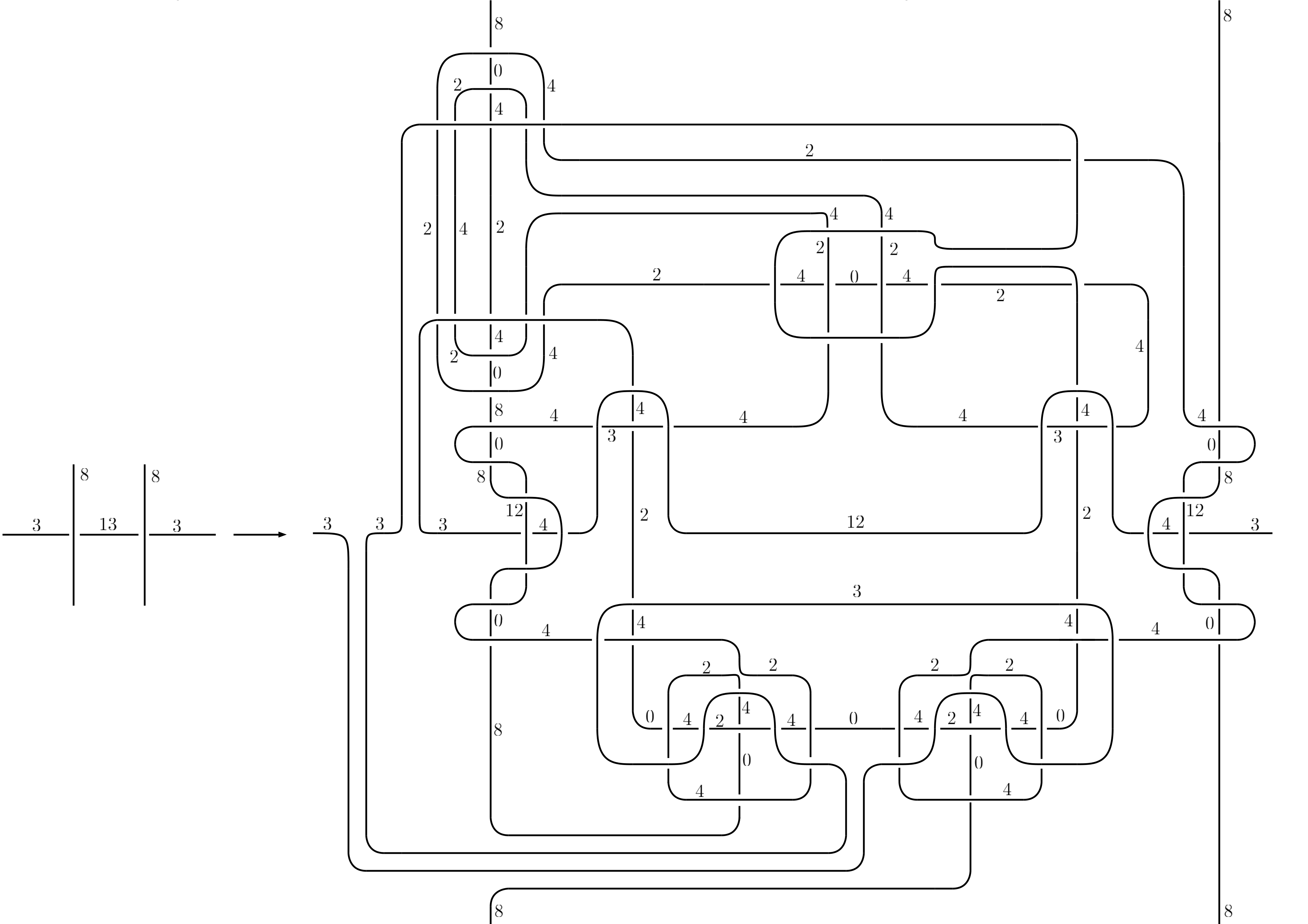} 
	\caption{\label{Fig.85}}
\end{figure}
\begin{figure}[H]
\centering
	\includegraphics[scale=0.096]{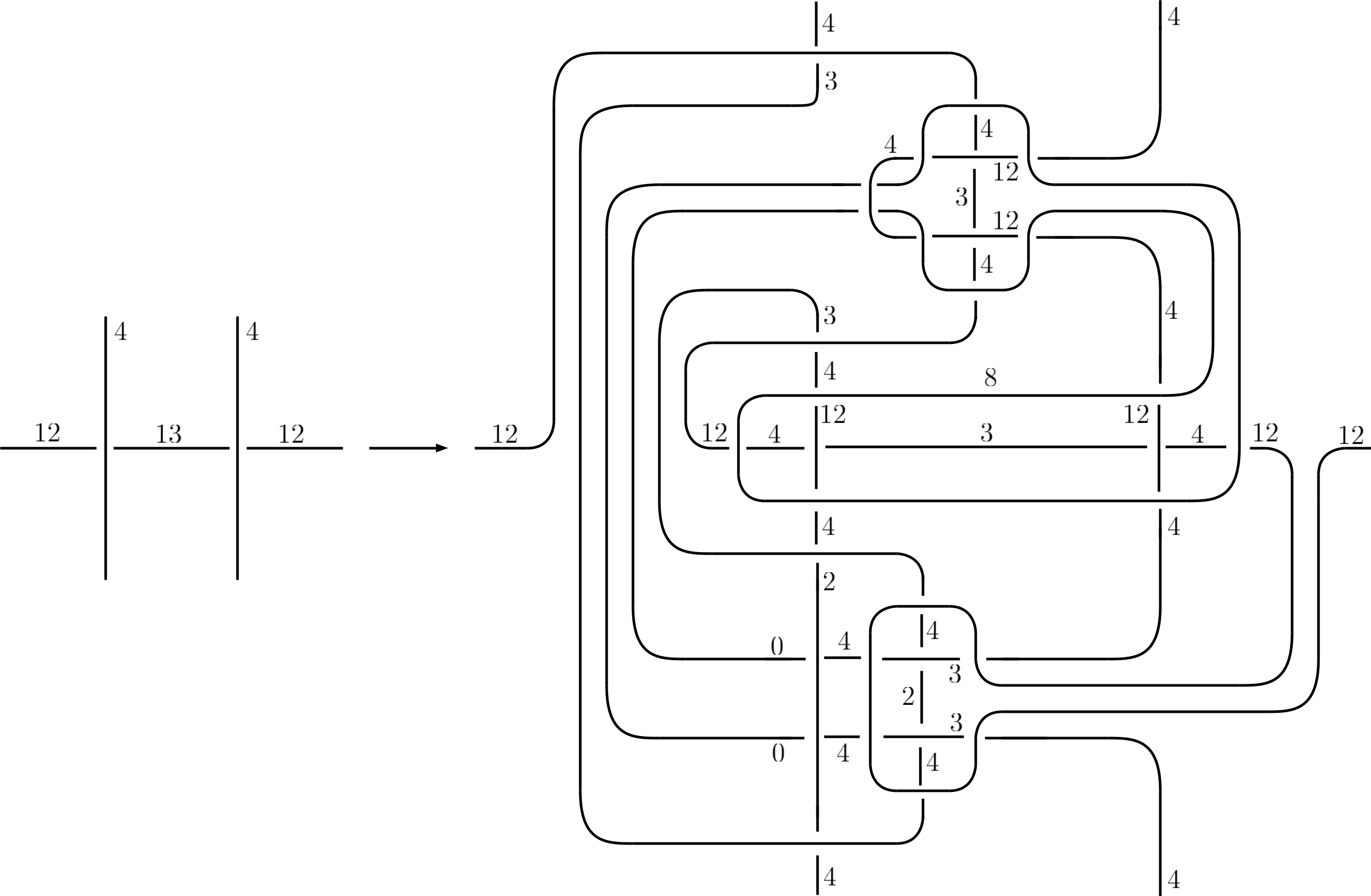} 
	\caption{\label{Fig.84}}
\end{figure}
\begin{figure}[H]
\centering
	\includegraphics[scale=0.089]{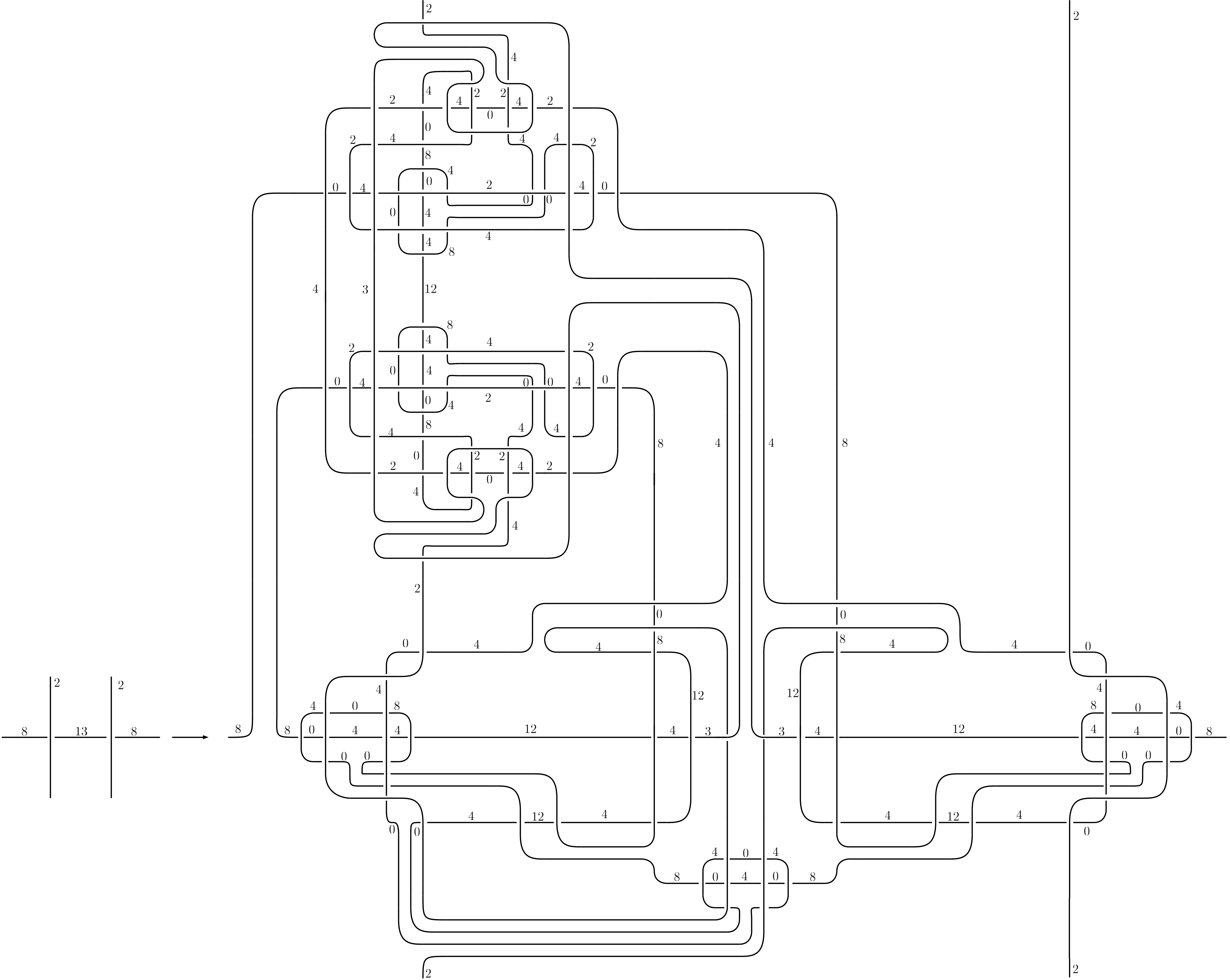} 
	\caption{\label{Fig.83}}
\end{figure}
\normalsize{
\underline{\textbf{If $a\neq b$}}, we do the deformation described in Figure~\ref{Fig.115}. We get the two new colors $2a-b$ and $2a-2b+c$. They are different from $c$ and $c_k$ iff $b\neq 2a-c$, $b\neq 2a-c_k$ and $b\ne a+9c-9c_k$, for each $k$, $1\le k\le i-1$. Then the color $c$ disappears and none of the the colors $c_k$ appears.\\
Now if $b=2a-c$ or $b=2a-c_k$ or $b=a+9c-9c_k$, then we apply to $D_{i-1}$ the transformation shown in Figure~\ref{Fig.116}. We obtain the new colors $2b-a$ and $2b-2a+c$. They are different from $c$, $c_k$ and $c_l$ where $l \neq k$ for each $l$,$k$, $1\le l,k\le i-1$,  iff $(a,b)$ is distinct from $(6c_k+12c, 12c_k+6c)$, $(6c+12c_k, 12c+6c_k)$, $(10c_k+8c, 2c_k-c)$, $(2c_k-c, 10c_k+8c)$, $(6c_l+12c_k,12c_l+6c_k)$, $(c_l+c_k-c,c_l+9c_k+8c)$ and $(9c_l+c_k+8c,c_l+c_k-c)$. when $(a,b)$ is one of those pairs, we will apply to the diagram $D_{i-1}$ different deformations which will be indicated in the following tables. Finally we get a diagram $D_i$ equivalent to $D_{i-1}$ in which no arc has the color $c_i$.\\
We remark that in all those cases, the colors $a$ and $b$ play symmetric roles. Then the adequate figures are similar. In such cases, we fill just one box in the table and the other is left blank. For example, in the first table, when $(c,c_k)=(15,16)$, we get $(a,b)=(4,10)$ and $(a,b)=(10,4)$. The deformation in Figure~\ref{Fig.145} allows to resolve the problem in the two cases in a similar way.}

\begin{center}
  \scriptsize{
\begin{tabular}{|D{0.5cm}|D{1cm}|D{3.3cm}|D{1.8cm}|D{3.5cm}|D{1.8cm}|}\hline
Step&$(c,c_k)$ & $(a,b)=(6c_k+12c, 12 c_k+6c)$& Required deformation & $(a,b)=(6c+12c_k, 12c+6c_k)$ & Required deformation \tabularnewline\hline
$2$&$(15,16)$ & $(4,10)$ && $(10,4)$ & Fig. \ref{Fig.145}  \tabularnewline\hline
$3$&$(9,16)$ & $(0,8)$ && $(8,0)$&Fig. \ref{Fig.145}  \tabularnewline\cline{2-6}
&$(9,15)$ & $(11,13)$ && $(13,11)$& Fig. \ref{Fig.145}  \tabularnewline\hline
$4$&$(10,16)$ & $(12,14)$ && $(14,12)$&Fig. \ref{Fig.86}  \tabularnewline\cline{2-6}
&$(10,15)$ & $(6,2)$ && $(2,6)$ &Fig. \ref{Fig.146} \tabularnewline\hline
$5$&$(6,10)$ & $(13,3)$ && $(3,13)$& Fig. \ref{Fig.145} \tabularnewline\hline
$6$&$(7,15)$ & $(4,1)$ &Fig. \ref{Fig.148}& $(1,4)$ &\tabularnewline\cline{2-6}
&$(7,9)$ & $(2,14)$ && $(14,2)$&Fig. \ref{Fig.146} \tabularnewline\cline{2-6}
&$(7,6)$ & $(1,12)$ && $(12,1)$&Fig. \ref{Fig.88} \tabularnewline\hline
$7$&$(5,16)$ & $(3,1)$ && $(1,3)$&Fig. \ref{Fig.f46}  \tabularnewline\cline{2-6}
&$(5,6)$ & $(11,0)$ && $(0,11)$&Fig. \ref{Fig.93} \tabularnewline\cline{2-6}
&$(5,7)$ & $(0,12)$ && $(12,0)$ &Fig. \ref{Fig.94}\tabularnewline\hline
$9$&$(11,7)$ & $(4,14)$ && $(14,4)$&Fig. \ref{Fig.99} \tabularnewline\hline
$11$&$(13,14)$ & $(2,8)$ &Fig. \ref{Fig.103}& $(8,2)$& \tabularnewline\hline
\end{tabular}
}
\captionof{table}{\scriptsize{Table of $(a,b)=(6c_k+12c, 12 c_k+6c)$ or $(a,b)=(6c+12c_k, 12c+6c_k)$.}}
\end{center}
\begin{center}
\scriptsize{
\begin{tabular}{|D{0.5cm}|D{1cm}|D{3cm}|D{1.5cm}|D{3cm}|D{1.5cm}|}\hline
Step&$(c,c_k)$ & $(a,b)=(10c_k+8c, 2c_k-c)$& Required deformation & $(a,b)=(2c_k-c, 10c_k+8c)$ & Required deformation \tabularnewline\hline
$2$&$(15,16)$ & $(8,0)$ && $(0,8)$ & Fig. \ref{Fig.145}  \tabularnewline\hline
$3$&$(9,16)$ & $(11,6)$ && $(6,11)$& Fig. \ref{Fig.145}  \tabularnewline\hline
$4$&$(10,16)$ & $(2,5)$ && $(5,2)$ &Fig. \ref{Fig.147} \tabularnewline\cline{2-6}
&$(10,9)$ & $(0,8)$ && $(8,0)$& Fig. \ref{Fig.145} \tabularnewline\hline
$5$&$(6,10)$ & $(12,14)$ &Fig. \ref{Fig.145}& $(14,12)$ &\tabularnewline\hline
$6$&$(7,6)$ & $(14,5)$ &Fig. \ref{Fig.145}& $(5,14)$& \tabularnewline\hline
$7$&$(5,15)$ & $(3,8)$ && $(8,3)$&Fig. \ref{Fig.145} \tabularnewline\cline{2-6}
&$(5,9)$ & $(11,13)$ && $(13,11)$&Fig. \ref{Fig.145} \tabularnewline\hline
\end{tabular}
}
\captionof{table}{\scriptsize{Table of $(a,b)=(10c_k+8c, 2c_k-c)$ or $(a,b)=(2c_k-c, 10c_k+8c)$.}}
\end{center}
\begin{center}\scriptsize{
\begin{tabular}{|D{0.4cm}|D{1.2cm}|D{3cm}|D{1cm}||D{1.2cm}|D{3cm}|D{1cm}|}\hline
\scriptsize{Step}&$(c,c_k,c_l)$ & \scriptsize{$(a,b)=(9c_l+c_k+8c, c_l+c_k-c)$}& \scriptsize{Required deformation} & $(c,c_k,c_l)$& $(a,b)=(c_l+c_k-c,c_l+9c_k+8c)$& \scriptsize{Required deformation}  \tabularnewline\hline
$3$&$(9,16,15)$ & $(2,5)$& & $(9,15,16)$ & $(5,2)$& Fig. \ref{Fig.145}  \tabularnewline\cline{2-7}
&$(9,15,16)$ & $(10,5)$ &Fig. \ref{Fig.145}& $(9,16,15)$ & $(5,10)$&  \tabularnewline\hline
$4$&$(10,9,15)$ & $(3,14)$ && $(10,15,9)$ & $(14,3)$ &Fig. \ref{Fig.147} \tabularnewline\cline{2-7}
&$(10,15,9)$ & $(6,14)$ && $(10,9,15)$ & $(14,6)$ &Fig. \ref{Fig.147} \tabularnewline\hline
$5$&$(6,16,10)$ & $(1,3)$ && $(6,10,16)$ & $(3,1)$&Fig. \ref{Fig.147}  \tabularnewline\cline{2-7}
&$(6,9,15)$ & $(5,1)$ && $(6,15,9)$ &$(1,5)$& Fig. \ref{Fig.145} \tabularnewline\hline
$6$&$(7,9,16)$ & $(5,1)$&Fig. \ref{Fig.86}& $(7,16,9)$ & $(1,5)$& \tabularnewline\cline{2-7}
&$(7,10,15)$ & $(14,1)$&Fig. \ref{Fig.89}&$(7,15,10)$ & $(1,14)$ & \tabularnewline\cline{2-7}
&$(7,9,10)$ & $(2,12)$ && $(7,10,9)$ & $(12,2)$&Fig. \ref{Fig.90} \tabularnewline\hline
$7$&$(5,16,7)$ & $(0,1)$ && $(5,7,16)$ & $(1,0)$& Fig. \ref{Fig.91}\tabularnewline\cline{2-7}
&$(5,7,16)$ & $(4,1)$ & &$(5,16,7)$ & $(1,4)$& Fig. \ref{Fig.95}\tabularnewline\cline{2-7}
&$(5,16,6)$ & $(8,0)$ &Fig. \ref{Fig.92}& $(5,6,16)$ & $(0,8)$& \tabularnewline\cline{2-7}
&$(5,6,16)$ & $(3,0)$ && $(5,16,6)$ & $(0,3)$ &Fig. \ref{Fig.91}\tabularnewline\cline{2-7}
&$(5,7,10)$ & $(1,12)$ &Fig. \ref{Fig.96}& $(5,10,7)$ & $(12,1)$& \tabularnewline\cline{2-7}
&$(5,10,7)$ & $(11,12)$ && $(5,7,10)$ & $(12,11)$&Fig. \ref{Fig.97} \tabularnewline\hline
$8$&$(1,9,5)$ & $(11,13)$ && $(1,5,9)$ & $(13,11)$ &Fig. \ref{Fig.98}\tabularnewline\hline
$9$&$(11,5,9)$ & $(4,3)$ && $(11,9,5)$ & $(3,4)$& Fig. \ref{Fig.146} \tabularnewline\cline{2-7}
&$(11,9,16)$ & $(3,14)$ && $(11,16,9)$ &  $(14,3)$&Fig. \ref{Fig.147} \tabularnewline\cline{2-7}
&$(11,10,15)$ & $(12,14)$ && $(11,15,10)$ & $(14,12)$&Fig. \ref{Fig.100}\tabularnewline\hline
$10$&$(14,16,15)$ & $(8,0)$ &Fig. \ref{Fig.101}& $(14,15,16)$ &  $(0,8)$&\tabularnewline\cline{2-7}
&$(14,9,5)$ & $(13,0)$ && $(14,5,9)$ &  $(0,13)$&Fig. \ref{Fig.147}\tabularnewline\hline
$11$&$(13,16,14)$ & $(8,0)$ &&$(13,14,16)$ &  $(0,8)$& Fig. \ref{Fig.146} \tabularnewline\cline{2-7}
&$(13,6,9)$ & $(4,2)$ &Fig. \ref{Fig.102}& $(13,9,6)$ &  $(2,4)$&\tabularnewline\hline
\end{tabular}
}
\captionof{table}{\scriptsize{Table of $(a,b)=(9c_l+c_k+8c, c_l+c_k-c)$ or $(a,b)=(c_l+c_k-c,c_l+9c_k+8c)$.}}
\end{center}
\begin{center}
  \scriptsize{
\begin{tabular}{|D{0.5cm}|D{3cm}|D{3.5cm}|D{3cm}|}\hline
Step&$(c,c_k,c_l)$ & $(a,b)=(6c_l+12c_k,12c_l+6c_k)$ & Required deformation \tabularnewline\hline
$6$&$(7,10,16)$ & $(12,14)$ & \tabularnewline\cline{2-4}
&$(7,16,10)$ & $(14,12)$& Fig. \ref{Fig.145} \tabularnewline\hline
$7$&$(5,10,6)$ &  $(3,13)$& \tabularnewline\cline{2-4}
&$(5,6,10)$ &  $(13,3)$ & Fig. \ref{Fig.146}\tabularnewline\hline
\end{tabular}
}
\captionof{table}{\scriptsize{Table of $(a,b)=(6c_l+12c_k,12c_l+6c_k)$.}}
\end{center}
\begin{figure}[H]
\centering
	\includegraphics[scale=0.096]{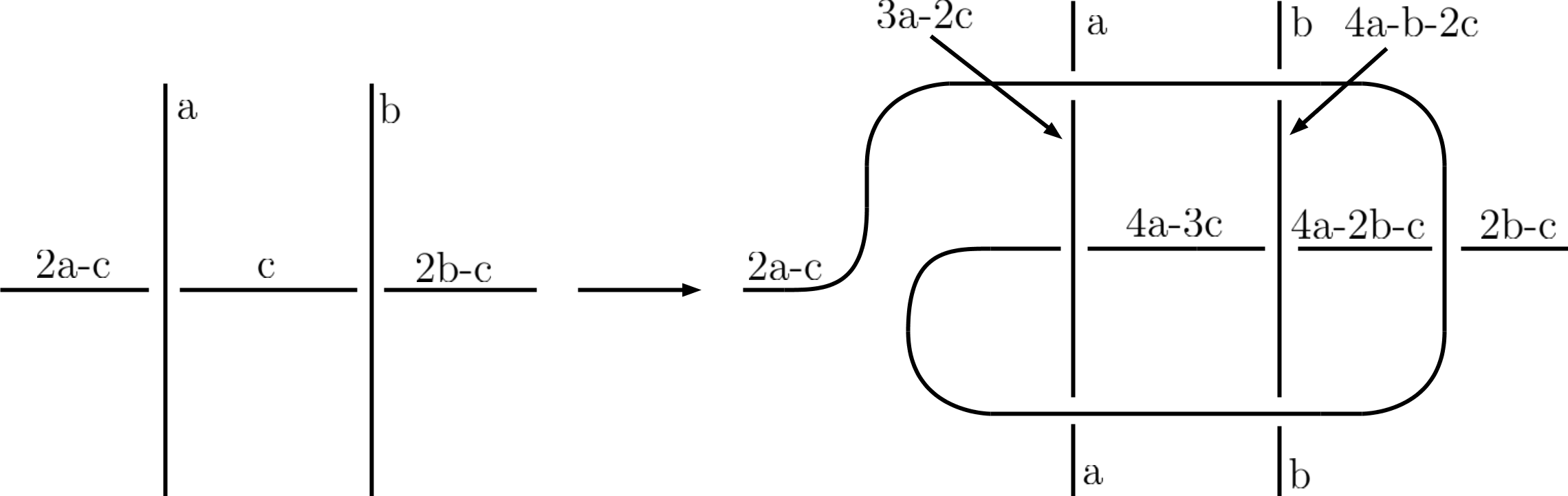} 
	\caption{\label{Fig.145}}
\end{figure}

\begin{figure}[H]
\centering
	\includegraphics[scale=0.096]{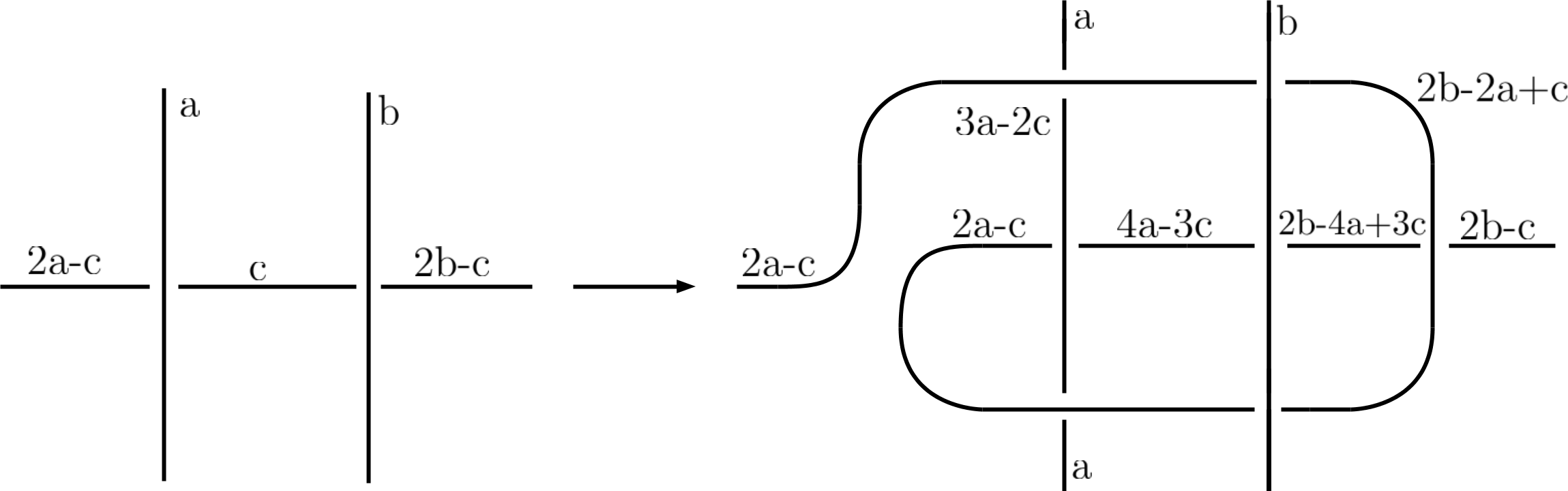} 
	\caption{\label{Fig.146}}
\end{figure}

\begin{figure}[H]
\centering
	\includegraphics[scale=0.12]{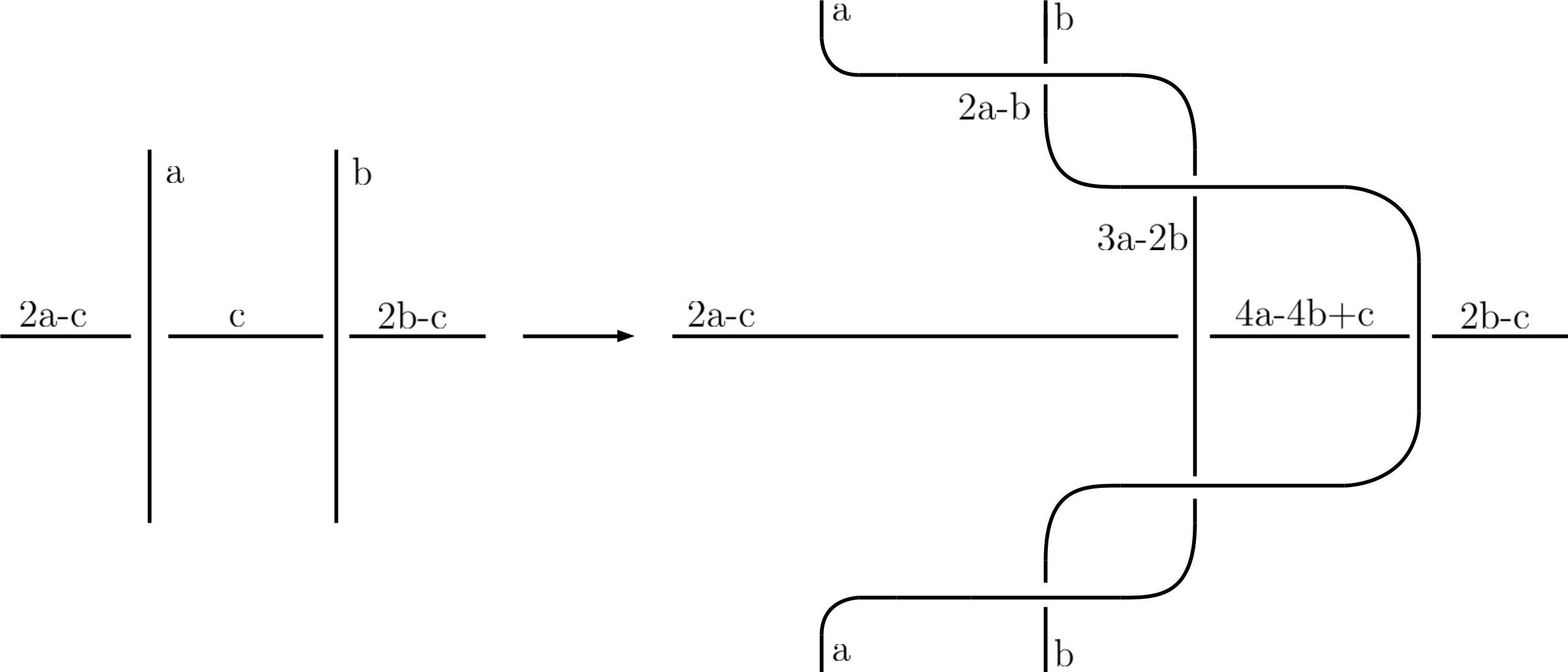} 
	\caption{\label{Fig.147}}
\end{figure}
\begin{figure}[H]
\centering
	\includegraphics[scale=0.12]{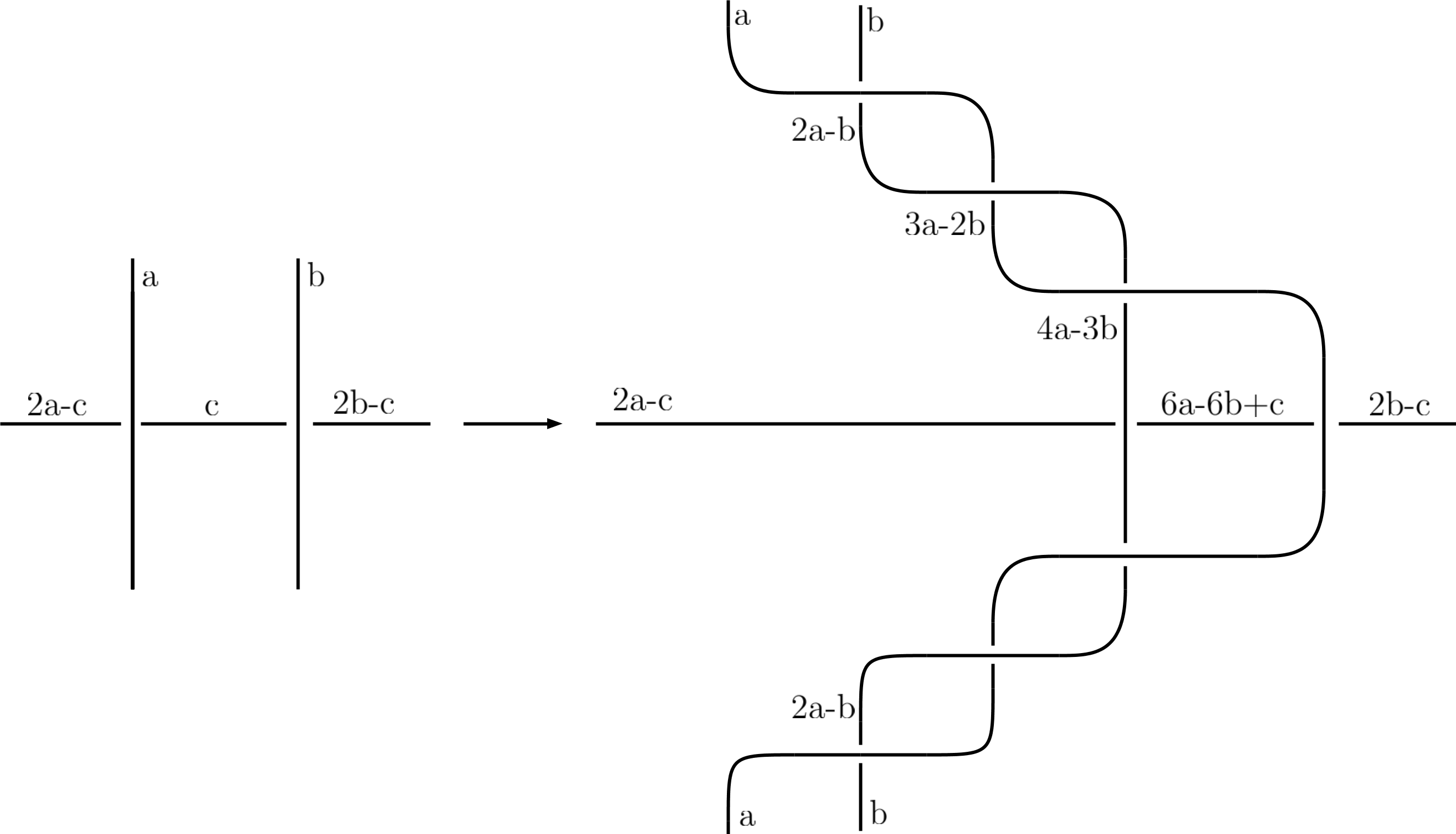} 
	\caption{\label{Fig.91}}
\end{figure}
\begin{figure}[H]
\centering
	\includegraphics[scale=0.12]{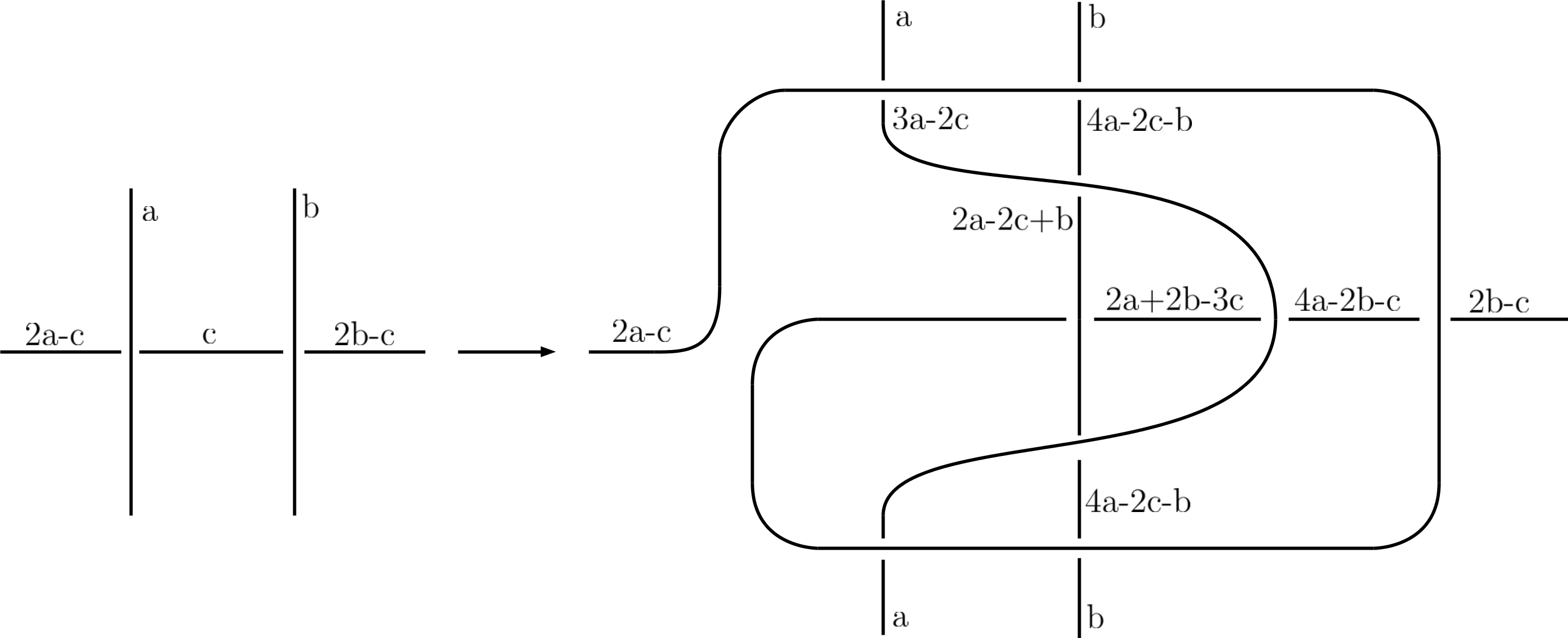} 
	\caption{\label{Fig.86}}	
\end{figure}
\begin{figure}[H]
\centering
	\includegraphics[scale=0.12]{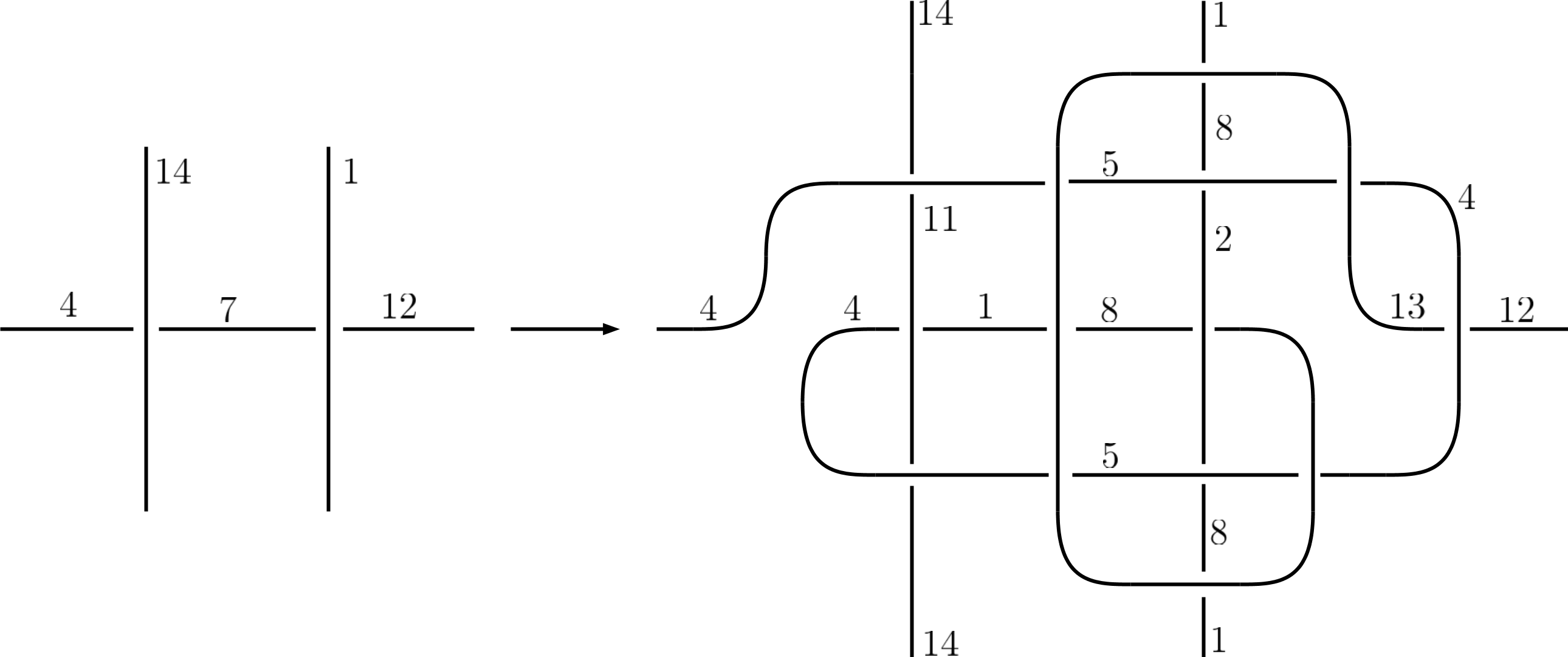} 
	\caption{\label{Fig.89}}
\end{figure}
\begin{figure}[H]
\centering
	\includegraphics[scale=0.12]{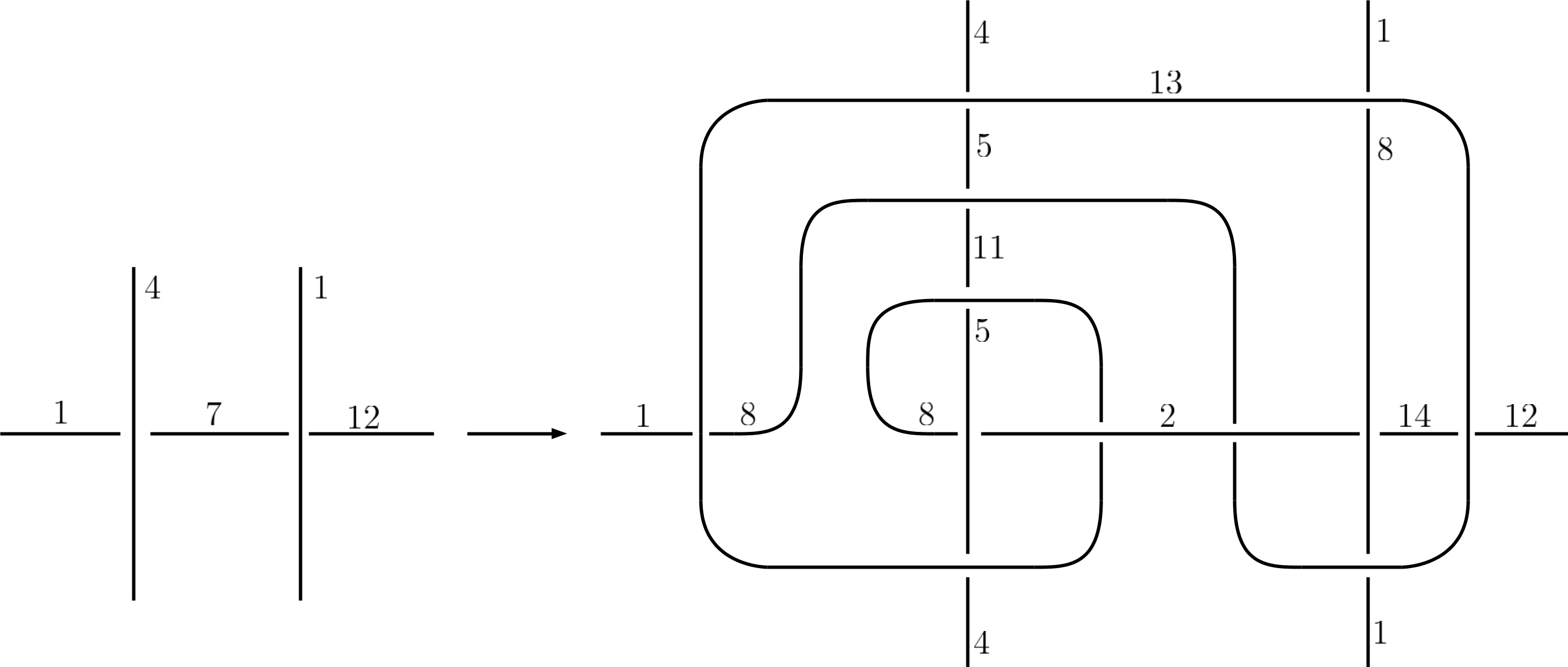} 
	\caption{\label{Fig.148}}
\end{figure}

\begin{figure}[H]
\centering
	\includegraphics[scale=0.12]{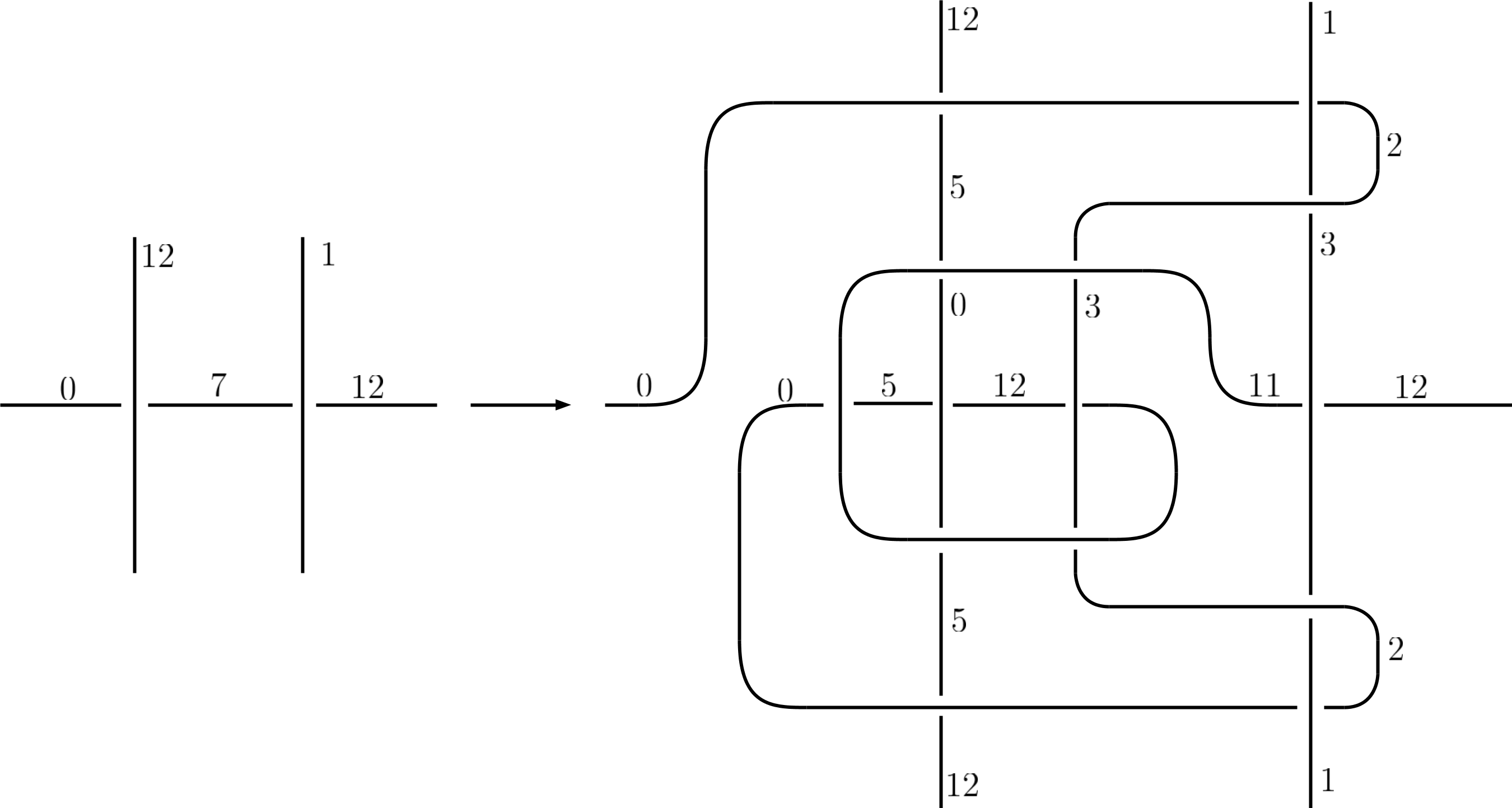} 
	\caption{\label{Fig.88}}
\end{figure}
\begin{figure}[H]
\centering
	\includegraphics[scale=0.11]{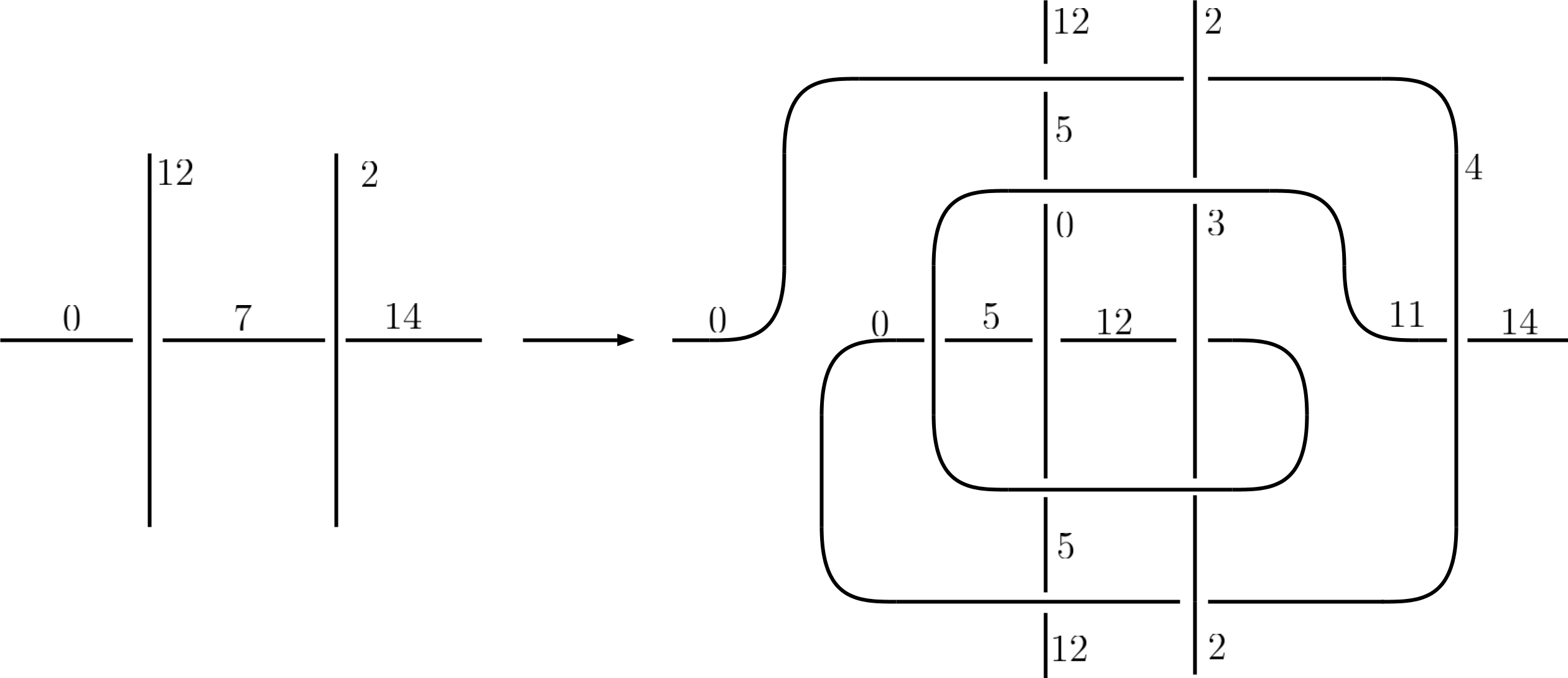} 
	\caption{\label{Fig.90}}
\end{figure}
\begin{figure}[H]
\centering
	\includegraphics[scale=0.11]{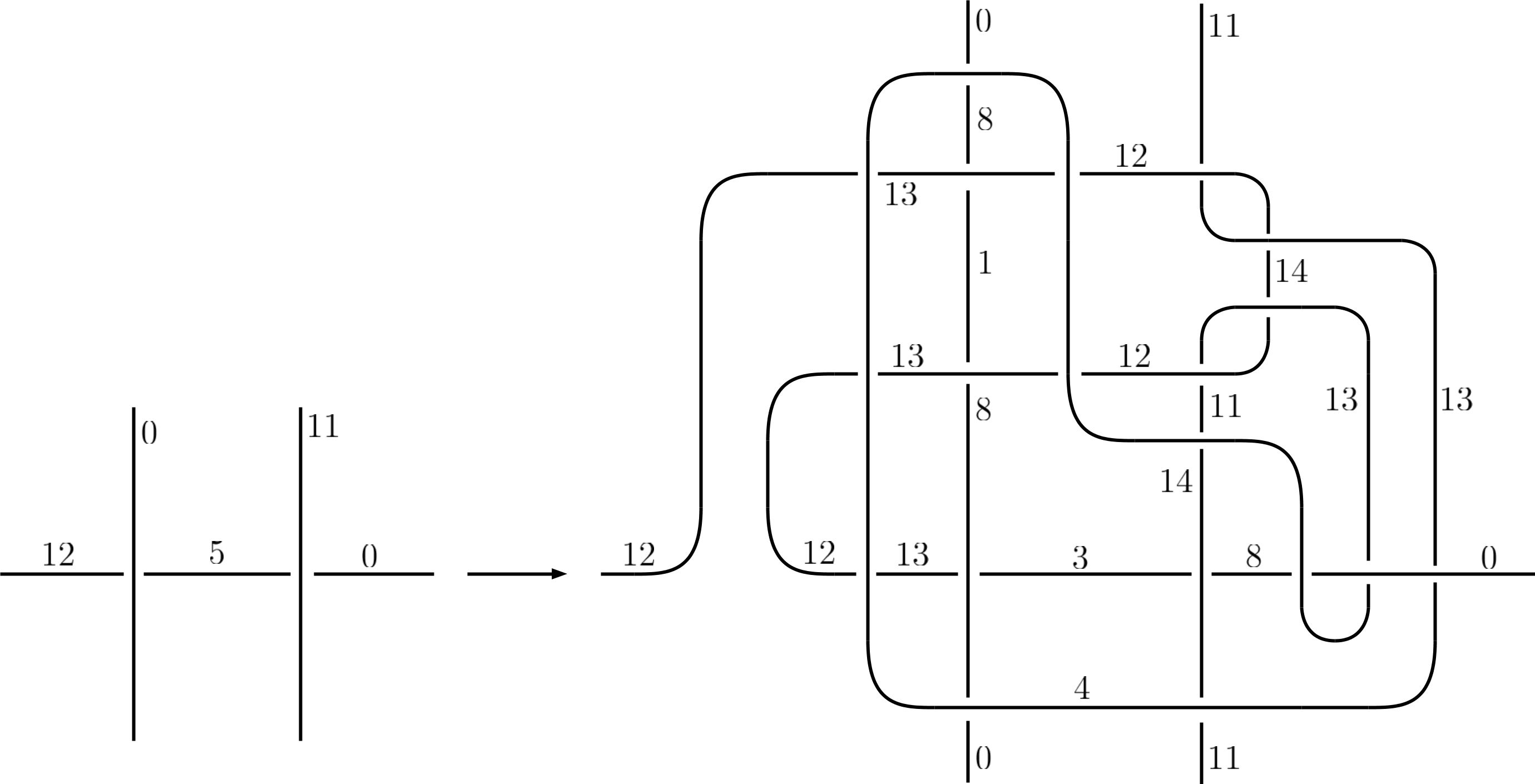} 
	\caption{\label{Fig.93}}
\end{figure}
\begin{figure}[H]
\centering
	\includegraphics[scale=0.1]{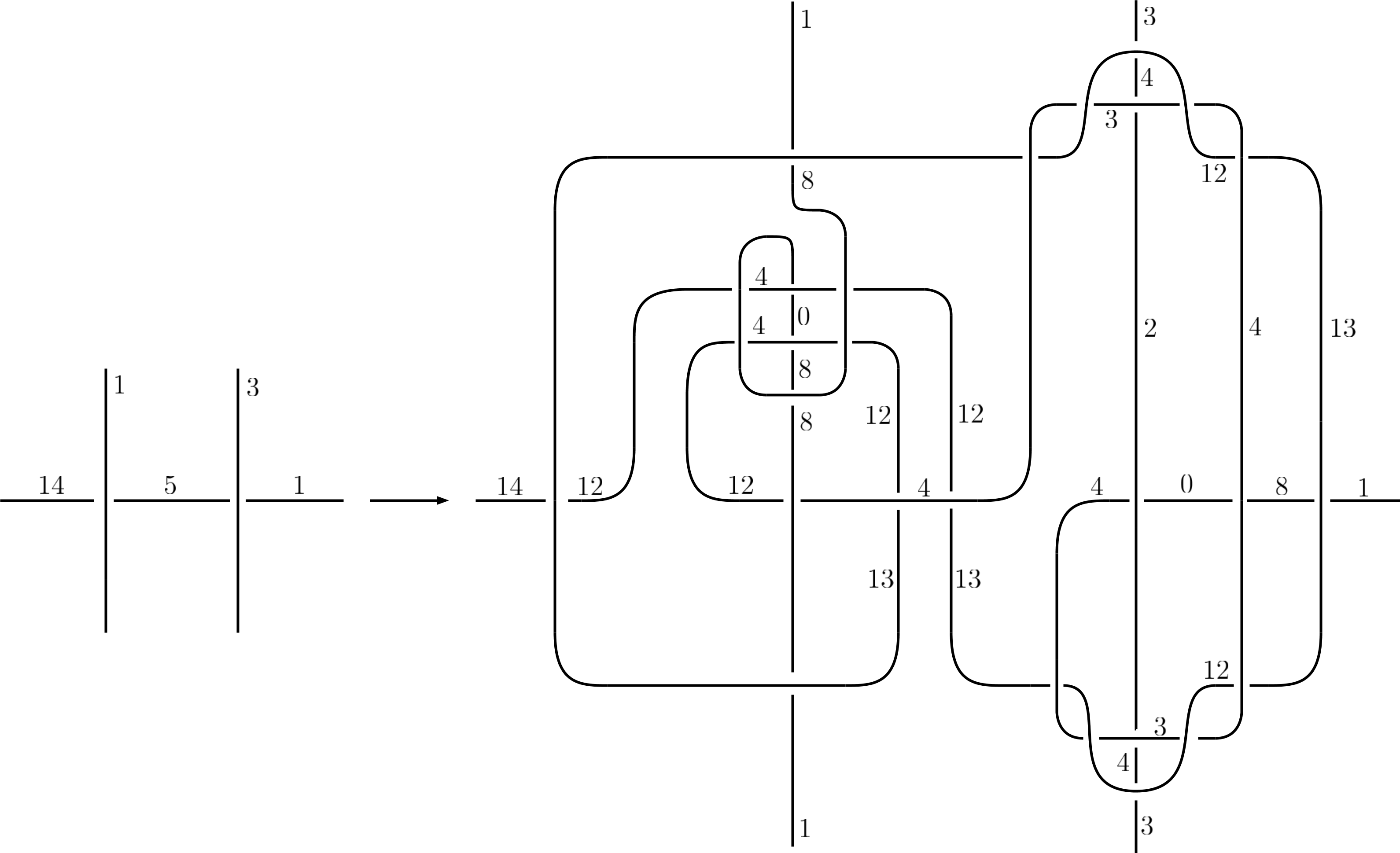} 
	\caption{\label{Fig.f46}}
\end{figure}
\begin{figure}[H]
\centering
	\includegraphics[scale=0.09]{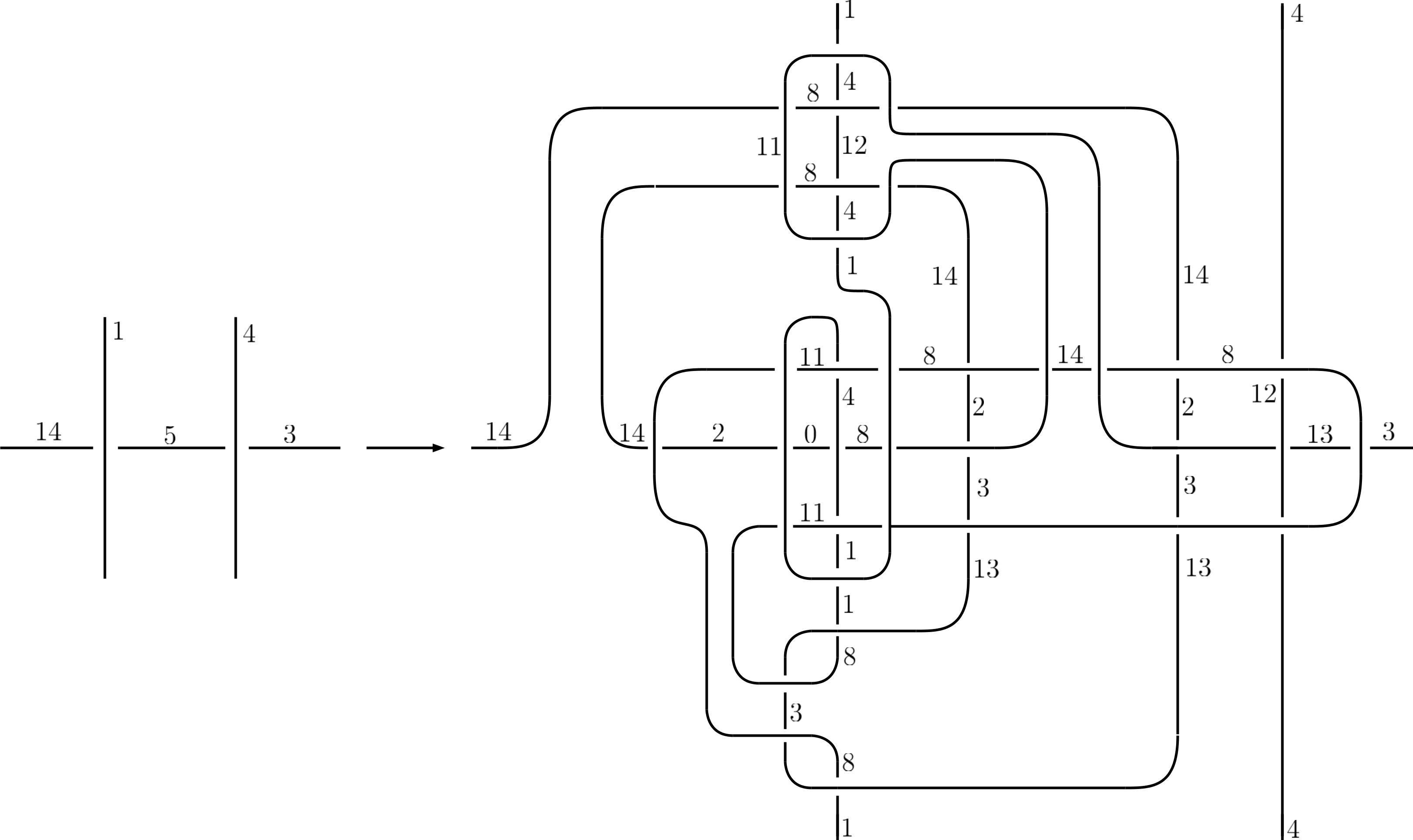} 
	\caption{\label{Fig.95}}
\end{figure}
\begin{figure}[H]
\centering
	\includegraphics[scale=0.09]{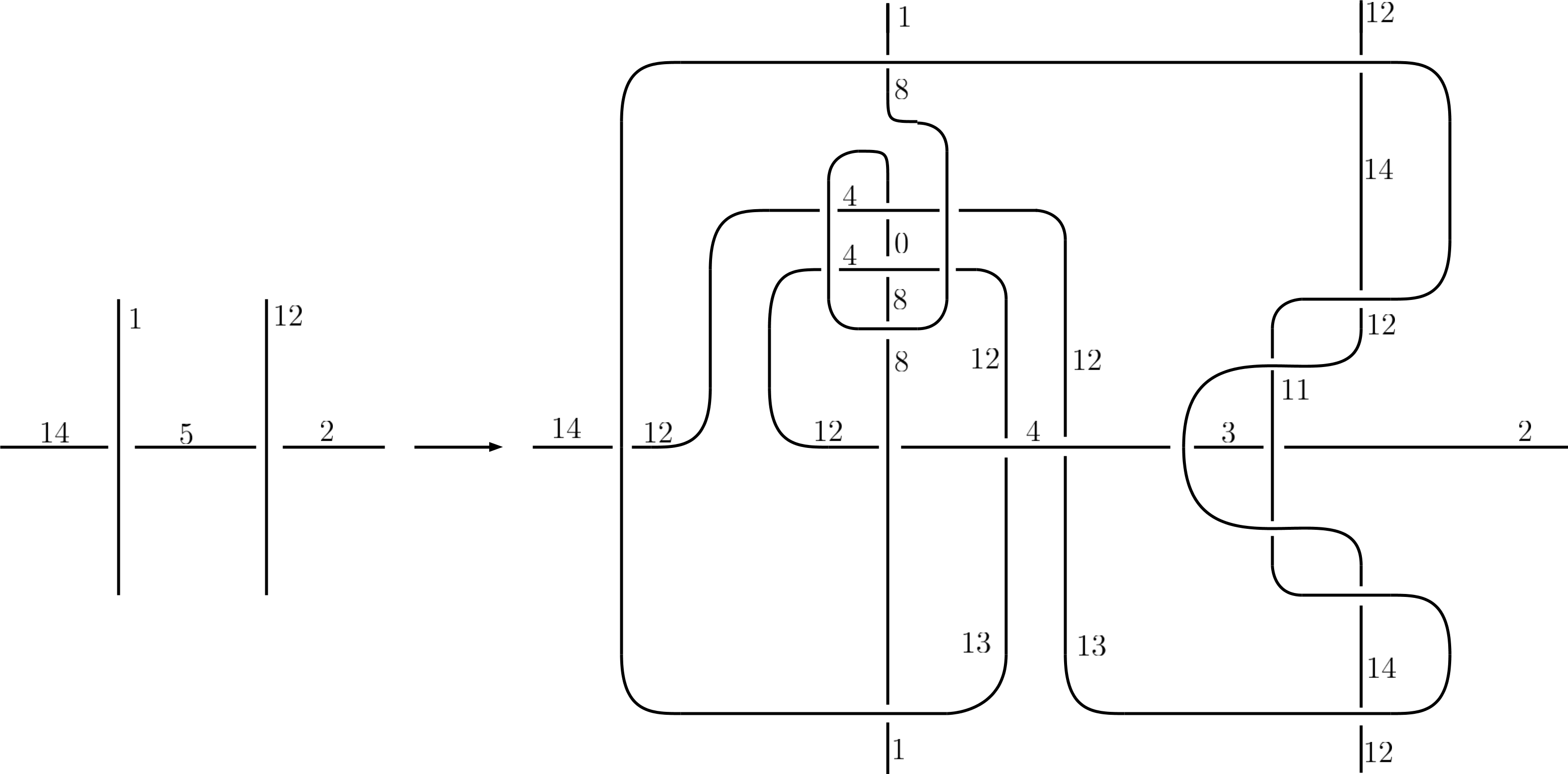} 
	\caption{\label{Fig.96}}
\end{figure}
\begin{figure}[H]
\centering
	\includegraphics[scale=0.09]{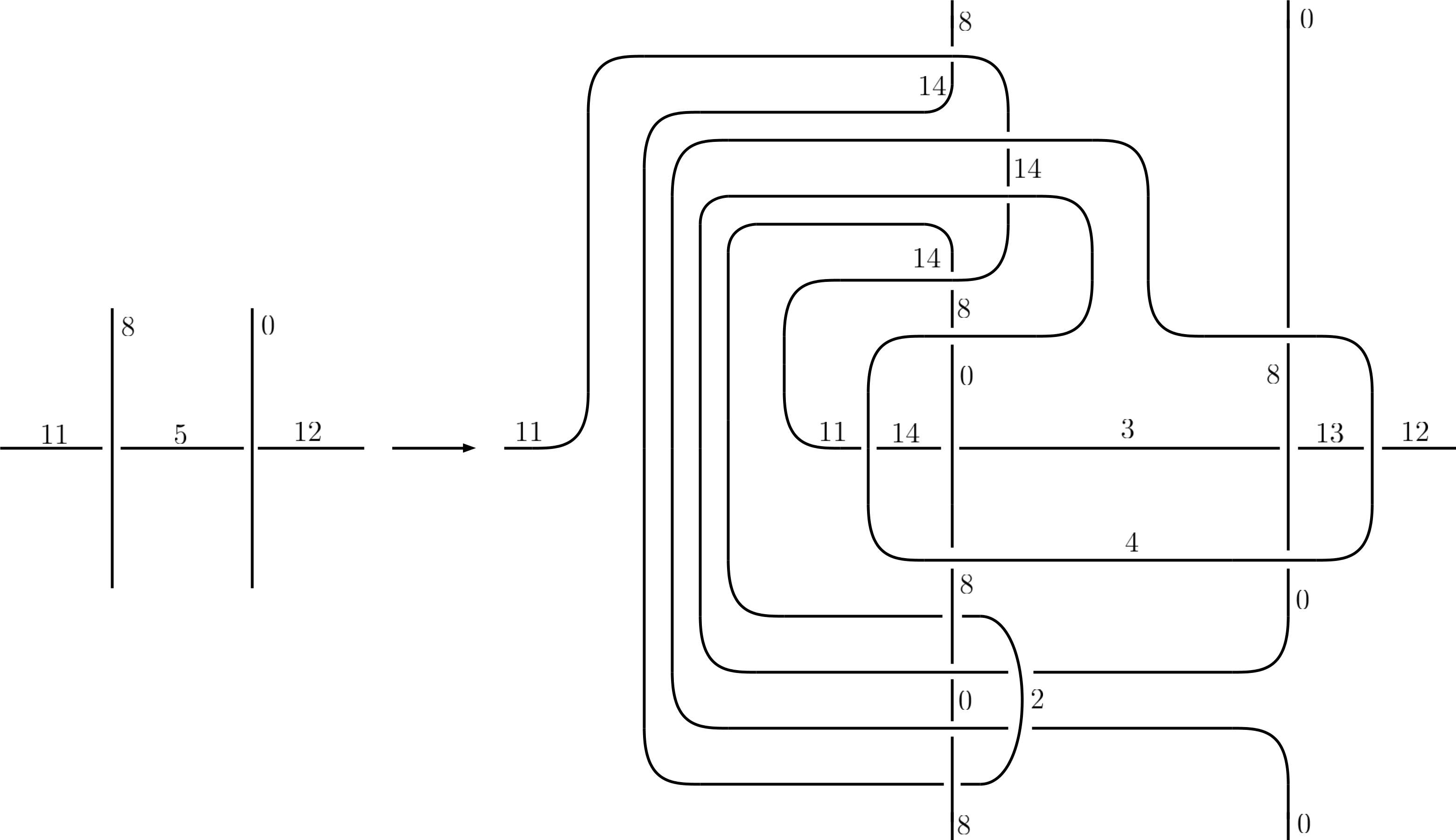} 
	\caption{\label{Fig.92}}
\end{figure}

\begin{figure}[H]
\centering
	\includegraphics[scale=0.09]{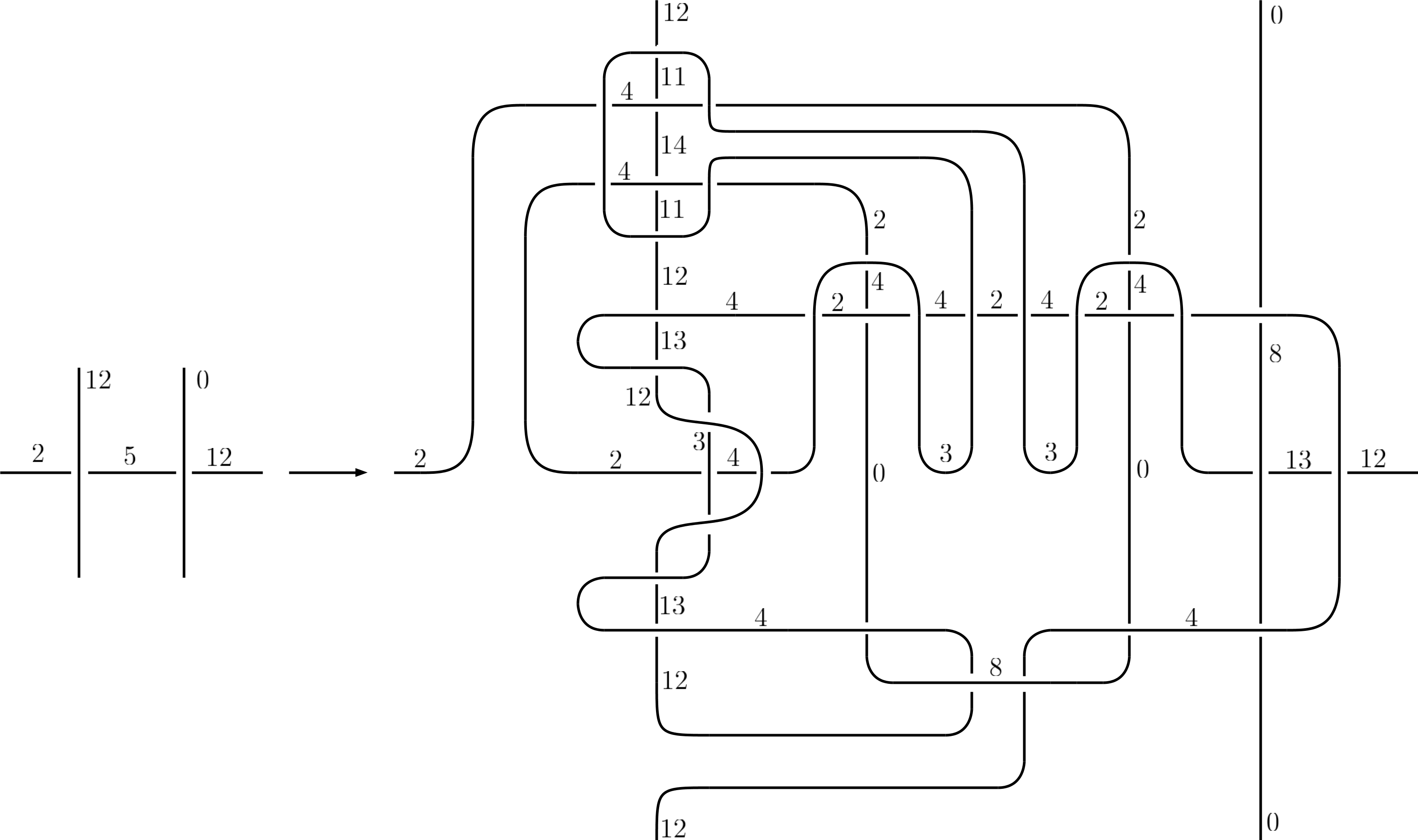} 
	\caption{\label{Fig.94}}
\end{figure}

\begin{figure}[H]
\centering
	\includegraphics[scale=0.09]{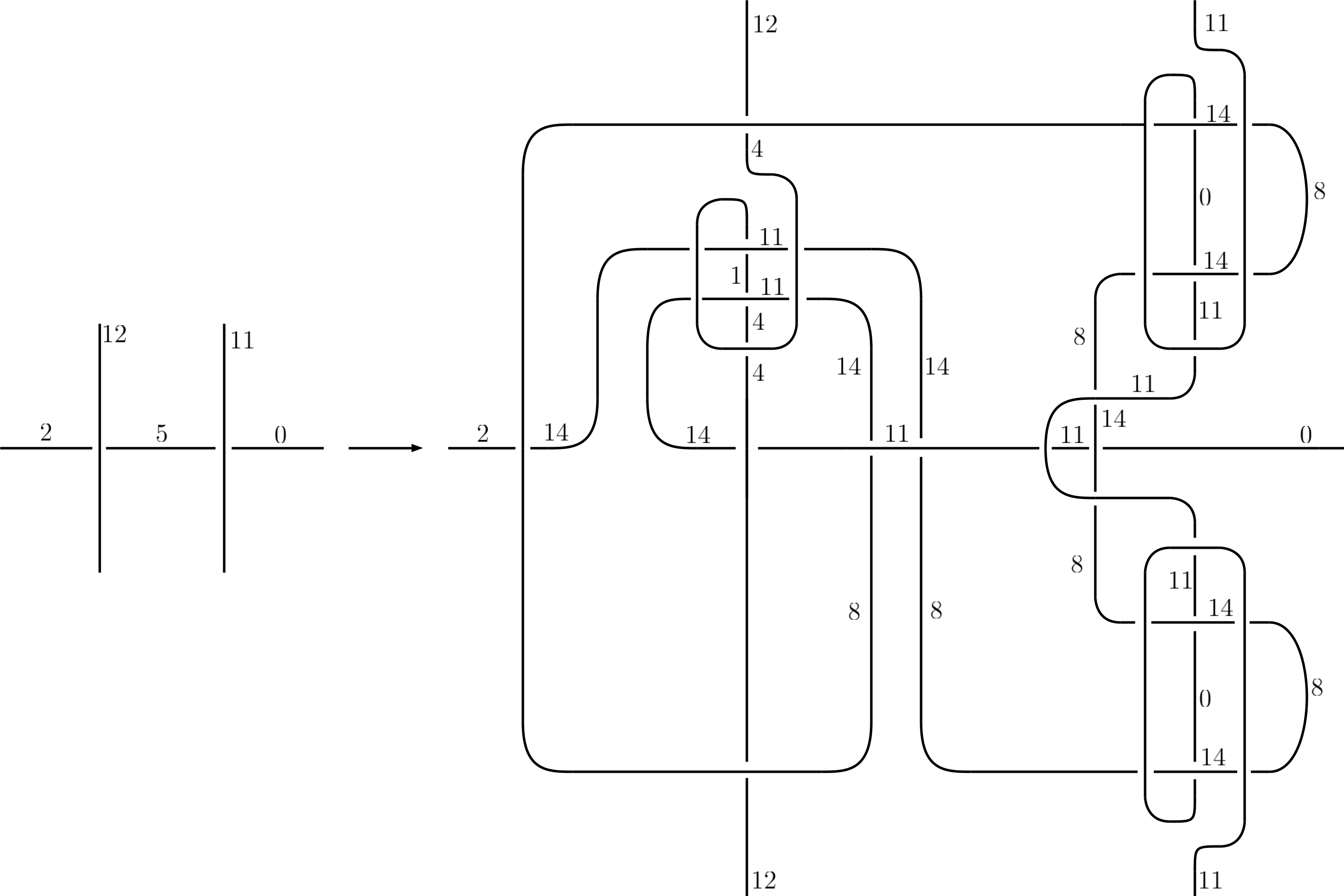} 
	\caption{\label{Fig.97}}
\end{figure}

\begin{figure}[H]
\centering
	\includegraphics[scale=0.1]{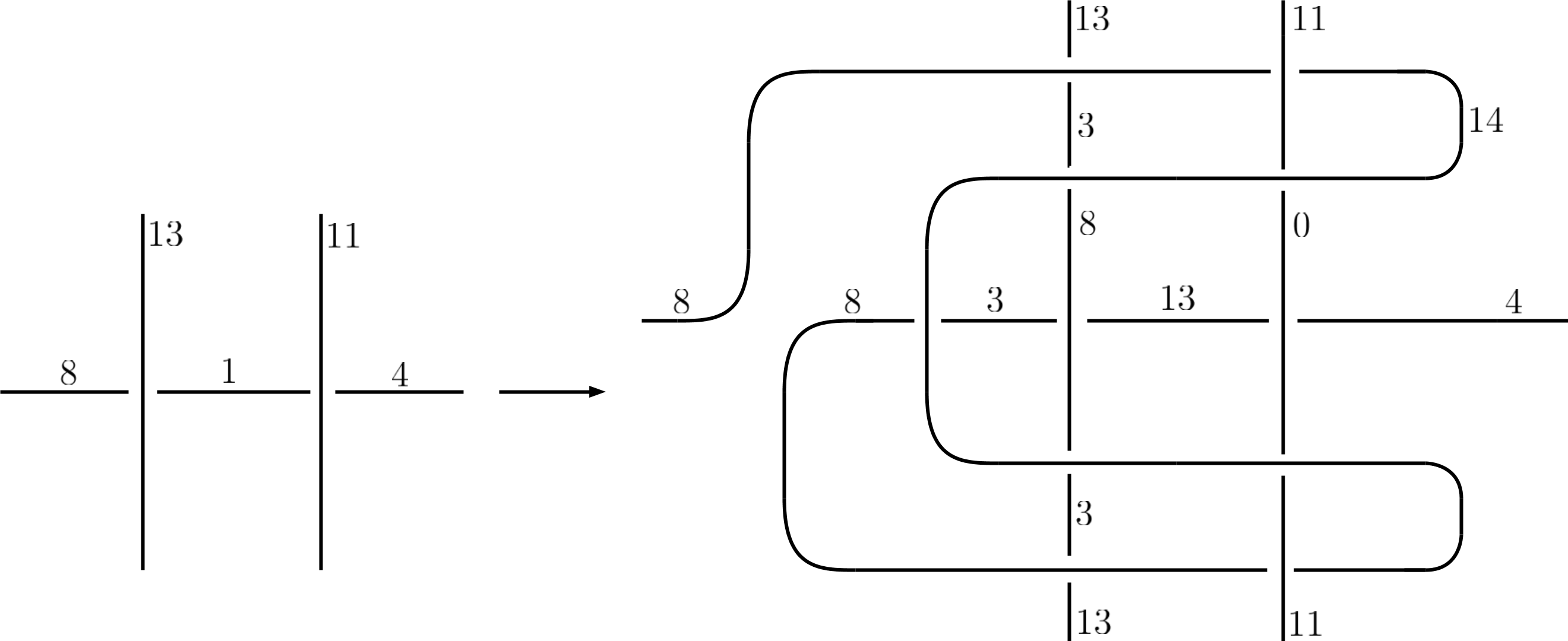} 
	\caption{\label{Fig.98}}
\end{figure}

\begin{figure}[H]
\centering
	\includegraphics[scale=0.101]{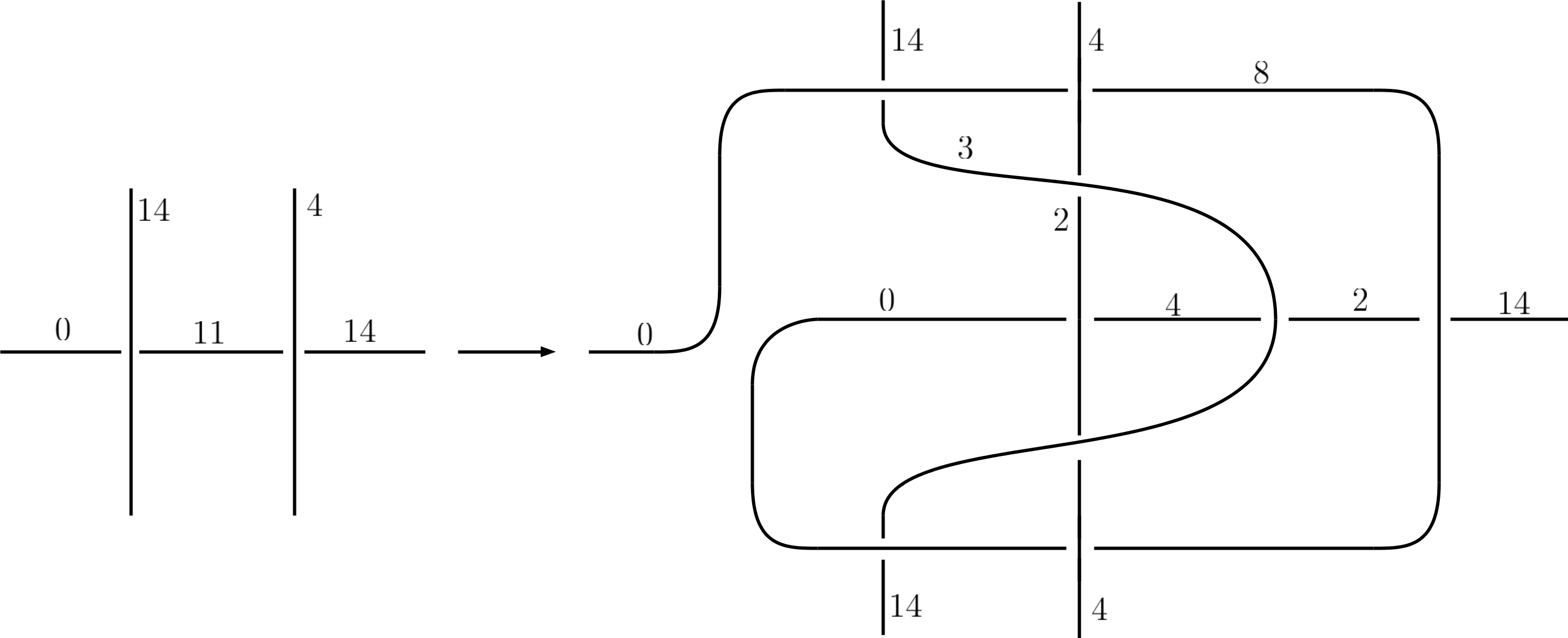} 
	\caption{\label{Fig.99}}
\end{figure}

\begin{figure}[H]
\centering
	\includegraphics[scale=0.12]{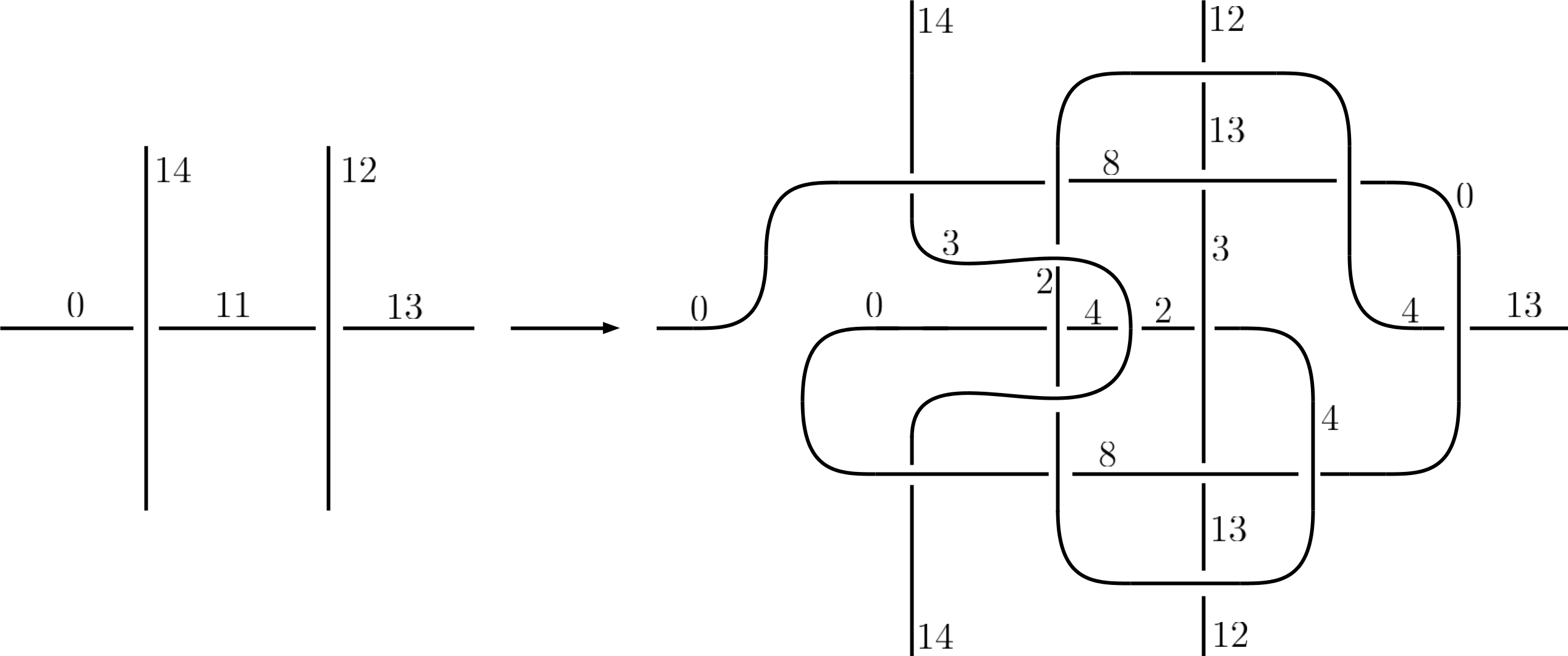} 
	\caption{\label{Fig.100}}
\end{figure}

\begin{figure}[H]
\centering
	\includegraphics[scale=0.12]{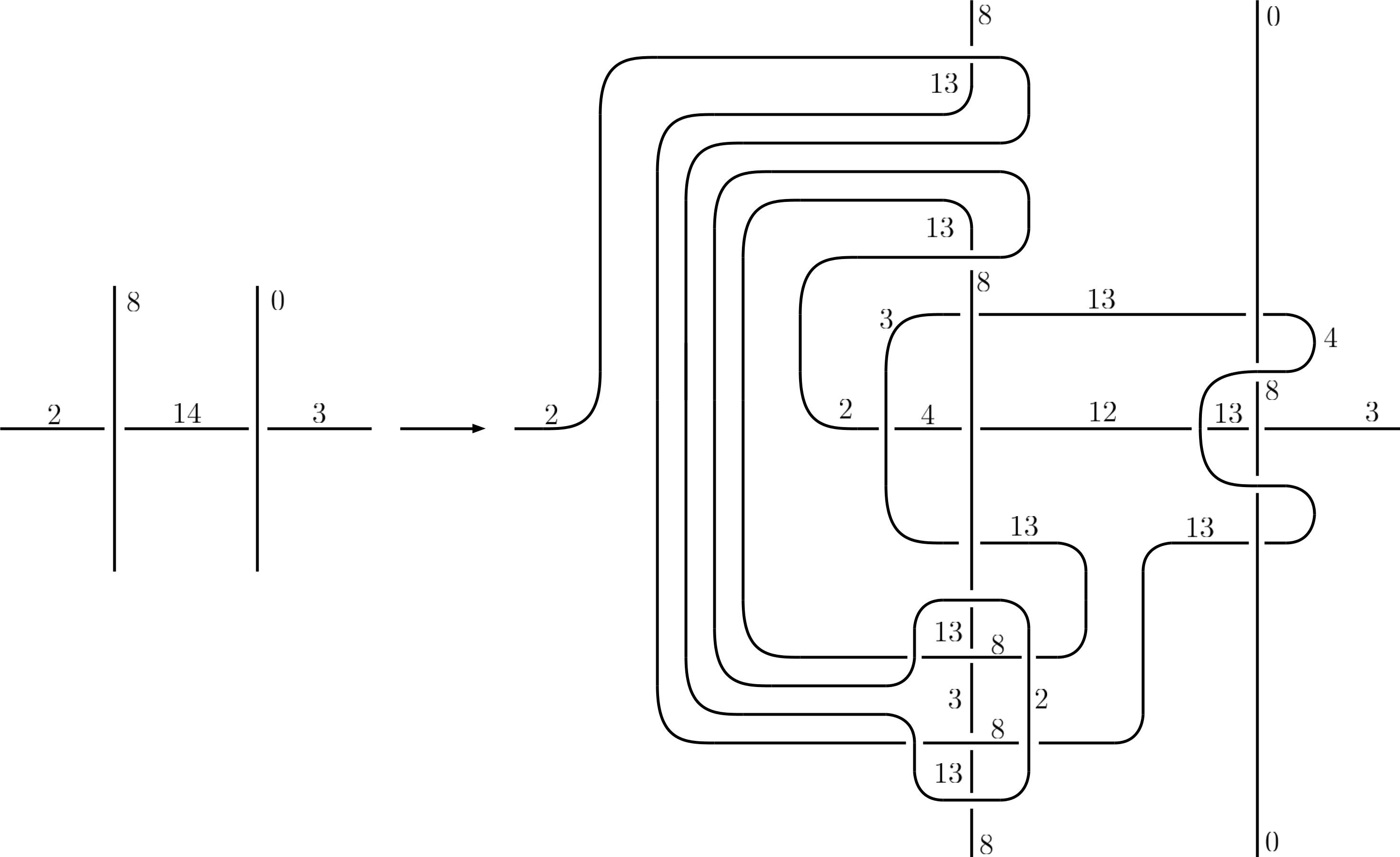} 
	\caption{\label{Fig.101}}
\end{figure}
\begin{figure}[H]
\centering
	  \includegraphics[scale=0.11]{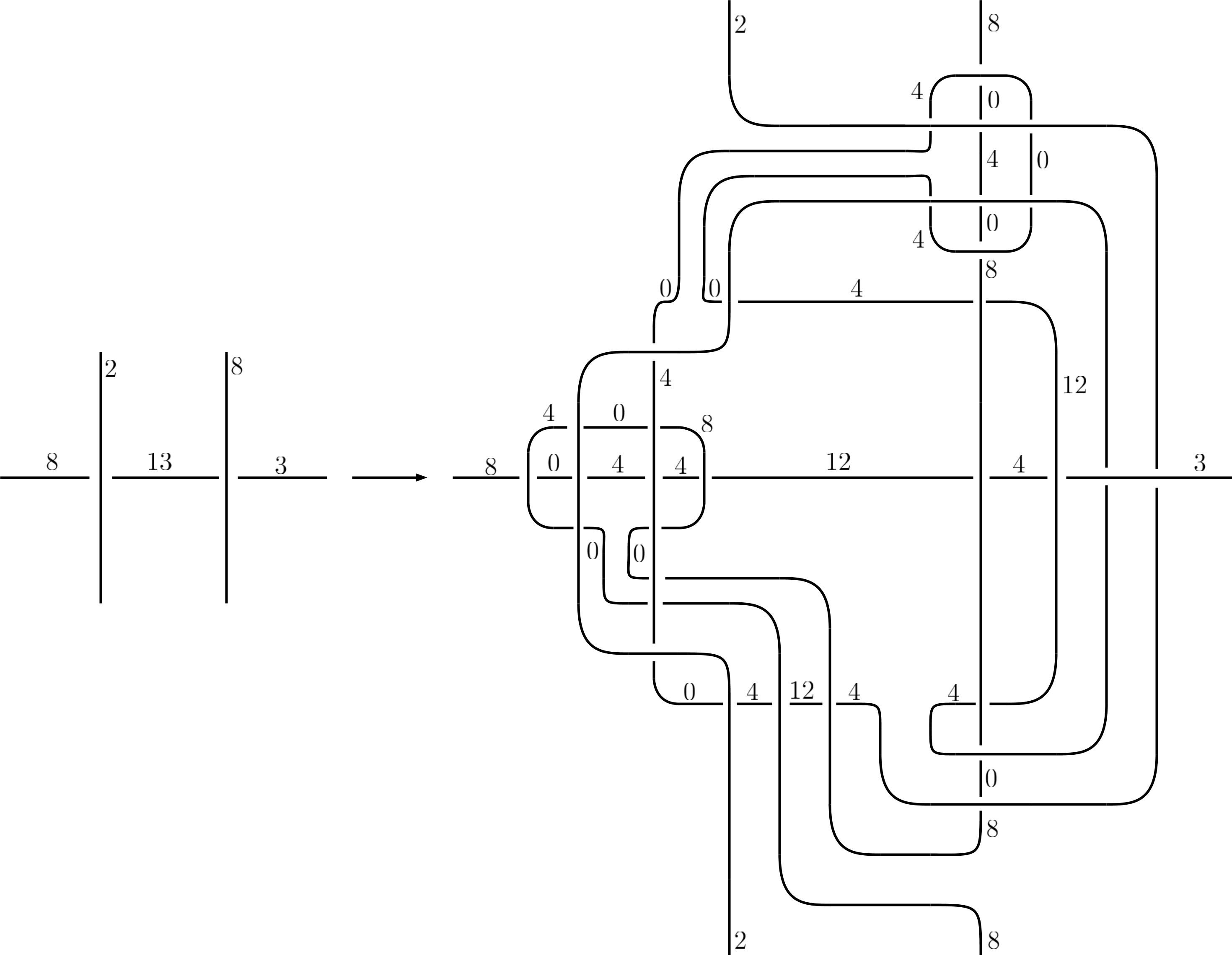}
	\caption{\label{Fig.103}}
\end{figure}
\begin{figure}[H]
\centering
	\includegraphics[scale=0.1]{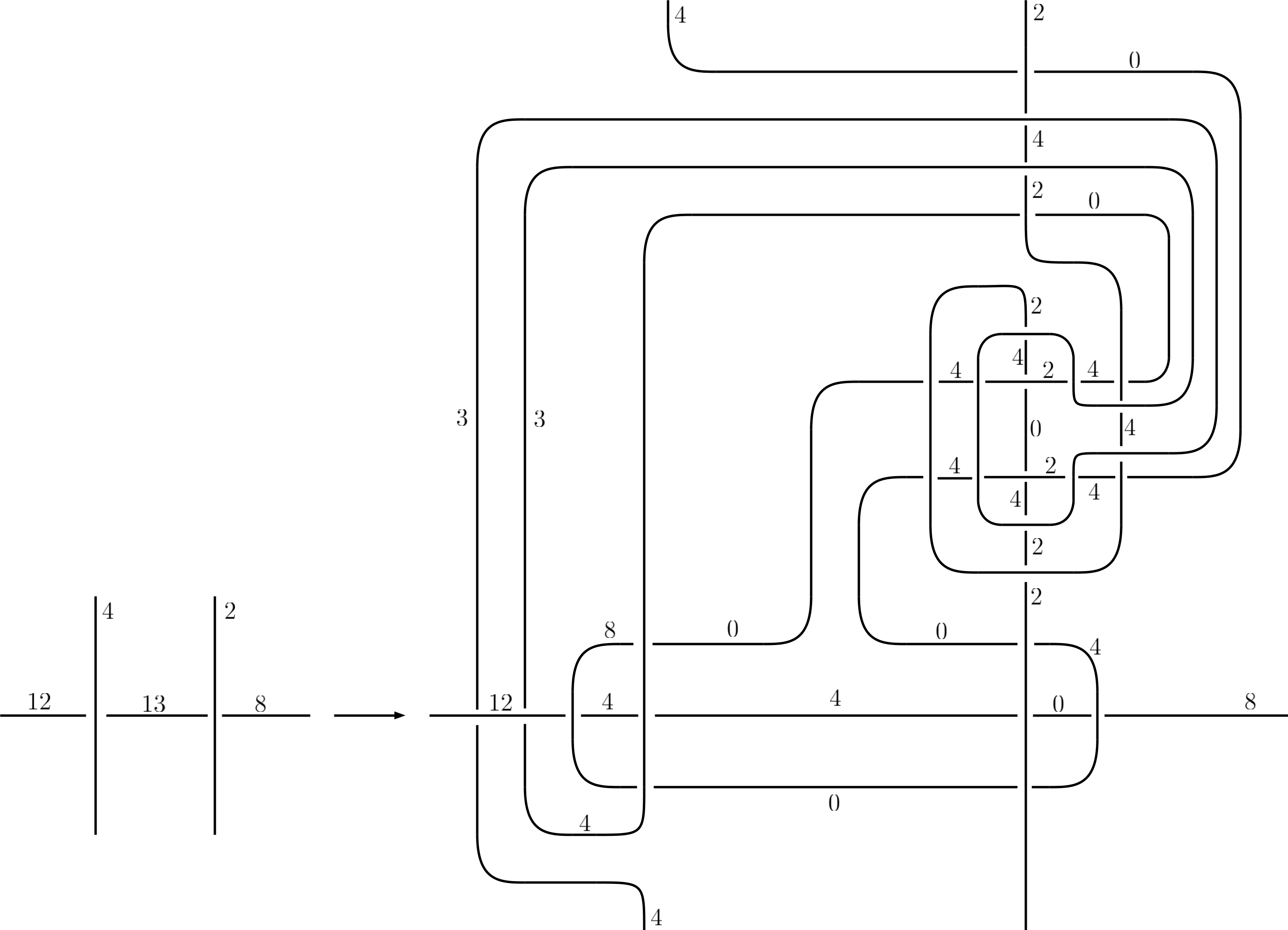}
	\caption{\label{Fig.102}}
\end{figure}
\end{proof}


\begin{thebibliography}{9999999999}
	
\bibitem{ADEM} C. Adams, J. Devadoss, M. Elhamdadi, A. Mashaghi,  
\textit{ Knot theory for proteins: Gauss codes, quandles and bondles}, J. Math. Chem. 58, no 8, (2020)
pp. 1711-1736.
	
\bibitem{bento2017minimum} F. Bento, P. Lopes, \textit{The minimum number of Fox colors modulo 13 is 5}, Topology and its Applications. (2017), pp. 85-115.

\bibitem{elhamdadi2016fox} M. Elhamdadi, J. Kerr, \textit{Fox coloring and the minimum number of colors}, Involve, a Journal of Mathematics. Vol. 10, No. 2 (2016), pp. 291-316.

\bibitem{EN} M. Elhamdadi, S. Nelson, \textit{Quandles---an introduction to the algebra of knots}, Student Mathematical Library. Vol. 74, 2015.

\bibitem{fox1962quick} R. H. Fox, \textit{A quick trip through knot theory}, Topology of 3-manifolds and related topics, Prentice-Hall. (1962).

\bibitem{satoh2013} T. Nakamura, Y. Nakanishi, S. Satoh, \textit{The pallet graph of a Fox coloring}, Yokohama Mathematical Journal. Vol. 59 (2013), pp. 91-97.

\bibitem{satoh2016} T. Nakamura, Y. Nakanishi, S. Satoh, \textit{$11$-colored knot diagram with five colors}, Journal of Knot Theory and its Ramifications. Vol. 25, No. 4 (2016), 22 pp.

\bibitem{oshiro2010any} K. Oshiro, \textit{Any 7-colorable knot can be colored by four colors}, Journal of the Mathematical Society of Japan. Vol. 62, No. 3 (2010), pp. 963-973.

\bibitem{satoh20095} S. Satoh, \textit{5-colored knot diagram with four colors}, Osaka Journal of Mathematics. Vol. 46, No. 4 (2009), pp. 939-948.


\end{thebibliography}
\end{document}